\def\r{\rightarrow}
\newcommand{\fdem}{\hspace*{\fill}~$\Box$\par\endtrivlist\unskip}
\newcommand{\E}{\mathbb{E}}     
\renewcommand{\P}{\mathbb{P}}     
\renewcommand{\L}{\mathbb{L}}
\newcommand{\N}{\mathbb{N}}     
\newcommand{\R}{\mathbb{R}}     
\newcommand{\C}{\mathbb{C}}
\newcommand{\diag}{\mathop{\rm diag}}
\renewcommand{\r}{\mathop{\rightarrow}}
\newcommand{\cB}{\mbox{$\cal B$}}
\newcommand{\cC}{\mbox{$\cal C$}}
\newcommand{\cD}{\mbox{$\cal D$}}
\newcommand{\cE}{\mbox{$\cal E$}}
\newcommand{\cF}{\mbox{$\cal F$}}
\newcommand{\cG}{\mbox{$\cal G$}}
\newcommand{\cH}{\mbox{$\cal H$}}
\newcommand{\cL}{\mbox{$\cal L$}}
\newcommand{\cM}{\mbox{$\cal M$}}
\newcommand{\cR}{\mbox{$\cal R$}}
\newtheorem{theo}{Theorem}
\newtheorem{pro}{Proposition}
\newenvironment{proof}[1]{{\it Proof. }#1}{\fdem}
\newtheorem{lem}{Lemma}
\newtheorem{rem}{Remark}
\newtheorem{cor}{Corollary}
\begin{document}
\begin{center} 
{\large\bf A RENEWAL THEOREM 
FOR STRONGLY ERGODIC MARKOV \\[0.15cm]
CHAINS IN DIMENSION $d\geq3$ AND CENTERED CASE.}  
\end{center}
\vskip 3mm
\begin{center}
D. GUIBOURG and L. HERV\'E \footnote{Universit\'e
Europ\'eenne de Bretagne, I.R.M.A.R. (UMR-CNRS 6625), Institut National des Sciences 
Appliqu\'ees de Rennes. Denis.Guibourg@ens.insa-rennes.fr, Loic.Herve@insa-rennes.fr} 
\end{center}

AMS subject classification : 60J10-60K05-47A55

Keywords : Markov chains, renewal theorem, spectral method. 

\vskip 3mm

\noindent{\bf Abstract.} {\scriptsize \it In dimension $d\geq3$, we present a general assumption under which the renewal theorem established by Spitzer \cite{spitzer} for i.i.d.~sequences of centered nonlattice r.v.~holds true. Next we appeal to an operator-type procedure to investigate the Markov case. Such a spectral approach has been already developed by Babillot  \cite{bab}, but the weak perturbation theorem of \cite{keli} enables us to greatly weaken the moment conditions of \cite{bab}. Our applications concern the $v$-geometrically ergodic Markov chains, the $\rho$-mixing Markov chains, and the iterative  Lipschitz models, for which the renewal theorem of the i.i.d.~case extends under the (almost) expected moment condition. }

\vskip 3mm

\section{Introduction} \label{intro}
Let $(\Omega,\cF,\P)$ be a probability space, let $(E,\cE)$ be a measurable space, and let $(X_n,S_n)_{n\geq0}$ be a  sequence of random variables (r.v.) defined on $\Omega$ and taking values in $E\times\R^d$. We denote by $\cL_d(\cdot)$ the Lebesgue-measure on $\R^d$, by $\langle\cdot,\cdot\rangle$ the canonical scalar product on $\R^d$, by $\|\cdot\|$ the euclidean norm in $\R^d$, and by $B(\R^d)$ the Borel $\sigma$-algebra on $\R^d$, and we use the notation ''$\, ^*\, $'' to denote the matrix transposition. 

{\it Throughout this paper we assume that $d\geq3$}. Given some fixed nonnegative measurable function $f$ on $E$, we are interested in the asymptotic behavior of the renewal-type measures 
$\sum_{n=1}^{+\infty}\E[f(X_n)\, 1_A(S_n-a)]\, $ ($A\in B(\R^d)$) when $a\in\R^d$ goes to infinity. 
Specifically we will consider the centered case in a sense that will be specified later. In the i.i.d.~case, this corresponds to the well-known result established by Spitzer \cite{spitzer}: if $(X_n)_{n\geq1}$ is a  i.i.d.~sequence of centered nonlattice r.v.~such that $\E[\, |X_0|^{m_d}\, ] < \infty$ with $m_d = \max\{d-2,2\}$, and if the covariance $d\times d$-matrix $\Sigma = \E[X_0\, X_0^*]$ is invertible, then the associated random walk $S_n = \sum_{k=1}^{n} X_k$ satisfies the following property when $\|a\|\r+\infty$:  
\begin{equation} \label{ren-intro}
\forall g\in\cC_c(\R^d),\ \ \ \ \sum_{n=1}^{+\infty}\E_\mu\big[\, g(S_n-a)\, \big] \sim \frac{C_d}{\langle \Sigma^{-1}a,a\rangle^{\frac{d-2}{2}}}\, \cL_d(g)
\end{equation}
with 
\begin{equation} \label{C_d} 
C_d = 2^{-1}\, \pi^{-\frac{d}{2}}\, (\det\Sigma)^{-\frac{1}{2}}\, \Gamma\big(\frac{d-2}{2}\big), 
\end{equation}
where $\Gamma(\cdot)$ denotes the usual Gamma-function and $\cC_c(\R^d)$ stands for the space of complex-valued compactly supported continuous functions on $\R^d$. 

In Section~\ref{vers-gene} we present some general assumptions on the characteristic-type functions $\E[f(X_n)\, e^{i\langle t, S_n\rangle}]$ under which the previous renewal theorem  extends. In fact these assumptions are the ``tailor-made'' conditions for proving a multidimensional renewal theorem in the centered case with the help of  the Fourier techniques. This part extends and specifies (and sometimes simplifies) some of the arguments and computations used in  Babillot's paper \cite{bab}.  

The purpose of Section~\ref{markov} is to investigate the general assumptions of Section~\ref{vers-gene} in the case when $(X_n)_{n\geq0}$  is a Markov chain. In this context we show in Subsection~\ref{subsect_semi_group} that a natural way 
is to assume that, for each $t\in\R^d$, the linear operators $Q_n(t)$ defined by $(Q_n(t)f)(x) : = \E_{x}[f(X_n)\, e^{i\langle t, S_n\rangle}]\, $ ($x\in E$) 
is a semi-group: $Q_{m+n}(t) = Q_n(t)\circ Q_m(t)$. 
The Markov random walks 
satisfy this assumption. We then have $\E_x[f(X_n)\, e^{i\langle t, S_n\rangle}] = (Q_1(t)^nf)(x)$, and if $Q_1(0)$ satisfies some ''good'' spectral properties on a Banach space $\cB$, then the operator-perturbation method first introduced by Nagaev \cite{nag1} can be used to obtain an accurate expansion of $Q_1(t)^nf$ for any $f\in\cB$, thus of $\E_x[f(X_n)\, e^{i\langle t, S_n\rangle}]$. This spectral method has been already exploited to investigate the Markov renewal theorems, see e.g.~\cite{bab,guihar,broi,hulo,fuhlai}. In particular in \cite{bab}, Property~(\ref{ren-intro}) is extended to centered Markov random walks in dimension $d\geq 3$. However, because of the use of the standard perturbation theory, the assumptions in \cite{bab} (and in all the previously cited works) need some operator-moment conditions which are restrictive, even not fulfilled in general when $S_1$ is unbounded.  

In this work, we use the weak Nagaev method introduced in \cite{aap,ihp1,bal-seb,gui-lepage}, which allows to greatly weaken the moment conditions in limit theorems for strongly ergodic Markov chains and dynamical systems. This new approach, in which the Keller-Liverani perturbation theorem \cite{keli,liverani,gouliv} plays a central role, is fully described in \cite{fl} and applied to prove some usual refinements of the central limit theorem (CLT). It is used in \cite{denis} to establish a one-dimensional (non-centered) Markov renewal theorem. Also mention that one of the more beautiful applications of this method concerns the convergence to stable laws, see \cite{gui-lepage,BDG,seb-08}. 

This new approach is outlined in Subsection~\ref{nag_method} and applied in Section~\ref{applic} to the three following classical strongly ergodic Markov models:  \\[0.15cm] {\it 
\indent - The $v$-geometrically ergodic Markov chains \cite{mey}, \\[0.1cm]
\indent - The $\rho$-mixing Markov chains \cite{rosen}, \\[0.1cm]
\indent - The iterative Lipschitz models \cite{duf}. }\\[0.15cm]
To have a good understanding of our improvements in term of moment conditions, let us consider an unbounded function $v : E\r[1,+\infty)$ and a $v$-geometrically ergodic Markov chain $(X_n)_{n\geq0}$, that is we have: $\sup_{|f|\leq v}\sup_{x\in E}|Q^nf(x)-\pi(f)|/v(x) = O(\kappa^n)$ for some $\kappa\in(0,1)$, 
where $Q$ denotes the transition probability of the chain and $\pi$ is the stationary distribution. Besides, let us consider the 
Markov random walk $S_n = \sum_{k=1}^{n} \xi(X_k)$ in $\R^d$ associated to some measurable function $\xi : E\r\R^d$.  
The centered case corresponds to the assumption $\pi(\xi) = 0$, the (asymptotic) covariance matrix is given by 
$\Sigma \ : = \lim_n \E[S_nS_n^*]/n$, 
provided that this limit exists, and finally the (Markov) nonlattice condition means that there exist no $b\in\R^d$, no closed subgroup $H$ in $\R^d$, $H\neq \R^d$, no $\pi$-full $Q$-absorbing set $A\in\cE$, and finally no bounded measurable function $\theta\, :\, E\r\R^d$ such that:  
$\forall x\in A, \ \ \xi(y)+\theta(y)-\theta(x)\in b+H\ \ Q(x,dy)-$p.s. We shall prove in Subsection~\ref{sub_sec_v_geo} the next statement: 

\noindent {\it Let us assume that $(X_n)_{n\geq0}$ is a $v$-geometrically ergodic Markov chain, that $\xi : E\r\R^d$ ($d\geq 3$) is centered, nonlattice, and such that $\|\xi\|^{m_d+\delta_0}\leq C\, v$ with $m_d = \max\{d-2,2\}$ and some constants $C, \delta_0>0$, and finally that the initial distribution $\mu$ of the chain is such that $\mu(v)<\infty$. Then the above asymptotic covariance matrix $\Sigma$ is well-defined and positive definite, and $S_n = \sum_{k=1}^{n} \xi(X_k)$ satisfies (\ref{ren-intro}). } 

\noindent In comparison with the optimal order $m_d$ of the i.i.d.~case, the condition $\|\xi\|^{m_d+\delta_0}\leq C\, v$ is the expected moment assumption (up to $\delta_0>0$) in the context of $v$-geometrically ergodic Markov chains. 
The corresponding result in \cite{bab} requires the following assumption: $\sup_{x\in E} \frac{1}{v(x)}\int_E |\xi(y)|^{m_d+\delta_0}\, v(y)\, Q(x,dy) < \infty$, see also \cite{fuhlai}. This moment condition is not only clearly stronger than the previous one, but actually it is not fulfilled in general when $\xi$ is unbounded. A typical example is presented in \cite[$\S$~3]{fl} for the usual linear autoregressive model and $\xi(x) = x$.  Similar improvements concerning the moment conditions are obtained in Subsections~\ref{sub_sec_L2} and \ref{sub_sec_ite} for the two others (above cited) Markov models. 

\noindent Throughout this paper we shall use the following definitions. Let ${\cal O}$ be an open subset of $\R^d$, let $(X,\|\cdot\|_X)$ be a normed vector space, and let $\tau\in[0,1)$. We say that a function $V : {\cal O}\r X$ is uniformly $\tau$-holderian if there exists a constant $C\geq0$ such that we have: 
$$\forall (t,t')\in {\cal O}^2,\ \|V(t)-V(t')\|_X \leq C\, \|t-t'\|^\tau.$$
If $m\in\R_+$, we shall say that $U\in\cC_b^m({\cal O},X)$ if $U$ is a function from ${\cal O}$ to $X$ satisfying the following properties~:  \\[0.12cm]
\indent $U$ is $\lfloor m\rfloor$-times continuously differentiable on ${\cal O}$ \\[0.12cm] 
\indent Each partial derivative of order $j=0,\ldots,\lfloor m\rfloor$ of $U$ is bounded on ${\cal O}$ \\[0.12cm]
\indent Each partial derivative of order $\lfloor m\rfloor$ of $U$ is uniformly $(m-\lfloor m\rfloor)$-hölder on ${\cal O}$.  
\section{A general statement} \label{vers-gene}
In this section, we state a general result that will be applied later to the Markov context, but which has its own interest in view of other possible applications. Here $(X_n,S_n)_{n\geq1}$ is a general sequence of random variables defined on $\Omega$ and taking values in $E\times\R^d$. 

\noindent For any $R>0$, we set $B(0,R) := \{t\in\R^d : \|t\| < R\}$, and for any $0<r<r'$, we set $K_{r,r'} = \{t\in\R^d : r < \|t\| < r'\}$. We denote by $\nabla$ and $Hess$ the gradient and  the Hessian matrix, respectively. 

\noindent{\bf Hypothesis $\cR(m)$.}  {\it Given a real number $m>0$ and a measurable function $f : E\r[0,+\infty)$ such that $\E[f(X_n)] < \infty$ for all $n\geq 1$, 
we will say that Hypothesis $\cR(m)$ holds if: \\[0.12cm]
(i) There exists $R>0$ such that we have for all $t\in B(0,R)$ and all $n\geq 1$: 
\begin{equation} \label{X_n-S-n}
\E\big[f(X_n)\, e^{i\langle t, S_n\rangle}\big] = \lambda(t)^n\, L(t) + R_n(t), 
\end{equation}
where $\lambda(0)=1$, the functions $\lambda(\cdot)$ and $L(\cdot)$ are in $C_b^m\big(B(0,R),\C\big)$, and the series $\sum_{n\geq1}R_n(\cdot)$ uniformly converges on $B(0,R)$ and defines a function in $ C_b^m\big(B(0,R),\C\big)$. \\[0.12cm]
(ii) For all $0<r<r'$, the series $\sum_{n\geq1}\E\big[f(X_n)\, e^{i\langle \cdot, S_n\rangle}\big]$ uniformly converges on $K_{r,r'}$ and  defines a function in $ C_b^m\big(K_{r,r'},\C\big)$. } 

\noindent If Hypothesis $\cR(m)$ holds with $m\geq 2$, we define the following  symmetric matrix: 
$$\Sigma : = - Hess\, \lambda(0).$$ 
\noindent  In the sequel, $\vec m := \frac{\nabla\lambda(0)}{i}$ will be viewed as an asymptotic first moment vector in $\R^d$, and we shall assume that $\vec m = 0$. Under this centered assumption, $\Sigma= - Hess\, \lambda(0)$ may be viewed as an asymptotic covariance matrix. These facts are specified in the following proposition, in which we use the notation $F^{(\ell)}$ for the derivative of order $\ell$ of a complex-valued function $F$.   
\begin{pro} \label{deri-hess} $\ $ \\[0.1cm]
(i) If Condition~$\cR(1)$-(i) is fulfilled with $f=1_E$, $L(0)=1$, $\sup_{n\geq1}|R_n^{(1)}(0)| < \infty$, and if $\forall n\geq1,\ \E[\|S_n\|] < \infty$, then we have: $\displaystyle\frac{\nabla\lambda(0)}{i} = \lim_n\frac{1}{n}\, \E[S_n]$.  \\[0.15cm]
(ii) If Condition~$\cR(2)$-(i) is fulfilled with $f=1_E$, $L(0)=1$, $\sup_{n\geq1}|R_n^{(2)}(0)| < \infty$, and if in addition we have $\nabla\lambda(0) = 0$ and $\forall n\geq1,\ \E[\|S_n\|^2] < \infty$, then we have: $\displaystyle \Sigma = \lim_n\frac{1}{n}\, \E[S_nS_n^*]$. 
\end{pro}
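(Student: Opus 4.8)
The plan is to exploit the fact that, when $f = 1_E$, the left-hand side of (\ref{X_n-S-n}) is exactly the characteristic function $\phi_n(t) := \E[e^{i\langle t, S_n\rangle}]$ of $S_n$, and to compare its low-order Taylor coefficients at $t=0$ computed in two ways: from the probabilistic side via the moment hypotheses, and from the analytic side via the expansion $\phi_n = \lambda^n L + R_n$. Both sides of (\ref{X_n-S-n}) are genuine functions on $B(0,R)$, so their derivatives at $0$ must agree, and matching the coefficients of $n$ will isolate $\nabla\lambda(0)$ and $Hess\,\lambda(0)$.

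For part (i), I would first observe that $\E[\|S_n\|] < \infty$ permits differentiation under the expectation, giving $\nabla\phi_n(0) = i\,\E[S_n]$. On the other side, since $\lambda$ and $L$ lie in $C_b^1\big(B(0,R),\C\big)$ the product $\lambda^n L$ is differentiable, and hence $R_n = \phi_n - \lambda^n L$ is differentiable at $0$ as well (so that $R_n^{(1)}(0)$ is well-defined). Applying the product and chain rules and inserting $\lambda(0)=1$, $L(0)=1$ yields $\nabla[\lambda^n L](0) = n\,\nabla\lambda(0) + \nabla L(0)$. Equating the two computations of $\nabla\phi_n(0)$ gives $i\,\E[S_n] = n\,\nabla\lambda(0) + \nabla L(0) + R_n^{(1)}(0)$. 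Dividing by $n$ and letting $n\to\infty$, the term $\nabla L(0)/n$ vanishes and, by the hypothesis $\sup_{n\geq1}|R_n^{(1)}(0)| < \infty$, so does $R_n^{(1)}(0)/n$, leaving $\lim_n \E[S_n]/n = \nabla\lambda(0)/i$, as claimed.

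For part (ii), I would proceed identically at second order. The moment condition $\E[\|S_n\|^2] < \infty$ justifies differentiating twice under the expectation, so that $Hess\,\phi_n(0) = -\E[S_n S_n^*]$. Differentiating $\lambda^n L$ twice and evaluating at $0$, the crucial simplification comes from the centering hypothesis $\nabla\lambda(0)=0$: every term of the expanded Hessian that carries a first derivative of $\lambda$ drops out, and together with $\lambda(0)=L(0)=1$ one is left with $Hess[\lambda^n L](0) = n\,Hess\,\lambda(0) + Hess\,L(0)$. Recalling $\Sigma = -Hess\,\lambda(0)$ and equating gives $-\E[S_n S_n^*] = -n\,\Sigma + Hess\,L(0) + R_n^{(2)}(0)$; dividing by $-n$ and using $\sup_{n\geq1}|R_n^{(2)}(0)| < \infty$ yields $\lim_n \E[S_n S_n^*]/n = \Sigma$.

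The computation is elementary; the only points requiring genuine care are the two analytic justifications. The first is the legitimacy of differentiating the characteristic function under the integral sign, which is exactly what the moment hypotheses $\E[\|S_n\|]<\infty$ and $\E[\|S_n\|^2]<\infty$ guarantee, and which also ensures that $R_n$ is smooth enough for $R_n^{(1)}(0)$, resp. $R_n^{(2)}(0)$, to be meaningful. The second, and the only place where a uniformity assumption is truly needed, is the passage to the limit in $n$: it is precisely the bounds $\sup_n|R_n^{(\ell)}(0)|<\infty$ that force the remainder contribution $R_n^{(\ell)}(0)/n$ to zero after normalization, so that the $O(1)$ contributions of $L$ and of the remainder both become negligible against the linear-in-$n$ principal term.
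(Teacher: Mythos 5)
Your proof is correct and follows essentially the same route as the paper: differentiate the identity $\E[e^{i\langle t,S_n\rangle}]=\lambda(t)^nL(t)+R_n(t)$ once (resp.\ twice) at $t=0$, use $\lambda(0)=L(0)=1$ (and $\nabla\lambda(0)=0$ for the second order) to isolate the coefficient of $n$, then divide by $n$ and invoke $\sup_n|R_n^{(\ell)}(0)|<\infty$. The paper's proof is just a terser version of the same computation (carried out for $d=1$), and your added remarks on differentiating under the expectation and on why $R_n^{(\ell)}(0)$ is well-defined are welcome but not a different argument.
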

\begin{proof} To simplify the computations, let us assume $d=1$. The extension to $d\geq2$ is obvious by using partial derivatives. By deriving  $\E[e^{itS_n}] = \lambda(t)^n\, L(t) + R_n(t)$ at $t=0$, we obtain: $i\, \E[S_n] = n\, \lambda^{(1)}(0) + L^{(1)}(0) + R_n^{(1)}(0)$. Hence Assertion~(i).  
Next by deriving twice the same equality  at $t=0$, one gets: $- \, \E[S_n^2] = n\, \lambda^{(2)}(0) + L^{(2)}(0) + R_n^{(2)}(0)$. Hence Assertion~(ii). \end{proof}
Notice that the assumption $\sup_{n\geq1}|R_n^{(\ell)}(0)| < \infty$ used in Proposition~\ref{deri-hess} is natural in view of the desired regularity properties of $\sum_{n\geq1}R_n(\cdot)$ in Hypothesis~$\cR(m)$-(i). 

\noindent For $k\in\N^*$, we denote by $\cH_k$ the space of all complex-valued continuous Lebesgue-integrable functions on $\R^d$, whose Fourier transform is compactly supported and $k$-times continuously differentiable on $\R^d$. 
The constant $C_d$ is defined in (\ref{C_d}).  
\begin{theo} \label{ren-theo} 
Assume that $d\geq 3$, that Hypothesis~$\cR(m)$ holds with $m=\max\{d-2,2+\varepsilon\}$ for some $\varepsilon>0$, that  $\nabla\lambda(0) = 0$, and that $\Sigma$ is positive definite. Then we have for all $g\in\cH_{d-2}$: 
\begin{equation} \label{ren-rate-theo}
\sum_{n=1}^{+\infty}\E\big[\, f(X_n)\, g(S_n-a)\, \big] \sim 
\frac{C_d\, L(0)}{\langle \Sigma^{-1}a,a\rangle^{\frac{d-2}{2}}}\, \cL_d(g)\ \ \mbox{when}\ \  \|a\|\r+\infty\ \ (a\in\R^d).
\end{equation}
\end{theo}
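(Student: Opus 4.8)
The plan is to work on the Fourier side. The sum I want to estimate is
\[
\sum_{n=1}^{+\infty}\E\big[\, f(X_n)\, g(S_n-a)\, \big].
\]
Since $g\in\cH_{d-2}$ has compactly supported Fourier transform $\widehat{g}$, I can write $g(S_n-a)$ via Fourier inversion and interchange the sum over $n$ with the integral over $t$. This yields, up to normalizing constants,
\[
\sum_{n=1}^{+\infty}\E\big[\, f(X_n)\, g(S_n-a)\, \big]
= c_d\int_{\R^d} \widehat{g}(t)\, e^{-i\langle t,a\rangle}\,
\Big(\sum_{n=1}^{+\infty}\E\big[f(X_n)\, e^{i\langle t,S_n\rangle}\big]\Big)\, dt,
\]
with an appropriate sign/convention in the exponent. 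The interchange is justified by the uniform convergence of the series in Hypothesis $\cR(m)$, parts (i) and (ii), on a neighborhood of $0$ and away from $0$ respectively, and by the compact support of $\widehat{g}$. So the whole analysis reduces to understanding the inner sum $\Phi(t):=\sum_{n\geq1}\E[f(X_n)e^{i\langle t,S_n\rangle}]$ as a function of $t$, and then extracting the asymptotics of its (oscillatory) Fourier integral against $e^{-i\langle t,a\rangle}$ as $\|a\|\r+\infty$.

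Next I would split the $t$-integral into the region $\|t\|<R$ near the origin, where Hypothesis $\cR(m)$-(i) applies, and the region $\|t\|\geq R$, where $\cR(m)$-(ii) applies. On $\{\|t\|\geq R\}\cap\supp\widehat{g}$ the inner sum $\Phi$ is a genuine $C_b^m$ function with no singularity, so its contribution is the Fourier transform of a $C^{d-2}$ compactly supported function; integrating by parts $d-2$ times shows this contribution decays faster than $\|a\|^{-(d-2)}$, hence is negligible compared with the claimed main term of order $\|a\|^{-(d-2)}$. The entire main term therefore comes from the neighborhood of the origin. There I substitute the expansion \eqref{X_n-S-n} and sum the geometric-type series: since $\lambda(0)=1$, $\nabla\lambda(0)=0$, and $Hess\,\lambda(0)=-\Sigma$, a second-order Taylor expansion gives $\lambda(t)\approx 1-\tfrac12\langle\Sigma t,t\rangle$, so
\[
\sum_{n\geq1}\lambda(t)^n\, L(t)=\frac{\lambda(t)}{1-\lambda(t)}\, L(t)
\sim\frac{2\,L(0)}{\langle\Sigma t,t\rangle}\qquad(t\r0),
\]
while the remainder part $\sum_{n\geq1}R_n(t)$ is, by $\cR(m)$-(i), a bounded $C_b^m$ function near $0$ and contributes only lower-order terms after the same integration-by-parts argument. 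The dominant behavior of $\Phi$ near the origin is thus the nonintegrable singularity $2L(0)\,\langle\Sigma t,t\rangle^{-1}$.

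It remains to compute the asymptotics of the oscillatory integral of this singular kernel. After the linear change of variables diagonalizing $\Sigma$ (using that $\Sigma$ is positive definite, so $\langle\Sigma t,t\rangle$ is a genuine nondegenerate quadratic form), the problem is reduced to the classical fact that, in dimension $d\geq3$, the Fourier transform of $\|t\|^{-2}$ (suitably regularized near $0$, with the help of a smooth cutoff supplied by $\widehat g$ and a partition splitting off the high-frequency tail) behaves like a constant times $\|a\|^{-(d-2)}$; the precise constant involves $\Gamma\big(\tfrac{d-2}{2}\big)$, $\pi^{-d/2}$, and $(\det\Sigma)^{-1/2}$, reassembling exactly into $C_d\,L(0)\,\langle\Sigma^{-1}a,a\rangle^{-(d-2)/2}$ after undoing the change of variables. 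The factor $\cL_d(g)=\widehat g(0)\cdot(\text{const})$ enters because only the value of $\widehat g$ at the origin survives in the leading term, the variation of $\widehat g$ away from $0$ being absorbed into lower-order corrections. The hypothesis $m=\max\{d-2,2+\varepsilon\}$ is precisely what guarantees enough smoothness of $\lambda,L,\sum R_n$ to make these integration-by-parts estimates valid and the error terms strictly smaller than $\|a\|^{-(d-2)}$.

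I expect the main obstacle to be the careful separation of the leading singularity from the genuinely lower-order terms near $t=0$: one must show that replacing $\lambda(t)/(1-\lambda(t))$ by its leading singular part $2\langle\Sigma t,t\rangle^{-1}$, replacing $L(t)$ by $L(0)$, and discarding $\sum R_n(t)$, all produce errors whose Fourier integrals decay faster than $\|a\|^{-(d-2)}$. This is where the regularity quantified by $C_b^m$ and the exact value of $m$ are used in full, via repeated integration by parts (or equivalently decay estimates for Fourier transforms of functions with controlled derivatives and a controlled singularity), and where the bulk of the technical computation — the part I would defer to lemmas rather than grind through here — actually lies.
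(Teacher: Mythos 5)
Your proposal follows essentially the same route as the paper: Fourier inversion exploiting the compact support of $\widehat g$, a splitting of the $t$-integral into a neighborhood of the origin (where $\cR(m)$-(i) gives $\frac{\lambda(t)}{1-\lambda(t)}L(t)$ plus the remainder $\sum_n R_n$) and the complementary annulus (where $\cR(m)$-(ii) applies), identification of the leading singularity $2L(0)\langle\Sigma t,t\rangle^{-1}$, diagonalization of $\Sigma$ and the classical Fourier transform of $\|t\|^{-2}$ for the main term, and derivative/integration-by-parts estimates for the error terms --- which is exactly the paper's decomposition into $I_1,I_2,I_3,J,K$ and its Appendix A. The technical content you defer to lemmas (the bounds $|u^{(i)}(t)|=O(\|t\|^{-(1+i)})$ for $u=\theta_1/(1-\lambda)$ and the analogous, $\varepsilon$-dependent, bounds for $\theta_3/((1-\lambda)\langle\Sigma t,t\rangle)$ when $d=3,4$) is precisely where the paper also places its work, so the outline is correct and complete at the level it is written.
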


\noindent It is well-known (see e.g.~\cite{bre}) that the conclusion of Theorem~\ref{ren-theo} implies that, for all $a\in\R^d$, $U_a(B) := \sum_{n=1}^{+\infty}\E[\, f(X_n)\, 1_B(S_n-a)\, ]\, $ ($B\in B(\R^d)$) 
defines a positive Radon measure on $\R^d$, and that $\langle \Sigma^{-1}a,a\rangle^{d-2/2}\, U_a$ weakly converges to $C_d\, L(0)\, \cL_d(\cdot)$ when $\|a\|\r+\infty$. Consequently: 
\begin{cor} \label{ren-coro}
Under the assumptions  of Theorem~\ref{ren-theo}, the property (\ref{ren-rate-theo}) is fulfilled with any real-valued  continuous compactly supported function $g$ on $\R^d$, and also with $g=1_B$ for any bounded Borel set $B$ in $\R^d$ 
whose boundary has a zero Lebesgue-measure. 
\end{cor}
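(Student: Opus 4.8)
The plan is to deduce everything from the reformulation stated just above the corollary: the rescaled renewal measures $\nu_a := \langle \Sigma^{-1}a,a\rangle^{\frac{d-2}{2}}\, U_a$ are positive Radon measures on $\R^d$, and Theorem~\ref{ren-theo} says precisely that $\nu_a(g)\to\nu(g)$ for every $g\in\cH_{d-2}$, where $\nu := C_d\, L(0)\, \cL_d$. Since $\cH_{d-2}$ and $\cC_c(\R^d)$ are genuinely different classes (a nonzero band-limited function cannot be compactly supported), the first task is to upgrade the convergence from $\cH_{d-2}$ to $\cC_c(\R^d)$, i.e.\ to vague convergence of the positive measures $\nu_a$ towards $\nu$. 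Once this is done, both assertions follow from the portmanteau theorem for vague convergence of Radon measures recalled in \cite{bre}.

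First I would establish a uniform local mass bound: for each compact $K\subset\R^d$ there is a nonnegative $\psi\in\cH_{d-2}$ with $\psi\ge 1_K$, obtained by smoothing $1_{K'}$ (with $K'\supset K$) by a nonnegative band-limited approximate identity $\chi_\delta$ of Fej\'er type, so that $\psi$ is band-limited and its Fourier transform is smooth and compactly supported. Positivity of $\nu_a$ then gives $\nu_a(K)\le\nu_a(\psi)\to\nu(\psi)<\infty$, whence $\sup_a\nu_a(K)<\infty$. Next, for a nonnegative $g\in\cC_c(\R^d)$ and every $\varepsilon>0$ I would build an upper approximant $w\in\cH_{d-2}$ with $w\ge g$ and $\cL_d(w)\le\cL_d(g)+\varepsilon$, taking $w=g*\chi_\delta+b$, where $b$ is a nonnegative band-limited bump that dominates $\|g-g*\chi_\delta\|_\infty$ on a neighbourhood of $\supp g$ and has small $L^1$-norm. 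Then $\nu_a(g)\le\nu_a(w)\to\nu(w)=C_d\,L(0)\,\cL_d(w)\le\nu(g)+C_d\,L(0)\,\varepsilon$, so letting $\varepsilon\to0$ yields $\limsup_a\nu_a(g)\le\nu(g)$. A symmetric construction of a lower approximant $u\in\cH_{d-2}$ with $u\le g$ and $\cL_d(u)\ge\cL_d(g)-\varepsilon$ gives $\liminf_a\nu_a(g)\ge\nu(g)$; here the uniform local mass bound is what absorbs the band-limited tails. Splitting a general real $g$ into $g^+-g^-$ then gives $\nu_a(g)\to\nu(g)$ for all real $g\in\cC_c(\R^d)$, which is the first assertion.

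For the second assertion I would invoke the standard portmanteau equivalence: for positive Radon measures, vague convergence $\nu_a\to\nu$ is equivalent to $\limsup_a\nu_a(F)\le\nu(F)$ for all compact $F$ together with $\liminf_a\nu_a(G)\ge\nu(G)$ for all relatively compact open $G$, and it forces $\nu_a(B)\to\nu(B)$ for every bounded Borel set $B$ with $\nu(\partial B)=0$. Because the limit $\nu=C_d\,L(0)\,\cL_d$ is a multiple of Lebesgue measure, one has $\nu(\partial B)=C_d\,L(0)\,\cL_d(\partial B)=0$ exactly under the hypothesis $\cL_d(\partial B)=0$, while boundedness of $B$ keeps us within local finiteness. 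Hence $\nu_a(1_B)=\nu_a(B)\to\nu(B)=C_d\,L(0)\,\cL_d(B)$, which is (\ref{ren-rate-theo}) for $g=1_B$.

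The main obstacle is precisely the upgrade from $\cH_{d-2}$ to $\cC_c(\R^d)$: since the available test functions are band-limited and therefore spread over all of $\R^d$, the naive sandwiching of a compactly supported $g$ between two $\cH_{d-2}$ functions is delicate, the difficulty being to make the pointwise inequalities $u\le g\le w$ hold \emph{everywhere}, including in the tails, while pinching $\cL_d(u)$ and $\cL_d(w)$ down to $\cL_d(g)$. The uniform local mass bound $\sup_a\nu_a(K)<\infty$ is exactly what lets the tail contributions be controlled uniformly in $a$, and is the technical heart of the argument; everything after it is the routine portmanteau machinery for positive Radon measures.
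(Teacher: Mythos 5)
Your argument is correct and is essentially the paper's own: the paper disposes of this corollary by citing \cite{bre} for exactly the standard fact you reconstruct, namely that convergence of the positive measures $\langle \Sigma^{-1}a,a\rangle^{\frac{d-2}{2}}U_a$ tested against the band-limited class $\cH_{d-2}$, together with positivity and the resulting uniform local mass bounds, upgrades to vague convergence towards $C_d\,L(0)\,\cL_d$, after which the portmanteau theorem handles both continuous compactly supported $g$ and indicators of bounded Borel sets with Lebesgue-null boundary. You have merely written out the details the paper leaves to the reference (the only delicate point, which you correctly flag, being the construction of nonnegative band-limited minorants/majorants with controlled $L^1$ pinching).
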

The proof of Theorem~\ref{ren-theo} is based on some Fourier techniques partially derived from \cite{bab}. In the case $d=3$ or $4$, the optimal order 2 of the i.i.d.~case (see Section~\ref{intro}) is here replaced with the order $2+\varepsilon$. This will be needed in the proof of Lemma~\ref{I1(a)-I3(a)} below. 

\noindent {\it Proof of Theorem~\ref{ren-theo}.} 
Set $E_n(t) := \E[f(X_n)\, e^{i\langle t, S_n\rangle}]$ for each $t\in\R^d$. Let $h\in\cH_{d-2}$, and let $b>0$ be such that $\hat h(t)=0$ if 
$\|t\|>b$. The inverse Fourier formula on $h$ easily gives \\[0.15cm]
\indent $\displaystyle \ \ \ \ \ \ \ \ \ \ \ \ \ \ \forall n\in\N^*, \ \ 
(2\pi)^d\, \E[f(X_n)\, h(S_n)] = \int_{\|t\|\leq b} \hat h(t)\,E_n(t)\, dt$. \\[0.15cm]
Since $\lambda(t) = 1 -\frac{1}{2}\langle \Sigma t,t\rangle + o(\|t\|^2)$, one can choose $\alpha< R$ such that  
\begin{equation} \label{lambda(t)-inegalite}
\|t\| \leq \alpha\ \Rightarrow\ |\lambda(t)| \leq 1 -\frac{1}{4}\langle \Sigma t,t\rangle.
\end{equation}
Now let $r\in(0,\alpha)$, and let $\chi$ be a real-valued compactly supported and indefinitely differentiable function on $\R^d$, such that its support is contained in $\{t\in\R^d : \|t\|\leq \alpha\}$ and $\chi(t)=1$ for $\|t\|\leq r$. Let us write \\[0.15cm]
\noindent $\displaystyle (2\pi)^d\, \E\big[f(X_n)\, h(S_n)\big] = \int_{\|t\| \leq \alpha} \chi(t)\hat h(t)\, E_n(t)\, dt + 
\int_{r<\|t\| \leq b} (1-\chi(t))\, \hat h(t)\, E_n(t)\, dt$. \\[0.15cm]
Let $a\in\R^d$ and set $h_a(\cdot) := h(\cdot -a)$. We have $\widehat{h_a}(t) = \hat h(t)\, e^{-i\langle t,a \rangle}$. 
By applying the previous equalities to $h_a$ and using Hypothesis $\cR(m)$, one obtains  \\[0.12cm]
\indent $\displaystyle  \ \ \ \ \ \ \ \ \ \ \ \ \ \ \ \ \ \ \ 
(2\pi)^d\, \sum_{n=1}^{+\infty}\E_\mu[f(X_n)\, h(S_n-a)] = I(a) + J(a) + K(a)$ 
\begin{eqnarray*}
\mbox{with}\ \ \ I(a) &=& 
\int_{\|t\| \leq \alpha} \chi(t)\, \hat h(t)\, \frac{\lambda(t)}{1-\lambda(t)} \, L(t)\, e^{-i\langle t,a\rangle}\, dt \\
 J(a) &=& 
\int_{\|t\| \leq \alpha} \chi(t)\, \hat h(t)\, \big(\sum_{n\geq1} R_n(t)\big) \, e^{-i\langle t,a\rangle}\, dt \\
K(a) &=& 
\int_{r<\|t\| \leq b} (1-\chi(t))\, \hat h(t)\, \big(\sum_{n\geq1} E_n(t)\big)\, e^{-i\langle t,a\rangle}\, dt.  
\end{eqnarray*}
Indeed, it follows from (\ref{lambda(t)-inegalite}) that $\forall t\in B(0,\alpha)$, $t\neq 0$, we have $|\lambda(t)| <1$ and  $\sum_{n\geq1}|\lambda(t)|^n = \frac{|\lambda(t)|}{1-|\lambda(t)|} \leq \frac{4}{\langle \Sigma t,t\rangle}$. Since $d\geq3$ and $\Sigma$ is invertible, the function $t\mapsto \frac{1}{\langle \Sigma t,t\rangle}$ is integrable near $t=0$. 
So the term $I(a)$ derives from Lebesgue's theorem. The terms $J(a)$ and $K(a)$ follow from the uniform convergence of the series $\sum_{n\geq1}R_n(\cdot)$ and $\sum_{n\geq1}E_n(\cdot)$ on $B(0,\alpha)$ and $K_{r,b}$ respectively. Theorem~\ref{ren-theo} is then a consequence of the three next lemmas. \fdem
\begin{rem} 
The uniform convergence stated in  Hypothesis~$\cR(m)$ for the series $\sum_{n\geq1}R_n(\cdot)$ and $\sum_{n\geq1}E_n(\cdot)$ is used for defining $J(a)$ and $K(a)$. Of course,  alternative conditions may be used, 
as for instance: $\sum_{n\geq1}\int_{B(0,R)}|R_n(t)|dt < \infty$ and $\sum_{n\geq1}\int_{K_{r,r'}} |E_n(t)|dt < \infty$. 
\end{rem}
\begin{lem} \label{J(a)-K(a)}
We have $J(a) + K(a) = o(\|a\|^{-(d-2)})$ when $\|a\| \r+\infty$. 
\end{lem}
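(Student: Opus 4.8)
The plan is to recognize that both $J(a)$ and $K(a)$ are, up to the choice of Fourier convention, Fourier transforms evaluated at $a$ of explicit compactly supported functions, and then to invoke the standard principle that the Fourier transform of a compactly supported function of class $C^{d-2}$ decays like $o(\|a\|^{-(d-2)})$. Set $\Phi_J(t) := \chi(t)\,\hat h(t)\,\big(\sum_{n\geq 1}R_n(t)\big)$, and let $\Phi_K(t) := (1-\chi(t))\,\hat h(t)\,\big(\sum_{n\geq 1}E_n(t)\big)$ for $r<\|t\|\leq b$ and $\Phi_K(t):=0$ otherwise, so that $J(a)=\int_{\R^d}\Phi_J(t)\,e^{-i\langle t,a\rangle}\,dt$ and $K(a)=\int_{\R^d}\Phi_K(t)\,e^{-i\langle t,a\rangle}\,dt$. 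I would first isolate the auxiliary estimate: \emph{if $\Phi$ is a compactly supported function of class $C^{d-2}$ on $\R^d$, then $\int_{\R^d}\Phi(t)\,e^{-i\langle t,a\rangle}\,dt=o(\|a\|^{-(d-2)})$ as $\|a\|\r+\infty$.} This follows by integrating by parts $d-2$ times in one fixed coordinate direction $t_j$, the boundary terms vanishing by compact support, which yields $(ia_j)^{d-2}\,\hat\Phi(a)=\widehat{\partial_j^{\,d-2}\Phi}(a)$; since $\partial_j^{\,d-2}\Phi$ is continuous with compact support, hence integrable, the Riemann--Lebesgue lemma gives $|a_j|^{d-2}\,|\hat\Phi(a)|\r 0$. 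Summing over $j$ and using the elementary comparison $\|a\|^{d-2}\leq C\sum_{j}|a_j|^{d-2}$ (valid on the unit sphere by compactness and extended by homogeneity) then yields $\|a\|^{d-2}\,|\hat\Phi(a)|\r 0$, which is the claimed estimate.

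It remains to verify that $\Phi_J$ and $\Phi_K$ are indeed compactly supported and of class $C^{d-2}$, and this is exactly where the choice $m=\max\{d-2,2+\varepsilon\}$ is used: it guarantees $\lfloor m\rfloor\geq d-2$, so that the functions $\sum_{n\geq 1}R_n(\cdot)$ and $\sum_{n\geq 1}E_n(\cdot)$ supplied by Hypothesis~$\cR(m)$ are of class $C^{d-2}$ on $B(0,R)$ and on every annulus $K_{r,r'}$, respectively. The factor $\chi$ is $C^\infty$ with support contained in $\{\|t\|\leq\alpha\}\subset B(0,R)$, while $\hat h$ is $C^{d-2}$ with support in $\{\|t\|\leq b\}$ since $h\in\cH_{d-2}$. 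Hence $\Phi_J$ is a product of $C^{d-2}$ functions supported in $B(0,R)$, so $\Phi_J$ is compactly supported of class $C^{d-2}$ and the auxiliary estimate gives $J(a)=o(\|a\|^{-(d-2)})$.

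The slightly more delicate point is $\Phi_K$, and I expect the main obstacle to be the bookkeeping of smoothness and support there rather than the Fourier-decay mechanism itself. The key observation is that $1-\chi$ vanishes identically on $\{\|t\|\leq r\}$, so the region near $t=0$, where $\sum_{n\geq 1}E_n$ need not even be defined, contributes nothing; choosing $r_0<r<b<r'$, Hypothesis~$\cR(m)$-(ii) makes $\sum_{n\geq 1}E_n$ of class $C^{d-2}$ on the open annulus $K_{r_0,r'}$, which contains the support $\{r\leq\|t\|\leq b\}$ of $(1-\chi)\,\hat h$. Moreover $\hat h$, being $C^{d-2}$ with support in $\{\|t\|\leq b\}$, has all its partial derivatives up to order $d-2$ vanishing on $\{\|t\|=b\}$ by continuity, and $1-\chi$ is $C^\infty$ and vanishes on $\{\|t\|\leq r\}$; consequently the product $(1-\chi)\,\hat h\,\sum_{n\geq 1}E_n$ extends by zero across the two spheres $\{\|t\|=r\}$ and $\{\|t\|=b\}$ to a genuinely $C^{d-2}$ compactly supported function on $\R^d$, which is precisely $\Phi_K$. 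Applying the auxiliary estimate to $\Phi_K$ then gives $K(a)=o(\|a\|^{-(d-2)})$, and adding the two estimates yields the lemma.
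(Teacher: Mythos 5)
Your proposal is correct and takes essentially the same route as the paper: the paper's one-line proof just observes that the integrands of $J(a)$ and $K(a)$ belong to $\cC_b^m$ on neighbourhoods of their compact supports with $m=\max\{d-2,2+\varepsilon\}\geq d-2$, and implicitly invokes the standard decay $\hat\Phi(a)=o(\|a\|^{-(d-2)})$ for compactly supported $C^{d-2}$ functions, which is exactly the integration-by-parts/Riemann--Lebesgue argument you spell out, together with the same support bookkeeping for the annular term.
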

\begin{proof} By Hypothesis~$\cR(m)$, the integrands in $J(a)$ and $K(a)$ are respectively in $\cC_b^m(B(0,R),\C)$ (with $R > \alpha$) and in $\cC_b^m(K_{r/2,2b},\C)$ . Since $m=\max\{d-2,2+\varepsilon\}\geq d-2$, this gives the desired result. 
\end{proof}
Now let us investigate $I(a)$. An easy computation yields $I(a) = I_1(a) + I_2(a) +I_3(a)$ with 
\begin{eqnarray*}
I_1(a) &:=& 
\int_{\|t\| \leq \alpha} \chi(t)\, \frac{\hat h(t)\lambda(t)\, L(t) - \hat h(0)L(0)}{1-\lambda(t)}\, 
e^{-i\langle t,a\rangle}\, dt \\
I_2(a) &:=& 
2\, \hat h(0)\, L(0)\, \int_{\|t\| \leq \alpha} \frac{\chi(t)}{\langle \Sigma t,t\rangle}\, e^{-i\langle t,a\rangle}\, dt \\
I_3(a) &:=& 
2\, \hat h(0)\, L(0)\, \int_{\|t\| \leq \alpha} \chi(t)\, \frac{\lambda(t) - 1 + \frac{1}{2} \langle \Sigma t,t\rangle}
{(1-\lambda(t))\langle \Sigma t,t\rangle}\, e^{-i\langle t,a\rangle}\, dt. 
\end{eqnarray*}
\begin{lem} \label{I1(a)-I3(a)} 
We have  $I_1(a) + I_3(a) = o(\|a\|^{-(d-2)})$ when $\|a\| \r+\infty$. 
\end{lem}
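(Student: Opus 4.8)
The plan is to read both $I_1(a)$ and $I_3(a)$ as Fourier transforms, evaluated at $a$, of compactly supported integrands $\Phi_1$ and $\Phi_3$ (both supported in $\{\|t\|\le\alpha\}$ and smooth away from the origin), and to show that each decays strictly faster than $\|a\|^{-(d-2)}$. The guiding principle is that, for such integrands, the size of the Fourier transform at infinity is governed solely by the order of the singularity at $t=0$: a compactly supported function whose only singularity is at $0$, homogeneous there of order $\sigma$ with a regular angular part, has Fourier transform of size $\|a\|^{-(d+\sigma)}$. The whole point will be that, thanks to the centering assumption $\nabla\lambda(0)=0$, both $\Phi_1$ and $\Phi_3$ have a singularity of order \emph{strictly} above $-2$, whereas the order $-2$ singularity of $\chi(t)/\langle\Sigma t,t\rangle$ in $I_2(a)$ is exactly what will produce the leading $\|a\|^{-(d-2)}$ term in the next lemma. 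An order strictly above $-2$ therefore yields $o(\|a\|^{-(d-2)})$.

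First I would record the local behaviour of the ingredients. Since $\nabla\lambda(0)=0$ and $Hess\,\lambda(0)=-\Sigma$ with $\Sigma$ positive definite, (\ref{lambda(t)-inegalite}) gives $1-|\lambda(t)|\ge c\|t\|^2$ near $0$, and a Taylor expansion gives $1-\lambda(t)=\tfrac12\langle\Sigma t,t\rangle+\omega(t)$ where, because $\lambda\in\cC_b^{m}$ with $m\ge 2+\varepsilon$, the remainder is controlled by $|\omega(t)|\le C\|t\|^{2+\varepsilon}$. Thus the denominators $1-\lambda(t)$ and $(1-\lambda(t))\langle\Sigma t,t\rangle$ are bounded below and vanish to order exactly $2$ and $4$ respectively.

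For $I_3(a)$ the numerator is precisely $\omega(t)=\lambda(t)-1+\tfrac12\langle\Sigma t,t\rangle$, so $\Phi_3(t)=O(\|t\|^{\varepsilon-2})$: the singularity has order $\varepsilon-2>-2$. This is exactly where the choice $m=2+\varepsilon$ rather than $m=2$ is used; and since $\Phi_3$ involves $\lambda$ but not $\hat h$, it inherits the full $\cC_b^{m}$-regularity of $\lambda$, which is what rescues the borderline dimensions $d=3,4$. For $I_1(a)$, writing $N(t):=\hat h(t)\lambda(t)L(t)-\hat h(0)L(0)$, one has $N(0)=0$ (because $\lambda(0)=1$), so the leading singular part of $\Phi_1$ is $\chi(t)\,\langle\nabla N(0),t\rangle/(\tfrac12\langle\Sigma t,t\rangle)$, which is \emph{odd} and homogeneous of order $-1>-2$; away from $0$ one has $\Phi_1\in\cC^{d-2}$, the regularity now being limited by $\hat h\in\cC^{d-2}$ (recall $h\in\cH_{d-2}$) together with $\lambda,L\in\cC_b^{m}$, $m\ge d-2$. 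To turn these orders into decay rates I would split each integrand near $0$ into its successive homogeneous components, each of order $\sigma>-2$ contributing a term of size $\|a\|^{-(d+\sigma)}=o(\|a\|^{-(d-2)})$ by the scaling of the Fourier transform of a homogeneous distribution, the cutoff $\chi$ adding only a faster, Schwartz-type correction from the region where $\chi\neq1$; the leftover remainder, vanishing to sufficiently high order, is then disposed of by the integration-by-parts estimate already used in Lemma~\ref{J(a)-K(a)}.

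The hard part will be the borderline regularity: $\hat h$ is only $\cC^{d-2}$, and in dimensions $d=3,4$ one sits exactly at the threshold where a naive integration by parts yields merely $O(\|a\|^{-(d-2)})$ and not the required strict $o$. The resolution is twofold — to exploit that every homogeneous component has order strictly above $-2$, so that its exact Fourier decay beats $\|a\|^{-(d-2)}$ with a full power (for $\Phi_1$, the oddness of the leading $\sigma=-1$ part makes this transparent), and to use the extra Hölder exponent $\varepsilon$ in $m=2+\varepsilon$, which is precisely the cushion pushing the order-$(\varepsilon-2)$ singularity of $\Phi_3$ strictly below the critical rate. Justifying the homogeneous decomposition and the decay of each piece with only finitely many (and, in low dimension, fractional Hölder) derivatives available is the technical heart of the argument, but the strict gap in orders leaves exactly the margin needed to conclude $I_1(a)+I_3(a)=o(\|a\|^{-(d-2)})$.
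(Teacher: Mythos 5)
Your guiding intuition --- that the decay of $I_1(a)$ and $I_3(a)$ is governed by the strength of the singularity of the integrand at $t=0$, and that the centering $\nabla\lambda(0)=0$ pushes that singularity strictly above the critical order $-2$ --- is exactly the one the paper exploits. But the mechanism you propose to convert this into a decay rate has a genuine gap. The ``homogeneous decomposition'' you want to perform does not exist for $\Phi_3$ in dimensions $d=3,4$: there $\lambda$ is only of class $\cC_b^{2+\varepsilon}$, so the numerator $\lambda(t)-1+\tfrac12\langle\Sigma t,t\rangle$ is merely $O(\|t\|^{2+\varepsilon})$ by a H\"older bound, with \emph{no} homogeneous terms to peel off --- the entire integrand is already the ``remainder''. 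And that remainder cannot be ``disposed of by the integration-by-parts estimate already used in Lemma~\ref{J(a)-K(a)}'': that estimate applies to integrands in $\cC_b^m$, i.e.\ with \emph{bounded} derivatives, whereas here every differentiation worsens the blow-up at the origin (the $i$-th derivatives behave like $\|t\|^{-(i+2-\varepsilon)}$, resp.\ $\|t\|^{-(1+i)}$ for $\Phi_1$). The same objection applies to the remainder left over after subtracting the degree $-1$ part of $\Phi_1$: it is bounded but its derivatives are not.

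What is actually needed --- and what your proposal presupposes without supplying --- is a quantitative Fourier-decay lemma for compactly supported functions with a single point singularity: if $u$ is $k$-times differentiable off the origin, its derivatives of order $\le k-1$ are $O(\|t\|^{-s})$ for some $s<d-1$, and its derivatives of order $k$ are Lebesgue-integrable, then $\hat u(a)=o(\|a\|^{-k})$. This is the paper's Proposition~A.1, proved by an integration by parts that excises the origin and invokes Riemann--Lebesgue for the top-order derivatives (this is also where the $o$, rather than $O$, comes from --- not from a strict gap in homogeneity orders). Once this tool is available, the Leibniz-type estimates $|u^{(i)}(t)|=O(\|t\|^{-(1+i)})$ and $|v^{(i)}(t)|=O(\|t\|^{-(i+2-\varepsilon)})$ (Proposition~A.0 of the paper) let one apply it \emph{directly} to $u=\theta_1/(1-\lambda)$ and $v=\theta_3/\bigl((1-\lambda)\langle\Sigma t,t\rangle\bigr)$, with $k=d-2$; no homogeneous decomposition is needed at all. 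So the decomposition step in your plan is both unjustifiable in low dimension and superfluous once the correct lemma is in hand.
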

\begin{proof}
To study $I_1(a)$, let us define the following function on $\R^d$~: 
$$\theta_1(t) = \chi(t)\left(\hat h(t)\lambda(t)\, L(t) - \hat h(0)L(0)\right).$$
Then  $\theta_1$ has $d-2$ continuous derivatives on $\R^d$ and since $\theta_1(0)=0$, 
we have $|\theta_1(t)| = O(\|t\|)$ on $\R^d$. Now set $u(t) = \frac{\theta_1(t)}{1-\lambda(t)}$.  
Some standard derivative arguments  give (see Remark~(a) in Appendix~A.0)
\begin{equation} \label{deri-u}
\forall i=0,\ldots,d-2,\ \ |u^{(i)}(t)| = O(\|t\|^{-(1+i)}),  
\end{equation}
where $u^{(i)}(\cdot)$ stands for any partial derivative of order $i$ of $u$. 
In particular the partial derivatives of order $d-2$ of $u$ are Lebesgue-integrable on $\R^d$.  Since $I_1(a) = \hat u(a)$, 
the claimed property for  $I_1(a)$ follows from Proposition~A.1 in Appendix A.1.  
The same property can be similarly established for $I_3(a)$ by defining the function 
$$\theta_3(t) = \chi(t)\, \left(\lambda(t) - 1 + \frac{1}{2} \langle \Sigma t,t\rangle\right).$$ 
Indeed, if $d\geq 5$, then $\theta_3$ has three continuous derivatives on $\R^d$ and it can be easily seen that 
$|\theta_3(t)| = O(\|t\|^3)$, $|\theta_3^{(1)}(t)|= O(\|t\|^2)$  and $|\theta_3^{(2)}(t)|= O(\|t\|)$.  
Let $v(t) = \frac{\theta_3(t)}{(1-\lambda(t))\langle \Sigma t,t\rangle}$. 
Remark~(b) in Appendix~A.0 then yields 
\begin{equation} \label{deri-v}
\forall i=0,\ldots,d-2,\ \ |v^{(i)}(t)| = O(\|t\|^{-(1+i)}). 
\end{equation}
If $d=3$ or 4, then we only have $|\theta_3(t)| = O(\|t\|^{2+\varepsilon})$, $|\theta_3^{(1)}(t)|= O(\|t\|^{1+\varepsilon})$  and $|\theta_3^{(2)}(t)|= O(\|t\|^\varepsilon)$. Similarly, one gets 
\begin{equation} \label{deri-v'}
\forall i=0,\ldots,d-2,\ \ |v^{(i)}(t)| = O(\|t\|^{-(i+2-\varepsilon)}). 
\end{equation}
From either (\ref{deri-v}) or (\ref{deri-v'}), the desired estimate on $I_3(a) = 2\, \hat h(0)\, \, L(0)\, \hat v(a)$ again follows from Proposition~A.1.  
\end{proof}
\begin{lem} \label{I2(a)}
Set $\displaystyle C'_d = (2\pi)^{\frac{d}{2}}\, 2^{\frac{d}{2}-1}\, 
\Gamma\left(\frac{d-2}{2}\right) (\det\Sigma)^{-\frac{1}{2}}$. We have 
$\displaystyle I_2(a) \sim \frac{C'_d\, \hat h(0)\, L(0)}{\langle \Sigma^{-1}a,a\rangle^{\frac{d-2}{2}}}$ when $\|a\| \r+\infty$.  
\end{lem}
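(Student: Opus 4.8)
The plan is to compute the large-$\|a\|$ asymptotics of the oscillatory integral defining $I_2(a)$ by reducing the quadratic form $\langle\Sigma t,t\rangle$ to the Euclidean norm and then invoking the classical Fourier transform of the Riesz kernel $\|s\|^{-2}$. Since $\Sigma$ is positive definite, let $\Sigma^{1/2}$ be its symmetric positive square root and substitute $s=\Sigma^{1/2}t$, so that $\langle\Sigma t,t\rangle=\|s\|^2$, $\langle t,a\rangle=\langle s,\Sigma^{-1/2}a\rangle$, and $dt=(\det\Sigma)^{-1/2}\,ds$. Writing $b:=\Sigma^{-1/2}a$ (so that $\|b\|^2=\langle\Sigma^{-1}a,a\rangle$) and $\tilde\chi:=\chi\circ\Sigma^{-1/2}$, this turns the integral into
$$\int_{\|t\|\le\alpha}\frac{\chi(t)}{\langle\Sigma t,t\rangle}\,e^{-i\langle t,a\rangle}\,dt=(\det\Sigma)^{-1/2}\int_{\R^d}\frac{\tilde\chi(s)}{\|s\|^2}\,e^{-i\langle s,b\rangle}\,ds=:(\det\Sigma)^{-1/2}\,G(b),$$
where $G(b)$ is an absolutely convergent integral because $\tilde\chi$ is compactly supported and $\|s\|^{-2}$ is locally integrable near $0$ for $d\ge3$. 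It then remains to show that $G(b)\sim c_d\,\|b\|^{-(d-2)}$ with $c_d=\pi^{d/2}\,2^{d-2}\,\Gamma(\frac{d-2}{2})$; substituting back and collecting the prefactor $2\,\hat h(0)\,L(0)\,(\det\Sigma)^{-1/2}$ will then yield exactly $C'_d$, using the identity $(2\pi)^{d/2}2^{d/2-1}=\pi^{d/2}2^{d-1}$.

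To obtain the asymptotics of $G$, I would split $\|s\|^{-2}=\tilde\chi(s)\|s\|^{-2}+\psi(s)$ with $\psi(s):=(1-\tilde\chi(s))\|s\|^{-2}$, both terms being tempered distributions. The Fourier transform of the first term is $G$, while the total is the distributional Fourier transform of the Riesz kernel, given by the classical formula
$$\int_{\R^d}\|s\|^{-2}\,e^{-i\langle s,b\rangle}\,ds=\pi^{d/2}\,2^{d-2}\,\Gamma\!\left(\tfrac{d-2}{2}\right)\|b\|^{-(d-2)},$$
valid for $d\ge3$ (so that $0<2<d$); I would recall its one-line derivation from the subordination identity $\|s\|^{-2}=\Gamma(1)^{-1}\int_0^\infty e^{-u\|s\|^2}\,du$ together with the Gaussian Fourier transform and a change of variable producing $\Gamma(\frac{d-2}{2})$. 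The correction term $\psi$ is the crux: it is of class $C^\infty$ on all of $\R^d$ (the singularity at the origin is removed by $1-\tilde\chi$) and satisfies $\partial^\beta\psi(s)=O(\|s\|^{-2-|\beta|})$ at infinity, so $\partial^\beta\psi\in L^1(\R^d)$ as soon as $|\beta|>d-2$. Integrating by parts $d-1$ times in a coordinate direction of largest modulus then gives $\hat\psi(b)=O(\|b\|^{-(d-1)})=o(\|b\|^{-(d-2)})$, whence $G(b)=\pi^{d/2}2^{d-2}\Gamma(\frac{d-2}{2})\|b\|^{-(d-2)}+o(\|b\|^{-(d-2)})$.

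The main obstacle is precisely the rigorous handling of this correction term, that is, controlling the effect of the localizing cutoff $\chi$ (equivalently $\tilde\chi$) and of the non-absolute convergence of $\int\|s\|^{-2}e^{-i\langle s,b\rangle}\,ds$ at spatial infinity. Since $\psi\notin L^1(\R^d)$, the integration by parts cannot be performed naively; I would justify it either by regularizing $\psi$ with a far-field cutoff $\phi(s/\rho)$, integrating by parts, and letting $\rho\to\infty$ (the boundary and commutator terms vanishing because $|\beta|>d-2$), or equivalently by using the distributional identity $(ib)^\beta\hat\psi=\widehat{\partial^\beta\psi}$ together with the fact that $\hat\psi$ is $C^\infty$ away from $b=0$, its only singular point. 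A secondary point to verify is the equality $\|b\|^{-(d-2)}=\langle\Sigma^{-1}a,a\rangle^{-(d-2)/2}$, which is immediate from $\|b\|^2=\langle\Sigma^{-1}a,a\rangle$, so that the final equivalent takes the stated form in the variable $a$.
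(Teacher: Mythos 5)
Your argument is correct, and it reaches the asymptotics by a genuinely different route than the paper. Both proofs share the same two ingredients: a linear change of variables reducing $\langle \Sigma t,t\rangle$ to $\|s\|^2$ (you use the symmetric square root $\Sigma^{1/2}$, the paper an orthogonal diagonalization $t=P\Delta^{-1}u$ --- a cosmetic difference), and the classical identity $\widehat{(\|\cdot\|^{-2})}=\pi^{d/2}2^{d-2}\Gamma(\frac{d-2}{2})\|\cdot\|^{2-d}$, which the paper proves in its Appendix~A.2 by exactly the subordination formula $\|s\|^{-2}=\int_0^{+\infty}e^{-u\|s\|^2}du$ that you invoke. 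The divergence is in how the cutoff $\chi$ is disposed of. The paper picks $\psi\in S(\R^d)$ with $\hat\psi=\chi$, applies a Parseval-type identity to move the whole integral to physical space, and recognizes $\beta^{d-2}I_2(a)$ (up to constants) as a convolution $(F_\beta*\|\cdot\|^{2-d})(\tilde b)$ of a rescaled Schwartz function against the Riesz kernel; the limit is then an approximate-identity argument (Appendix~A.3), with the uniformity in the direction $\tilde b$ checked by hand. You instead stay on the Fourier side: you split the localized kernel as the exact Riesz kernel minus the smooth tail $(1-\tilde\chi(s))\|s\|^{-2}$, transform the first piece exactly, and bound the transform of the tail by $O(\|b\|^{-(d-1)})$ using $(ib)^{\beta}\hat\psi=\widehat{\partial^{\beta}\psi}$ with $|\beta|=d-1$; this is legitimate since $\partial^{\beta}\psi\in L^1(\R^d)$ precisely when $|\beta|>d-2$, and you correctly identify the non-integrability of the tail itself as the point requiring the regularization or the distributional restriction argument. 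Your version buys a quantitative relative error $O(\|a\|^{-1})$ (automatically uniform in direction) instead of a bare $o(1)$, at the cost of some distribution-theoretic bookkeeping; the paper's version needs no Fourier analysis of non-integrable functions beyond the Riesz formula itself. The constants agree: $2\cdot\pi^{d/2}2^{d-2}=(2\pi)^{d/2}2^{d/2-1}=\pi^{d/2}2^{d-1}$.
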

\begin{proof}
Let $S(\R^d)$ denote the so-called Schwartz space. Since $\chi\in S(\R^d)$ and the Fourier transform is a bijection on $S(\R^d)$, let us call $\psi\in S(\R^d)$ such that $\widehat{\psi}=\chi$. Then, setting $\psi_a(\cdot) := \psi(\cdot -a)$, we have 
$$I_2(a) = 2\hat{h}(0)\, L(0)\,  \int_{\R^d}\frac{\widehat{\psi_a}(t)}{\langle \Sigma t,t\rangle}dt.$$
Set $\Delta=\diag(\sqrt{\lambda_1},\ldots,\sqrt{\lambda_d})$, where 
the $\lambda _i$'s are the eigenvalues of the covariance matrix $\Sigma$ ($\lambda _i>0$ because  $\Sigma$ is invertible), 
and let $P$ be any orthogonal matrix of order $d$ such that 
$$P^{-1}\Sigma P = \Delta^2 = \diag(\lambda_1,\ldots,\lambda_d).$$
We have $\langle \Sigma t,t\rangle = \langle \Delta^2 P^{-1}t ,  P^{-1}t\rangle = \|\Delta  P^{-1}t\|^2$. So, by 
setting $t=P\Delta^{-1}u$, one gets 
$$I_2(a) = 2\, \hat{h}(0)\, L(0)\, \int (\det\Delta)^{-1}\, \widehat{\psi_a}(P\Delta^{-1}u) \, \frac{1}{\|u\|^2}du.$$
The Fourier transform of $g(v) = \psi(P\Delta v-a)$ is 
$\hat g(u) = (\det\Delta)^{-1}\, \widehat{\psi_a}(P\Delta^{-1}u)$. So we have 
\begin{equation} \label{dist}
I_2(a) = 2\, \hat{h}(0)\, L(0) \, c\, \int \psi(P\Delta v-a)\, \frac{1}{\|v\|^{d-2}}\,  dv,
\end{equation}
where $c=(2\pi)^{\frac{d}{2}}\, 2^{\frac{d}{2}-2}\, \Gamma(\frac{d-2}{2})$. Indeed the equality (\ref{dist}) follows from the fact that we have  $\widehat{(\|\cdot\|^{-2})}(v) = c\, \|v\|^{2-d}$ in the sense of temperated distribution  on $\R^d$. (See also Appendix~A.2 for an elementary proof of (\ref{dist})). Now set $b = \Delta^{-1}\, P^{-1}\, a$,  $F(x) = \psi(-P\Delta\, x)$. 
Furthermore, set $\beta=\|b\|$, $b=\beta\, \tilde b$, so $\|\tilde b\| = 1$, and define $F_{\beta}(x) = \beta^d\, F(\beta\, x)$ for any $\beta>0$. Then 
$$I_2(a) = 2\, \hat{h}(0)\, L(0) \, c\, \int F(b-v) \, \frac{1}{\|v\|^{d-2}}\, dv \ = \  
2\, \hat{h}(0)\, L(0) \, c\, \int F_\beta(\tilde b - w) \frac{1}{\|\beta w\|^{d-2}} \, dw,$$
so that $\beta^{d-2}\, I_2(a) = 2\, c\, \hat{h}(0)\, L(0)\, (F_\beta* f_{d-2})(\tilde b)$, where $f_{d-2}(w) := \frac{1}{\|w\|^{d-2}}$, and $*$ denotes the convolution product on $\R^d$. By using some standard arguments of approximate identity, one can prove (see Appendix A.3.) that the following convergence holds uniformly in 
$\tilde b\in\R^d$ such that $\|\tilde b\| =1$~: 
\begin{equation} \label{id-approchee}
\lim_{\beta\r+\infty}(F_{\beta}* f_{d-2})(\tilde b) = \int_{\R^d} F(w)dw.
\end{equation}
\noindent We have $\int F(w)dw =  (\det\Delta)^{-1}\int \psi(y)dy = (\det\Sigma)^{-\frac{1}{2}}$, because $\int \psi(y)dy = \hat{\psi}(0) = \chi(0) = 1$, and $\beta= \|Pb\| = \|\Sigma^{-\frac{1}{2}}a\| = \langle \Sigma^{-1}a,a\rangle^{\frac{1}{2}}$. The above computations then yield Lemma \ref{I2(a)}. 
\end{proof}
\section{Operator-type procedure in Markov models} \label{markov}
In this section $(X_n,S_n)_{n\geq0}$ still denotes a  sequence of random variables taking values in $E\times\R^d$, but from now on $(X_n)_{n\geq 0}$ is assumed to be a Markov chain with state space $E$, transition probability $Q(x,dy)$, and initial distribution $\mu$. We suppose that $S_0=0$. The functional action of $Q(x,dy)$ is given by $Qf(x) = \int_E f(y)\, Q(x,dy)$ provided that this integral is well defined. In this Markov context, we first present a general assumption providing an operator-type formula for the term $\E[f(X_n)\, e^{i\langle t, S_n\rangle}]$ of Hypothesis~$\cR(m)$. Next, we briefly compare the spectral method developed in \cite{bab} with that presented in \cite{fl}. 
\subsection{A Markov context} \label{subsect_semi_group}
For the moment we make the following abuse of notation: given $x\in E$, we denote by  $\P_x$ the underlying probability measure (and by $\E_x$ the associated expectation) to refer to the case when the initial distribution of $(X_n)_{n\geq 0}$, is the Dirac distribution at $x$. This notation takes a precise (and usual) sense in the example below and in the setting of Section~\ref{applic}. 
For any bounded measurable function $f$ on $E$, and for $n\in\N$, $t\in\R^d$, let us define: 
\begin{equation} \label{Qn(t)}
\big(Q_n(t)f\big)(x) := \E_x\big[e^{i\, \langle t, S_n\rangle}\, f(X_n)\big]\ \ \ \ (x\in E).
\end{equation}
Let us observe that  $Q_1(0) = Q$. Let us consider the following condition: 

\noindent {\bf Condition ($\cG$).} {\it For all $t\in\R^d$, $(Q_n(t)_{n\in\N}$ is a semi-group, that is:  }
$$\forall(m,n)\in\N^2,\ \ Q_{m+n}(t) = Q_m(t) \circ Q_n(t).$$
Under Condition~($\cG$), we have in particular $Q_{n}(t) = Q(t)^n$ where $Q(t):=Q_1(t)$ is defined by: 
\begin{equation} \label{noyau-fourier}
\big(Q(t)f\big)(x) := \E_x\big[e^{i\, \langle t, S_1\rangle}\, f(X_1)\big]\ \ \ \ (x\in E). 
\end{equation}
The $Q(t)$'s are called the Fourier operators, and in view of the study of Hypothesis~$\cR(m)$, one can see that (\ref{Qn(t)}) provides the following interesting formula : 
\begin{equation} \label{Qn(t)-bis}
\E_x\big[e^{i\, \langle t, S_n\rangle}\, f(X_n)\big] = \big(Q(t)^nf\big)(x)\ \ \ \ (x\in E).
\end{equation}

\noindent {\it Example: the Markov random walks.} \\[0.1cm] 
If $(X_n,S_n)_{n\in\N}$ is a Markov chain with state space $E\times \R^d$ and transition probability $P$ satisfying the following property 
$$\forall(x,a)\in E\times\R^d,\ \forall A\in\cE,\ \forall B\in B(\R^d),\ \ \ P\big((x,a),A\times B\big) = 
P\big((x,0),A\times (B-a)\big),$$
the sequence $(X_n,S_n)_{n\in\N}$ is called a Markov random walk (MRW). Of course $(X_n)_{n\in\N}$ is then also a Markov chain, called the driving Markov chain of the MRW. We still assume that $S_0=0$. The previous translation property is equivalent to the following one: for any bounded measurable function $F$ on $E\times\R^d$ and for all $a\in\R^d$, we have $(PF)_{a} = P(F_{a})$, where we set for any function $G : E\times\R^d\r \C$: $\, G_{a}(x,b) := G(x,b+a)$.  
Let us check Condition~($\cG$). Let $t\in\R^d$ (fixed). Given $f : E\r\R$ bounded and measurable, we set: $F(x,b) := f(x)\, e^{i\, \langle t, b\rangle}$ for $x\in E$ and $b\in\R^d$. We have: 
\begin{eqnarray*} 
\big(Q_n(t)f\big)(x) := \E_{(x,0)}\big[e^{i\, \langle t, S_n\rangle}\, f(X_n)\big] 
&=& \E_{(x,0)}\big[(PF)(X_{n-1},S_{n-1})\big] \\
&=& \E_{(x,0)}\big[(PF)_{S_{n-1}}(X_{n-1},0)\big] \\
&=& \E_{(x,0)}\big[(PF_{S_{n-1}})(X_{n-1},0)\big] \\
&=& \E_{(x,0)}\big[e^{i\, \langle t, S_{n-1} \rangle}\, (PF)(X_{n-1},0)\big], 
\end{eqnarray*} 
(for the last equality, use: $F_a(y,b) = f(y)\, e^{i\, \langle t, b+a \rangle} =  e^{i\, \langle t , a \rangle}\, F(y,b)$). Therefore we have: $\big(Q_n(t)f\big)(x) = \E_{(x,0)}\big[e^{i\, \langle t, S_{n-1} \rangle}\, g(X_{n-1})\big] = \big(Q_{n-1}(t)g\big)(x)$ with 
$$g(\cdot):=(PF)(\cdot,0) = \E_{(\cdot,0)}[f(X_1)\, e^{i\, \langle t, S_1\rangle}] = (Q_1(t)f)(\cdot).$$
We have proved that: $\big(Q_n(t)f\big)(x) = \big(Q_{n-1}(t)\circ Q_1(t)f\big)(x)$, hence Condition~($\cG$) is fulfilled.  

\noindent As a classical example of Markov random walk, recall that, if $(S_n)_{n\in\N}$ is a $d$-dimensional additive functional of a Markov chain $(X_n)_{n\in\N}$, then $(X_n,S_n)_{n\in\N}$ is a MRW. 
\subsection{Spectral methods} \label{nag_method}
When $(X_n,S_n)_{n\geq0}$ satisfies Condition ($\cG$), the spectral method consists in investigating the regularity conditions of  Hypothesis $\cR(m)$ via Formula~(\ref{Qn(t)-bis}). Let us outline the usual spectral method developed in \cite{bab} and the key idea of the weak spectral method presented in \cite{fl}. Given Banach spaces $\cB$ and $X$, we denote by $\cL(\cB,X)$ the space of the bounded linear operators from $\cB$ to $X$, with the simplifications $\cL(\cB) = \cL(\cB,\cB)$ and $\cB' = \cL(\cB,\C)$. 

\noindent {\it (I) The spectral method via the standard perturbation theorem.} \\[0.1cm]
Let $(\cB,\|\cdot\|_{\scriptsize \cB})$ be a Banach space of $\C$-valued measurable functions on $E$. The operator norm in $\cL(\cB)$ is also denoted by $\|\cdot\|_{\scriptsize \cB}$. Let us assume that $(X_n)_{n\geq0}$ possesses a stationary distribution $\pi$ defining a continuous linear form on $\cB$. In \cite{bab}, the assumptions are the following ones: for each $t\in\R^d$, the Fourier operator $Q(t)$ defined in (\ref{noyau-fourier}) is in $\cL(\cB)$, and \\[0.18cm]
\indent (A) $\, Q$ is strongly ergodic on $\cB$, namely~: $\ \lim_n Q^n = \pi$ in $\cL(\cB)$, \\[0.12cm]
\indent (B) $\, $ For any compact set $K\subset \R^d\setminus\{0\}$: $\exists\rho<1,\ \sup_{t\in K}\|Q(t)^n\|_{\scriptsize \cB} = O(\rho^n)$, \\[0.12cm]
\indent (C) $\, Q(\cdot)\in\cC_b^m\big(\R^d,\cL(\cB)\big)$ for some $m\in\N$. \\[0.18cm]
Let $\rho_0\in(\rho,1)$. From (B), $(z-Q(t))^{-1}$ is defined for all $t\in K$ and $z\in\C$ such that $|z|=\rho_0$. By (C), we have: 
$(z-Q(\cdot))^{-1}\in\cC_b^m(K,\cL(\cB))$, and from $Q(t)^n = \frac{1}{2i\pi} \oint_{|z|=\rho_0} z^n(z-Q(t))^{-1}dz$, Hypothesis~$\cR(m)$-(ii) can be then deduced from (\ref{Qn(t)-bis}) when $f\in\cB$. Next, using (A) (C) and the standard perturbation theorem, one can prove that, for $t\in B:=B(0,R)$ for some $R>0$, $Q(t)^n  = \lambda(t)^n\Pi(t) + N(t)^n$ where $\lambda(t)$ is the perturbed eigenvalue near $\lambda=1$, $\Pi(t)$ is the corresponding rank-one eigenprojection, and $N(t)\in\cL(\cB)$ satisfies 
$\sup_{t\in B}\|N(t)^n\|_{\scriptsize \cB} = O(\kappa^n)$ for some $\kappa\in(0,1)$. Since the previous eigen-elements inherit the regularity of $Q(\cdot)$, Hypothesis~$\cR(m)$-(i) is fulfilled. \\[0.12cm]
\noindent  As already mentioned in Section~\ref{intro}, Condition~(C) requires strong assumptions on $Q$ and $S_1$. In fact, by deriving formally (\ref{noyau-fourier}) in the variable $t$ (in case $d=1$ and at $t=0$ to simplify), one gets : $(Q^{(m)}(0)f)(x) := i^m\, \E_x[S_1^m\, f(X_1)]$. Therefore a necessary condition for (C) to be true is that $x\mapsto \E_x[S_1^m\, f(X_1)]\in\cB$ for all $f\in\cB$. 
This condition may be clearly quite restrictive or even not satisfied when $S_1$ is unbounded.  

\noindent {\it (II) A weak spectral method via the Keller-Liverani perturbation theorem.} \\[0.1cm]
\noindent The following alternative assumptions are proposed in \cite{fl}: (A) (B) hold true with respect to a  whole family $\{{\cB }_\theta,\ \theta\in I\}$ of Banach spaces (instead of a single one as in (I)), and (C) is replaced by the following condition: \\[0.15cm]
\indent (C') $\,$ On each space $\cB _\theta$,  $Q(\cdot)$ satisfies the  Keller-Liverani perturbation theorem \cite{keli} near $t=0$, and $Q(\cdot)\in\cC_b^k\big(\R^d,\cL(\cB_\theta,\cB_{\theta'})\big)$ for suitable $\cB_\theta\subset \cB_{\theta'}\, $ ($k=0,\ldots,m$). \\[0.15cm]
The above regularity hypothesis will involve (in substance) that we have, for some suitable $b, c\in I$, the following condition: $\forall f\in\cB_{b}$, $\, x\mapsto \E_x[S_1^m\, f(X_1)]\in\cB_{c}$ (with $\cB_{b}\subset \cB_{c}$). Thanks to the "gap" between $\cB_{b}$ and $\cB_{c}$, this condition is less restricting than in (I). Of course, the passage from (C') to the regularity properties of the maps $(z-Q(\cdot))^{-1}$ is here more difficult than in the method (I) because $(z-Q(t))^{-1}$ must be seen as elements of $\cL(\cB_\theta,\cB_{\theta'})$ according to a procedure involving several suitable values $(\theta,\theta')\in I^2$. Such results have been presented in \cite{gouliv,fl,seb-08}. 
\section{Application to three classical Markov models} \label{applic}
In this section, the weak spectral method is applied to the three classical models cited in Section~\ref{intro}: the $v$-geometrically ergodic Markov chains, the $\rho$-mixing Markov chains, and the iterative Lipschitz models. In order to use directly some technical results of \cite{fl}, we consider Markov random walks of the form $S_n = \sum_{k=1}^{n} \xi(X_k)$, where $\xi : E\r\R^d$ is a centered functional. 
We establish below that, in the three above models, the property~(\ref{ren-intro}) holds under the nonlattice condition and under some moment conditions on $\xi$ of order $m_d+\varepsilon$, where $m_d$ has been defined in Section~\ref{intro} by $m_d = \max\{d-2,2\}$. First, let us give some common definitions and preliminary properties that will be used in the three models. 

\noindent Let $(X_n)_{n\geq 0}$ be a Markov chain with state space $(E,\cE)$, transition probability $Q(x,dy)$, stationary distribution $\pi$, and initial distribution $\mu$. Let $\xi = (\xi_1,\ldots,\xi_d)$ be a $\R^d$-valued measurable functional  on $E$, and let us define the following associated random walk: \\[0.1cm]
\indent $\displaystyle \ \ \ \ \ \ \ \ \ \ \ \ \ \ \ \ \ \ \ \ \ \ \ \ \ \ \ \ \ \ \ \ \ \ \ \ \ \ \ \ 
S_n = \sum_{k=1}^{n} \xi(X_k)$. \\[0.12cm]
{\it Throughout this section, we assume that $d\geq3$, and that $\xi$ is $\pi$-centered (i.e.~$\xi$ is $\pi$-integrable and $\pi(\xi_i) = 0$ for $i=1,\ldots,d$). }

\noindent Obviously $(X_n,S_n)_{n\in\N}$ is a special case of Markov random walk (with $(X_n)_{n\geq 0}$ as driving chain), and the Fourier operators $Q(t)$ in (\ref{noyau-fourier}) are here given by the kernels 
\begin{equation} \label{fourier-kernels}
Q(t)(x,dy) = e^{i\langle t,\, \xi(y)\rangle}Q(x,dy). 
\end{equation}
All the Banach spaces (say $(\cB,\|\cdot\|_{\scriptsize \cB})$) used in this section are either composed of complex-valued $\pi$-integrable functions on $E$, or composed of classes (modulo $\pi$) of such functions. In addition $\cB$ contains $1_E$ (or $Cl(1_E)$), and $\pi$ always defines  a continuous linear form on $\cB$ (i.e.~$\pi\in\cB'$), so that the following rank-one projection can be defined on $\cB$:  
\begin{equation} \label{Pi}
\Pi f = \pi(f)1_E\ \  (f\in\cB). 
\end{equation}
We will say that $(X_n)_{n\geq 0}$ is strongly ergodic on $\cB$ if $Q$ continuously acts on $\cB$ (i.e.~$Q\in\cL(\cB)$) and satisfies the following property: 
\begin{equation} \label{K1}
\exists \kappa_0<1,\ \exists C>0,\ \forall n\geq1,\ \ \|Q^n-\Pi\|_{\scriptsize \cB} \leq C\, \kappa_0^{n},
\end{equation}
where $\|\cdot\|_{\scriptsize \cB}$ denotes the operator norm on $\cB$. Condition~(\ref{K1}) is equivalent to that already mentioned in Subsection~\ref{nag_method}~: $\lim_n\|Q^n-\Pi\|_{\scriptsize \cB} = 0$. We will repeatedly use the following consequence of Formula~(\ref{Qn(t)-bis}):  
\begin{equation} \label{CF} 
f\in\cB,\ \mu\in\cB'\ \Rightarrow\ \forall n\geq 1,\ \forall t\in\R^d,\ \ \ \E_\mu[f(X_n)\, e^{i\langle t,S_n\rangle}] = \mu(Q(t)^nf).
\end{equation} 
All the next conditions on the initial distribution $\mu$ are satisfied with $\mu = \pi$, and in Subsections~\ref{sub_sec_v_geo} and \ref{sub_sec_ite} when $\mu$ is the Dirac distribution at any $x\in E$. 

\noindent It will be seen that the assumptions of the next corollaries always imply that the following asymptotic covariance matrix is well-defined: 
\begin{equation} \label{sigma}
\Sigma = \lim_n\frac{1}{n}\, \E_\pi[S_nS_n^*]. 
\end{equation}

\noindent Now let us introduce the  following classical (Markov) nonlattice condition:  

\noindent {\it Nonlattice Condition~: $(Q,\xi)$, or merely $\xi$, is said nonlattice if  
there exist no $b\in\R^d$, no closed subgroup $H$ in $\R^d$, $H\neq \R^d$, no $\pi$-full $Q$-absorbing set
$A\in\cE$, and finally no bounded measurable function $\theta\, :\, E\r\R^d$ such that }
$$\forall x\in A, \ \ \xi(y)+\theta(y)-\theta(x)\in b+H\ \ Q(x,dy)-a.s.$$ 
As usual, $A\in\cE$ is said to be $\pi$-full if $\pi(A)=1$, and $Q$-absorbing if $Q(a,A)=1$ for all $a\in A$. 
Finally each partial derivative $\frac{\partial^k Q}{\partial t_{p_1}\cdots\partial t_{p_k}}(t)$ of the Fourier kernels $Q(t)$ (see (\ref{fourier-kernels})) is defined by means of the kernel  
\begin{equation} \label{deri-partial-kernel}
Q_{(p_1,\ldots,p_k)}(t)(x,dy) = i^k\left(\prod_{s = 1}^k \xi_{p_s}(y)\right)e^{i\langle t,\xi(y)\rangle}\, Q(x,dy).
\end{equation}
\subsection{Applications to the $v$-geometrically ergodic Markov chains.} \label{sub_sec_v_geo}
We suppose that $\cE$ is countably generated. Let $v : E\r [1,+\infty)$ be some fixed unbounded function, and let us assume that $(X_n)_{n\geq0}$ is $v$-geometrically ergodic, namely~:  we have (\ref{K1}) on the  weighted supremum-normed space $(\cB_v,\|\cdot\|_v)$ of all measurable functions $f : E\r\C$ satisfying  the condition~: 
$$\|f\|_v  = \sup_{x\in E} \frac{|f(x)|}{v(x)} < \infty.$$  
Notice that the previous assumption involves $\pi(v) < \infty$. The $v$-geometrical ergodicity condition can be investigated with the help of the so-called drift conditions. 
For this fact, and for the classical examples of such models, we refer to \cite{mey}.  

\noindent The method (II) outlined in Subsection~\ref{nag_method} is applied here with the following spaces~: for $0<\theta\leq1$, we denote by $(\cB_\theta,\|\cdot\|_\theta)$ the weighted supremum-normed space associated to $v^\theta$, where we set 
$$\|f\|_\theta =  \sup_{x\in E}\frac{|f(x)|}{v(x)^{\theta}}.$$ 
\begin{cor} \label{geo-ren-gene}
Assume that $\mu(v)<\infty$ and that $\xi : E\r\R^d\, $ ($d\geq 3$) is a $\pi$-centered nonlattice functional such that we have with $m_d = \max\{d-2,2\}$: 
\begin{equation} \label{xi-geo}
\exists C>0,\ \exists\, \delta_0>0,\ \ \|\xi\|^{m_d+\delta_0}\leq C\, v.
\end{equation} 
Let $f\in\cB_\vartheta$, $f\geq 0$, where $\vartheta$ is such that $0 < \vartheta < 1-\frac{m}{m_d+\delta_0}$ with $m\in(2,2+\delta_0)$ if $d=3, 4$, and $\, m=d-2\, $ if $\, d\geq 5$. 
\\[0.1cm]
Then we have (\ref{ren-rate-theo}) with $L(0) = \pi(f)$ and $\Sigma$ defined by (\ref{sigma}).  
\end{cor}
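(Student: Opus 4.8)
The plan is to obtain the statement as a direct application of Theorem~\ref{ren-theo} to the Markov random walk $S_n=\sum_{k=1}^n\xi(X_k)$, so that the whole task reduces to checking, for the given $f\in\cB_\vartheta$, that Hypothesis~$\cR(m)$ holds with the prescribed $m$, that $\nabla\lambda(0)=0$, and that $\Sigma$ is positive definite. The natural tool is the weak spectral method~(II) of Subsection~\ref{nag_method}: starting from the identity $\E_\mu[f(X_n)\,e^{i\langle t,S_n\rangle}]=\mu(Q(t)^n f)$ of~(\ref{CF}), I would run the Keller--Liverani perturbation machinery of~\cite{keli,fl} on the scale $\{\cB_\theta\}_{0<\theta\leq1}$ of weighted supremum spaces attached to $v^\theta$. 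These spaces are nested, $\cB_\theta\subset\cB_{\theta'}$ for $\theta\leq\theta'$ (because $v\geq1$), and $v$-geometric ergodicity on $\cB_1=\cB_v$ transfers to strong ergodicity~(\ref{K1}) on every $\cB_\theta$; this is exactly what makes condition~(A) available on the whole scale rather than on a single space as in~\cite{bab}.

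The heart of the matter is the regularity condition~(C'), i.e.\ controlling the derivatives of $t\mapsto Q(t)$ as operators acting between two rungs of the scale. By~(\ref{deri-partial-kernel}) the partial derivative of order $k$ of $Q(t)$ has kernel $i^k\big(\prod_{s=1}^k\xi_{p_s}(y)\big)e^{i\langle t,\xi(y)\rangle}Q(x,dy)$, so applying it to $f$ with $|f|\leq v^\theta$ produces a function dominated by $\|\xi\|^k\,v^\theta$. The moment hypothesis~(\ref{xi-geo}) gives $\|\xi\|^{k}\leq C\,v^{k/(m_d+\delta_0)}$, hence this derivative maps $\cB_\theta$ boundedly into $\cB_{\theta'}$ as soon as $\theta'\geq\theta+k/(m_d+\delta_0)$. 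Taking $k$ up to $\lfloor m\rfloor$, with the extra $(m-\lfloor m\rfloor)$-H\"older control of the top derivative costing only a small further increment of the exponent, I obtain $Q(\cdot)\in\cC_b^m\big(\R^d,\cL(\cB_\vartheta,\cB_{\vartheta'})\big)$ with $\vartheta'\approx\vartheta+m/(m_d+\delta_0)$. This is precisely where the standing assumption $0<\vartheta<1-\frac{m}{m_d+\delta_0}$ enters: it guarantees $\vartheta'<1$, so the top rung still lies inside the admissible scale and the perturbation argument never leaves $\{\cB_\theta\}_{0<\theta\leq1}$. The choice of $m$ ($m\in(2,2+\delta_0)$ for $d=3,4$, and $m=d-2$ for $d\geq5$) is tailored so that the requirement $m=\max\{d-2,2+\varepsilon\}$ of Theorem~\ref{ren-theo} is met while the moment order $m_d+\delta_0$ of~(\ref{xi-geo}) still suffices.

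With~(C') in hand, the nonlattice condition yields the uniform decay~(B): for every compact $K\subset\R^d\setminus\{0\}$ one has $\sup_{t\in K}\|Q(t)^n\|_\theta=O(\rho^n)$ for some $\rho<1$, since the nonlattice property forbids any unimodular number from being a spectral value of $Q(t)$ when $t\neq0$. Feeding~(A),~(B),~(C') into the abstract Keller--Liverani expansion of~\cite{fl} gives, for $\|t\|$ small, $Q(t)^n=\lambda(t)^n\Pi(t)+N(t)^n$ with $\lambda(\cdot),\Pi(\cdot)$ of class $\cC_b^m$ and $\sup\|N(t)^n\|=O(\kappa^n)$ for some $\kappa<1$. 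The hypothesis $\mu(v)<\infty$ makes $\mu$ a continuous form on every rung, so setting $L(t)=\mu(\Pi(t)f)$ and $R_n(t)=\mu(N(t)^n f)$ in~(\ref{CF}) delivers Hypothesis~$\cR(m)$-(i), with $L(0)=\mu(\Pi(0)f)=\pi(f)$ because $\Pi(0)$ is the rank-one projection~(\ref{Pi}); condition~$\cR(m)$-(ii) follows on each annulus $K_{r,r'}$ from~(B) together with the representation $Q(t)^n=\frac{1}{2i\pi}\oint z^n(z-Q(t))^{-1}dz$ and the regularity of $(z-Q(\cdot))^{-1}$.

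It then remains to identify the low-order data. Proposition~\ref{deri-hess}(i) applied with $f=1_E$ gives $\nabla\lambda(0)/i=\lim_n\E_\pi[S_n]/n=\pi(\xi)=0$, so the walk is centered, and Proposition~\ref{deri-hess}(ii) gives $\Sigma=-Hess\,\lambda(0)=\lim_n\E_\pi[S_nS_n^*]/n$, which is~(\ref{sigma}). For positive definiteness of $\Sigma$ I would argue once more from the nonlattice condition: if $\langle u,\Sigma u\rangle=0$ for some $u\neq0$, a standard coboundary argument forces $\langle u,\xi\rangle$ to be a bounded coboundary, whence $(Q,\xi)$ is lattice with $H=u^\perp$ a proper closed subgroup of $\R^d$, contradicting the nonlattice assumption. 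Once $\Sigma$ is positive definite, Theorem~\ref{ren-theo} applies verbatim and yields~(\ref{ren-rate-theo}) with $L(0)=\pi(f)$. I expect the main obstacle to be the bookkeeping of the second paragraph: making~(C') precise enough that Keller--Liverani produces eigen-elements of the full regularity $\cC_b^m$ while staying inside $\{\cB_\theta\}_{0<\theta\leq1}$, which is exactly what the gap between the rungs $\vartheta$ and $\vartheta'$, controlled by $\vartheta<1-m/(m_d+\delta_0)$, is designed to guarantee.
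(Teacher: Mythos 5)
Your proposal is correct and follows essentially the same route as the paper: reduction to Theorem~\ref{ren-theo}, verification of Hypothesis~$\cR(m)$ via the Keller--Liverani machinery on the scale $\{\cB_\theta\}_{0<\theta\leq1}$ with the derivative bounds $\|\xi\|^k\leq C\,v^{k/(m_d+\delta_0)}$ producing operators in $\cL(\cB_\theta,\cB_{\theta'})$, then Proposition~\ref{deri-hess} for $\nabla\lambda(0)=0$ and~(\ref{sigma}), and the nonlattice condition for positive definiteness of $\Sigma$ and for condition~(B) on annuli. The bookkeeping you flag as the main obstacle (passing from~(C') to $\cC_b^m$-regularity of $z\mapsto\nu((z-Q(\cdot))^{-1}g)$ through a chain of intermediate spaces $\cB^{j}$) is exactly what the paper delegates to \cite[Prop.~10.3]{fl} and \cite[Th.~3.3]{seb-08} in Lemma~\ref{derivation}.
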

\noindent In order to present a general study of Hypothesis~$\cR(m)$ in the present context, we consider below any real numbers $m, \eta$ such that $0<m<\eta$,  we set $\tau = m- \lfloor m\rfloor$, where $\lfloor m\rfloor$ denotes the integer part of $m$, and we assume that $\xi : E\r\R^d$ is a measurable function such that: 
\begin{equation} \label{xi-geo-proof}
\exists C>0,\ \ \|\xi\|^{\eta}\leq C\, v.
\end{equation} 
\noindent Thanks to Theorem~\ref{ren-theo}, Corollary~\ref{geo-ren-gene} follows from the next Propositions~\ref{prop-Rm(i)}-\ref{prop-Rm(ii)} (applied with $\eta = m_d+\delta_0$) and from Remark~\ref{rem-deri-hess}.  

\begin{pro} \label{prop-Rm(i)}
Let $\vartheta_m$ be such that $0<\vartheta_m < \vartheta_m+\frac{m}{\eta} < 1$. Assume that $\xi$ satisfies Condition~(\ref{xi-geo-proof}), that $\mu(v)<\infty$, and that $f\in\cB_{\vartheta_m}$. Then Hypothesis~$\cR(m)$-(i) holds true with $L(0) = \pi(f)$. 
\end{pro}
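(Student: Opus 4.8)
\emph{Proof plan.} The plan is to write $\E_\mu[f(X_n)\,e^{i\langle t,S_n\rangle}]=\mu(Q(t)^n f)$ via (\ref{CF}) and to produce the decomposition (\ref{X_n-S-n}) from a spectral decomposition of the Fourier operators $Q(t)$ of (\ref{fourier-kernels}), obtained through the weak perturbation method outlined in method (II) and developed in \cite{fl}. The whole argument takes place on the scale $\{\cB_\theta : 0<\theta\leq1\}$.

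First I would set up the operator framework and the regularity of $Q(\cdot)$ along this scale. For $g\in\cB_\theta$ one has $|Q(t)g(x)|\leq (Qv^\theta)(x)$, so $Q(t)\in\cL(\cB_\theta)$ with a bound uniform in $t\in\R^d$, and the uniform power bound $\sup_t\|Q(t)^n\|_\theta<\infty$ follows from the strong ergodicity (\ref{K1}) of $Q$ on $\cB_\theta$. The decisive point is the behaviour of the derivative kernels (\ref{deri-partial-kernel}): since $\|\xi\|^\eta\leq C\,v$, the factor $\prod_{s=1}^k\xi_{p_s}(y)$ is dominated by $C^{k/\eta}v(y)^{k/\eta}$, whence $Q_{(p_1,\dots,p_k)}(t)$ maps $\cB_\theta$ boundedly into $\cB_{\theta+k/\eta}$. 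A dominated-convergence argument together with the elementary bound $|e^{i\langle t,\xi\rangle}-e^{i\langle t',\xi\rangle}|\leq C\,\|\xi\|^\tau\|t-t'\|^\tau$ for the fractional part then gives $Q(\cdot)\in\cC_b^m\big(\R^d,\cL(\cB_{\vartheta_m},\cB_{\vartheta_m+m/\eta})\big)$, each of the $\lfloor m\rfloor$ derivatives raising the weight-exponent by $1/\eta$ and the top $\tau$-Hölder estimate costing the remaining $\tau/\eta$. This is precisely where the hypothesis $\vartheta_m+\frac{m}{\eta}<1$ is used: it keeps the target space $\cB_{\vartheta_m+m/\eta}$ inside the scale on which the chain is strongly ergodic.

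Next I would invoke the weak spectral method. On $\cB_{\vartheta_m}$ the operator $Q=Q(0)$ is strongly ergodic, with $1$ as a simple isolated dominant eigenvalue and rank-one eigenprojection $\Pi$ given by (\ref{Pi}). Feeding the continuity and uniform bounds of the previous step into the Keller--Liverani theorem \cite{keli} as implemented in \cite{fl}, one obtains $R>0$ such that, for every $t\in B(0,R)$, $Q(t)$ has a simple dominant eigenvalue $\lambda(t)$ close to $1$ with rank-one eigenprojection $\Pi(t)$, and a complementary part $N(t)$ with $\sup_{t\in B(0,R)}\|N(t)^n\|=O(\kappa^n)$ for some $\kappa<1$; thus $Q(t)^n=\lambda(t)^n\Pi(t)+N(t)^n$. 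I would then set $L(t):=\mu(\Pi(t)f)$ and $R_n(t):=\mu(N(t)^n f)$. Because $\mu(v)<\infty$ and $v^{\vartheta_m}\leq v$, the measure $\mu$ defines a continuous linear form on $\cB_{\vartheta_m}$, so these are well defined and (\ref{CF}) yields (\ref{X_n-S-n}). At $t=0$ one has $\Pi(0)=\Pi$, hence $L(0)=\mu(\pi(f)\,1_E)=\pi(f)$ since $\mu$ is a probability measure; and the uniform bound on $\|N(t)^n\|$ makes $\sum_{n\geq1}R_n(\cdot)$ uniformly convergent on $B(0,R)$.

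The main obstacle is the final regularity claim: that $\lambda(\cdot)$, $L(\cdot)$ and $\sum_{n\geq1}R_n(\cdot)$ all belong to $\cC_b^m\big(B(0,R),\C\big)$. In contrast with the standard method (I), the eigen-elements do \emph{not} inherit the smoothness of $Q(\cdot)$ directly, since $Q(\cdot)$ is smooth only as a map between the \emph{distinct} spaces $\cB_{\vartheta_m}$ and $\cB_{\vartheta_m+m/\eta}$. To overcome this I would follow \cite{fl}: express $\lambda(t)$, $\Pi(t)$ and $N(t)$ as contour integrals of the resolvent $(z-Q(t))^{-1}$, and bootstrap the $\cC_b^m$ regularity through the scale $\{\cB_\theta\}$ by repeatedly differentiating the resolvent identity, each differentiation being absorbed by the available gap $m/\eta<1-\vartheta_m$. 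For the series, the same estimates show that every partial derivative of order $\leq\lfloor m\rfloor$ of $t\mapsto N(t)^n$ is $O(n^{\lfloor m\rfloor}\kappa^n)$ in operator norm, with the top-order derivative uniformly $\tau$-Hölder in $t$; these bounds are summable in $n$ and give $\sum_{n\geq1}R_n(\cdot)\in\cC_b^m\big(B(0,R),\C\big)$. This transfer of regularity through the weak perturbation theorem is the technical heart of the argument and the place where the sharp moment hypothesis (\ref{xi-geo-proof}) is fully exploited.
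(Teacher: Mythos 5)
Your proposal is correct and follows essentially the same route as the paper: reduction via (\ref{CF}) to the Fourier operators, the Keller--Liverani decomposition $Q(t)^n=\lambda(t)^n\Pi(t)+N(t)^n$ with $L(t)=\mu(\Pi(t)f)$ and $R_n(t)=\mu(N(t)^nf)$ given by contour integrals of the resolvent, and the transfer of $\cC_b^m$ regularity through the scale $\{\cB_\theta\}$ using the derivative kernels (\ref{deri-partial-kernel}) and the gap $\vartheta_m+m/\eta<1$ (the paper delegates this last step to Lemmas~\ref{deri-geo}--\ref{derivation}, i.e.\ to \cite{fl} and \cite{seb-08}, exactly as you propose). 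The only cosmetic difference is that the paper extracts the regularity of $\lambda(\cdot)$ from the explicit quotient formula (\ref{lamda}) rather than directly from a contour integral, and inserts small extra gaps $\delta,\tau'-\tau$ between consecutive spaces, which your sketch leaves implicit.
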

\begin{proof} 
For any $\kappa\in(0,1)$, let us define 
\begin{equation} \label{cD-kappa}
\cD_\kappa = \big\{z : z\in\C,|z|\geq \kappa,\ |z-1|\geq (1-\kappa)/2\big\}. 
\end{equation}
\begin{lem} \label{cK-geo} 
Let $\theta\in (0,1]$. Then Property~(\ref{K1}) is satisfied on $\cB_\theta$. Moreover, there exists $\kappa_\theta\in(0,1)$ such that, for all $\kappa\in(\kappa_\theta,1)$, there exists $R_\kappa>0$ such that we have: \\[0.15cm]
\indent $\forall z\in\cD_\kappa,\ \forall t\in B(0,R_\kappa),\ \ (z-Q(t))^{-1}\in\cL(\cB_\theta)$, \\[0.15cm]
\indent $\sup\big\{\, \|(z-Q(t))^{-1}\|_\theta\, :\ z\in\cD_\kappa,\  t\in B(0,R_\kappa)\, \big\}< \infty$. 
\end{lem}
\begin{proof} 
By hypothesis ($v$-geometrical ergodicity), we have (\ref{K1}) on $\cB_1$. The fact that  (\ref{K1}) extends to $\cB_\theta$ is a well-known consequence of the so-called drift conditions and Jensen's inequality, see e.g.~\cite[Lem.~10.1]{fl}. The second assertion follows from the Keller-liverani perturbation theorem \cite{keli}, see \cite[Lem.~10.1 and $\S$~4]{fl}. 
\end{proof}
Now let $\kappa\in(\kappa_{\vartheta_m},1)$. Let $\Gamma_{0,\kappa}$ be the oriented circle centered at $z=0$, with radius $\kappa$, and let $\Gamma_{1,\kappa}$ be the oriented circle centered at $z=1$, with radius $\frac{1-\kappa}{2}$. From (\ref{K1}) and the Keller-liverani theorem (both applied on $\cB_{\vartheta_m}$), we can write for all $t\in B(0,R_\kappa)$ the following equality in $\cL(\cB_{\vartheta_m})$ (e.g.~see \cite[Sec.~7]{fl}): 
\begin{equation} \label{Q_l_L_N}
Q(t)^n  = \lambda(t)^n\Pi(t) + N(t)^n
\end{equation}
where $\lambda(t)$ is the perturbed eigenvalue near $\lambda(0)=1$, and $\Pi(t), N(t)^n$ can be defined by the following line integrals: 
\begin{equation} \label{line-integral}
\Pi(t) = \frac{1}{2i\pi} \oint_{\Gamma_{1,\kappa}} \big(z-Q(t)\big)^{-1}\, dz\ \ \ \mbox{and} \ \ \ N(t)^n = \frac{1}{2i\pi} \oint_{\Gamma_{0,\kappa}} z^n\, \big(z-Q(t)\big)^{-1}\, dz. 
\end{equation}
For any probability measure $\nu$ on $E$ such that $\nu(v) < \infty$ and for any $g\in\cB_{\vartheta_m}$, we define  
\begin{equation} \label{proof-L}
L_{\nu,g}(t) := \nu(\Pi(t)g) = \frac{1}{2i\pi} \oint_{\Gamma_{1,\kappa}} \nu\big((z-Q(t))^{-1}g\big)\, dz,
\end{equation}
\begin{equation} \label{proof-Rn}
\ \ \ R_{n,\nu,g}(t) := \nu(N(t)^ng) = \frac{1}{2i\pi} \oint_{\Gamma_{0,\kappa}} z^n\, \nu\big((z-Q(t))^{-1}g\big)\, dz.
\end{equation}
The previous quantities are well defined since $g\in\cB_{\vartheta_m}$ and $\nu\in \cB_1'\subset\cB_{\vartheta_m}'$. 
From (\ref{CF}) we have (\ref{X_n-S-n}) with $\forall t\in B(0,R_\kappa)$: 
\begin{equation} \label{L_Rn}
L(t) = L_{\mu,f}(t)\ \ \ \mbox{and}\ \ \ R_n(t) = R_{n,\mu,f}(t). 
\end{equation}
In particular we have $L(0) = \mu(\Pi f)= \pi(f)$. Besides it can be easily proved (see \cite[Sect.7]{fl} for details) that we have with possibly $R_\kappa$ reduced:
\begin{equation} \label{lamda}
\forall t\in B(0,R_\kappa),\ \lambda(t) = \frac{\pi(Q(t)1_E) - R_{1,\pi,1_E}(t)}{L_{\pi,1_E}(t)}. 
\end{equation}
Finally, from Lemma~\ref{cK-geo} and (\ref{line-integral}), there exists $C>0$ such that: $\forall t\in B(0,R_\kappa),\ R_n(t) \leq C\, \kappa^n$. 
Therefore, the series $\sum_{n\geq1}R_n(t)$ converge, and we have 
\begin{equation} \label{def-cR(t)}
\sum_{n\geq1} R_n(t) = \frac{1}{2i\pi}\, \oint_{\Gamma_{0,\kappa}} \frac{z}{1-z}\, \mu\big((z-Q(t))^{-1}f\big)\, dz.
\end{equation}
From (\ref{L_Rn}) (\ref{lamda}) (\ref{def-cR(t)}), the desired regularity properties in Hypothesis~$\cR(m)$-(i) will be fulfilled if we prove that $\pi(Q(\cdot)1_E)$, $\, \pi((z-Q(\cdot))^{-1}1_E)$, and $\mu((z-Q(\cdot))^{-1}f)$ are in $\cC_b^{m}(B(0,R),\C)$ for some $R>0$ and have uniform bounded derivatives in $z\in\cD_\kappa$ (thus in $z\in\Gamma_{0,\kappa}\cup\Gamma_{1,\kappa}$).  
Let $Q_k$ denote any kernel of the form $Q_{(p_1,\ldots,p_k)}$ defined in (\ref{deri-partial-kernel}). 
\begin{lem} \label{deri-geo} 
Let $0<\theta<\theta'\leq1$, let $k\in\N$ such that $0\leq k\leq \lfloor m\rfloor$, let $R>0$, and set $B=B(0,R)$. Then we have the following properties~:  \\[0.15cm]
(i) If $\tau'\in(0,1)$ and $\theta + \frac{k+\tau'}{\eta} \leq \theta'$, then $Q_k\in\cC_b^{\tau'}(B,\cL(\cB_\theta,\cB_{\theta'}))$,  \\[0.15cm]
(ii) If $k \leq \lfloor m\rfloor-1$ and $\theta + \frac{k+1}{\eta}  < \theta'$, then 
$Q_k\in\cC_b^1(B,\cL(\cB_\theta,\cB_{\theta'}))$. 
\end{lem}
\begin{proof}
Lemma~\ref{deri-geo} follows from (\ref{xi-geo-proof}): this is established in \cite[Lem.~10.4-5]{fl}. 
\end{proof}
To make easier the use of Lemma~\ref{deri-geo}, let us define for any fixed $u>0$: $T_u(\theta) := \theta + u/\eta$. 
Since $m = \lfloor m\rfloor +\tau$ and $\vartheta_m + m/\eta < 1$, one can choose $\tau'\in(\tau,1)$ and $\delta>0$ such that 
$$T_{\tau'}T_{1+\delta}^{\lfloor m\rfloor}(\vartheta_m) := \vartheta_m + \frac{\lfloor m\rfloor}{\eta} + \frac{\tau'}{\eta} + \frac{\lfloor m\rfloor\delta}{\eta} = 1.$$
Lemma~\ref{deri-geo} shows that $Q(\cdot)\in\cC_b^m(B,\cL(\cB_{\vartheta_m},\cB_1))$. Since $1_E\in\cB_{\vartheta_m}$ and $\pi\in \cB_1'$, one gets:  $\pi(Q(\cdot)1_E)\in\cC_b^m(B,\C)$. Lemma~\ref{derivation} below provides the above claimed regularity properties involving the resolvent $(z-Q(t))^{-1}$, and thus completes the proof of Proposition~\ref{prop-Rm(i)}. 
\end{proof}
\begin{lem} \label{derivation}
There exists $\kappa\in(\kappa_{\vartheta_m},1)$ and $R>0$ such that, for any probability measure $\nu$ on $E$ satisfying $\nu(v) < \infty$ and for any $g\in\cB_{\vartheta_m}$, the map $t\mapsto r_z(t) := \nu((z-Q(t))^{-1}g)$ is in $\cC_b^{m}(B(0,R),\C)$ for all $z\in\cD_\kappa$, and we have 
$$\forall \ell = 0,\ldots,\lfloor m\rfloor, \ \
\sup\big\{|r_z^{(\ell)}(t)|,\, z\in\cD_\kappa,\, t\in B(0,R)\,  \big\}<\infty,$$
where $r_z^{(\ell)}(\cdot)$ denotes any partial derivative of order $\ell$ of $r_z(\cdot)$.  Moreover,  if $m\notin\N$, then the $\tau$-Hölder coefficient of $r_z^{(\lfloor m\rfloor)}$on $B(0,R)$ is uniformly bounded in $z\in\cD_\kappa$. 
\end{lem}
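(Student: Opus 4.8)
The plan is to establish the desired regularity of the resolvent-based maps $r_z(t) = \nu((z-Q(t))^{-1}g)$ by differentiating the resolvent identity repeatedly and controlling each term through the gap in the scale of spaces $\{\cB_\theta\}$. The fundamental tool is the formula for the derivative of a resolvent: from $(z-Q(t))(z-Q(t))^{-1} = \id$ one obtains, by differentiating in the variable $t_p$,
$$\frac{\partial}{\partial t_p}(z-Q(t))^{-1} = (z-Q(t))^{-1}\, \frac{\partial Q}{\partial t_p}(t)\, (z-Q(t))^{-1}.$$
Iterating this (via the Leibniz rule) expresses any partial derivative of order $\ell \leq \lfloor m\rfloor$ of $(z-Q(t))^{-1}$ as a sum of products of the form $(z-Q(t))^{-1} Q_{k_1}(t) (z-Q(t))^{-1} Q_{k_2}(t)\cdots(z-Q(t))^{-1}$, where the $Q_{k_j}$ are the partial-derivative kernels from (\ref{deri-partial-kernel}) and the orders satisfy $\sum_j k_j = \ell$. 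First I would write out this expansion explicitly for a general order $\ell$.

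The key step is then to read off from this expansion the correct ladder of spaces. Each factor $Q_{k_j}(t)$ acts, by Lemma~\ref{deri-geo}, as a bounded operator from $\cB_\theta$ to $\cB_{\theta'}$ provided $\theta' - \theta \geq (k_j + \tau')/\eta$ (or the analogous $C^1$ statement in part (ii)), while each resolvent factor $(z-Q(t))^{-1}$ is bounded on a fixed space and, crucially, uniformly in $z\in\cD_\kappa$ and $t\in B(0,R_\kappa)$ by Lemma~\ref{cK-geo}. The strategy is to start in $\cB_{\vartheta_m}$ with $g$, absorb each differentiation of a kernel by a small increment $1/\eta$ (times $k_j + \tau'$ or $k_j+1$) in the smoothness index, and arrange that after all $\ell \leq \lfloor m\rfloor$ differentiations — plus the final $\tau$-Hölder increment on the top-order derivative — one lands inside $\cB_1$, where $\nu$ (satisfying $\nu(v)<\infty$, hence $\nu\in\cB_1'$) is a continuous linear form. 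This is exactly the budget computation already recorded before Lemma~\ref{derivation}: the choice of $\tau'\in(\tau,1)$ and $\delta>0$ with $T_{\tau'}T_{1+\delta}^{\lfloor m\rfloor}(\vartheta_m)=1$ guarantees that the cumulative index never exceeds $1$ along the chain. The uniform boundedness of the derivatives in $z$ and $t$ follows because every factor is uniformly bounded on $\cD_\kappa\times B(0,R)$.

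For the final claim — the uniform $\tau$-Hölder control of the top derivative $r_z^{(\lfloor m\rfloor)}$ when $m\notin\N$ — I would compare the order-$\lfloor m\rfloor$ expansion at two points $t$ and $t'$. The Hölder modulus is produced either by the $\tau'$-Hölder continuity of one kernel factor $Q_{k_j}$ (part (i) of Lemma~\ref{deri-geo}, with $\tau'>\tau$) or by a telescoping difference of resolvents, $(z-Q(t))^{-1}-(z-Q(t'))^{-1} = (z-Q(t))^{-1}(Q(t)-Q(t'))(z-Q(t'))^{-1}$, whose $C^0$ Hölder continuity again comes from Lemma~\ref{deri-geo}(i) applied with $k=0$; in each case the increment of the smoothness index is covered by the same budget, with the slack $\tau'-\tau$ and the $\delta$-room ensuring one stays within $\cB_1$.

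The main obstacle I anticipate is bookkeeping rather than conceptual: one must verify that along \emph{every} term of the Leibniz expansion, and for each intermediate product of factors, the partial sums $\vartheta_m + \frac{1}{\eta}\sum (k_j+\cdot)$ of the smoothness indices stay strictly below $1$ at each resolvent insertion and reach at most $1$ only at the final application of $\nu$. The delicate point is the interplay between the fractional increment $\tau'$ (needed for Hölder continuity of the highest-order kernel) and the integer increments: one cannot simply use $k_j/\eta$ but must reserve the extra $\tau'/\eta$ or $1/\eta$ demanded by Lemma~\ref{deri-geo}, which is precisely why the margin $\vartheta_m + m/\eta < 1$ (rather than $\leq 1$) is assumed. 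Once this accounting is laid out cleanly — most efficiently by induction on the differentiation order, carrying the current space index as an inductive parameter — the estimates are immediate consequences of Lemmas~\ref{cK-geo} and~\ref{deri-geo}, and this is essentially the computation referenced in \cite[Sec.~7]{fl}.
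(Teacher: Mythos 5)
Your plan is sound and captures the real content of the result, but it takes a different route from the paper. The paper does not carry out the resolvent-differentiation argument at all: its proof consists in defining the ladder of spaces $\cB^{\lfloor m\rfloor+1}=\cB_{\vartheta_m}\subset\cB^{\lfloor m\rfloor}\subset\cdots\subset\cB^0=\cB_1$ (with indices $T_{1+\delta}^{\lfloor m\rfloor+1-j}(\vartheta_m)$), checking via Lemmas~\ref{cK-geo} and~\ref{deri-geo} that the hypotheses of \cite[Th.~3.3]{seb-08} (equivalently \cite[Prop.~10.3]{fl} for $m\in\N$) are met, and citing that theorem to obtain a Taylor expansion of order $\lfloor m\rfloor$ of $r_z$ with remainder $O(|t-t_0|^m)$ uniform in $z$ and $t_0$; the passage from such uniform Taylor expansions to genuine $\cC_b^m$ regularity is then delegated to a lemma of Campanato \cite{camp}. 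You instead propose to reconstruct the inside of those citations: iterate $\partial_{t_p}(z-Q(t))^{-1}=(z-Q(t))^{-1}(\partial_{t_p}Q)(t)(z-Q(t))^{-1}$, track the smoothness index along each Leibniz term, and handle the fractional part by a telescoping estimate rather than by Campanato's lemma. What your route buys is a self-contained argument; what the paper's route buys is that the genuinely delicate analytic step is already packaged in the references.

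That delicate step is the one place where your sketch is thinner than it should be. In this weak-norm setting the resolvent-derivative formula is not obtained by ``differentiating the resolvent identity'' in a fixed $\cL(\cB_\theta)$: since $Q(\cdot)$ is only differentiable as a map into $\cL(\cB_\theta,\cB_{\theta'})$ with $\theta<\theta'$, proving that the candidate formula actually \emph{is} the derivative requires, besides the $(k_j+\tau')/\eta$ increment consumed by each kernel $Q_{k_j}$, an additional increment for the continuity in $t$ of the outer resolvent factor $(z-Q(t))^{-1}$ in a norm $\cL(\cB_{\theta'},\cB_{\theta''})$ (Keller--Liverani gives this only with a further gap). This is precisely what the extra $\delta$ in $T_{1+\delta}$ is reserved for, and you do invoke the budget $T_{\tau'}T_{1+\delta}^{\lfloor m\rfloor}(\vartheta_m)=1$, so the accounting closes; but a complete write-up must make the $o(h)$ estimate on the first-order remainder explicit at each rung of the ladder, which is exactly the computation of \cite[Sec.~7]{fl} and \cite[Th.~3.3]{seb-08}.
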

\begin{proof} 
On the basis of Lemmas~\ref{cK-geo}-\ref{deri-geo}, Lemma~\ref{derivation} is established in \cite[Prop.~10.3]{fl} in the case $m\in\N$. The case $m\notin\N$ can be obtained by slightly extending the method of \cite{fl}. It can be also deduced from \cite[Th.~3.3]{seb-08} which specifies and generalizes the Taylor expansions obtained in \cite{aap,gouliv}. Let us verify that the assumptions of \cite[Th.~3.3]{seb-08} are fulfilled. Define the spaces $\cB^{\lfloor m\rfloor+1}:= \cB_{\vartheta_m}$, $\, \cB^0 = \cB_{T_{\tau'}T_{1+\delta}^{\lfloor m\rfloor}(\vartheta_m)} = \cB_1$ and 
$$\forall j=1,\ldots,\lfloor m\rfloor,\ \ \ \ \cB^{j} := \cB_{T_{1+\delta}^{\lfloor m\rfloor+1-j}(\vartheta_m)}.$$
Note that $\cB^{\lfloor m\rfloor+1} \subset \cB^{\lfloor m\rfloor} \subset \ldots \subset \cB^1 \subset \cB^0$. Then Lemmas~\ref{cK-geo}-\ref{deri-geo} and \cite[Th.~3.3]{seb-08} show that there exist $\kappa\in(\kappa_{\vartheta_m},1)$ and $R>0$ such that $r_z(t)$ admits a Taylor expansion of order $\lfloor m\rfloor$ at any point $t_0\in B(0,R)$, with a remainder $O(|t-t_0|^m)$ which is uniform in $z\in\cD_\kappa$ and $t_0\in B(0,R)$. As already observed in \cite[Rk.~3.6]{seb-08}, the passage to the $m$-differentiability properties of $r_z(\cdot)$ can be deduced from a general lemma in \cite{camp}. 
\end{proof} 

\begin{pro} \label{prop-Rm(ii)}
Assume that $\xi$ is nonlatice and satisfies Condition~(\ref{xi-geo-proof}), that $\mu(v)<\infty$, and that $f\in\cB_{\vartheta_m}$, where $\vartheta_m$ is defined as in Proposition~\ref{prop-Rm(i)}. Then Hypothesis~$\cR(m)$-(ii) is fulfilled. 
\end{pro}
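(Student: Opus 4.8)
The plan is to start from Formula~(\ref{CF}), which gives $\E_\mu[f(X_n)\, e^{i\langle t,S_n\rangle}] = \mu(Q(t)^n f)$, and to control the Fourier operators $Q(t)$ uniformly for $t$ in the compact annulus $\overline{K_{r,r'}}$. The heart of the argument is the uniform spectral gap away from the origin: for the compact set $K=\overline{K_{r,r'}}\subset\R^d\setminus\{0\}$ there exist $\rho<1$ and $C>0$ with $\sup_{t\in K}\|Q(t)^n\|_{\vartheta_m}\leq C\,\rho^n$ for all $n\geq1$. First I would fix a single $t\neq0$: since $Q$ is quasi-compact on $\cB_1$ (with essential spectral radius $<1$) and $|Q(t)g|\leq Q|g|$ pointwise, $Q(t)$ inherits an essential spectral radius $<1$, so any spectral value of modulus $1$ is an eigenvalue. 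An eigenrelation $Q(t)h=e^{i\alpha}h$ would, after normalising $h=e^{i\theta}$, yield a bounded measurable $\theta:E\r\R^d$ and a coset relation $\xi(y)+\theta(y)-\theta(x)\in b+H$ holding $Q(x,dy)$-a.s.\ on a $\pi$-full $Q$-absorbing set, contradicting the nonlattice hypothesis; hence the spectral radius of $Q(t)$ is $<1$ for every $t\neq0$. Passing from this pointwise statement to the uniform bound on $K$ would then be carried out by the Keller--Liverani theorem applied locally around each $t_0\in K$, using the regularity of $Q(\cdot)$ from Lemma~\ref{deri-geo}, together with a compactness covering argument, exactly along the lines available in \cite[$\S$~4]{fl}.

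Granted the uniform gap, I would fix $\kappa'\in(\rho,1)$ and let $\Gamma$ be the circle $|z|=\kappa'$, which encloses the whole spectrum of every $Q(t)$, $t\in\overline{K_{r,r'}}$, while avoiding $z=1$. Since $|z|=\kappa'<1$ on $\Gamma$, the geometric series $\sum_{n\geq1}z^n = z/(1-z)$ converges uniformly there, so the Dunford identity $Q(t)^n=\frac{1}{2i\pi}\oint_\Gamma z^n(z-Q(t))^{-1}\,dz$ can be summed to give
$$\sum_{n\geq1}\mu\big(Q(t)^n f\big)=\frac{1}{2i\pi}\oint_\Gamma \frac{z}{1-z}\,\mu\big((z-Q(t))^{-1}f\big)\,dz,$$
the interchange of sum and integral being justified by the uniform bound $\sup\{\|(z-Q(t))^{-1}\|_{\vartheta_m}:z\in\Gamma,\,t\in\overline{K_{r,r'}}\}<\infty$ supplied by the previous step. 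The uniform convergence of $\sum_{n\geq1}\E_\mu[f(X_n)e^{i\langle t,S_n\rangle}]$ on $K_{r,r'}$ is then immediate, since $|\mu(Q(t)^n f)|\leq \mu(v)\,\|f\|_{\vartheta_m}\,\|Q(t)^n\|_{\vartheta_m}=O(\rho^n)$ uniformly in $t$.

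It remains to obtain the $\cC_b^m$-regularity in $t$, and here I would prove the annulus analogue of Lemma~\ref{derivation}: the map $t\mapsto r_z(t):=\mu((z-Q(t))^{-1}f)$ belongs to $\cC_b^m(K_{r,r'},\C)$, with all partial derivatives up to order $\lfloor m\rfloor$ bounded and the $\tau$-H\"older coefficient of $r_z^{(\lfloor m\rfloor)}$ bounded, uniformly in $z\in\Gamma$. This is established just as in the proof of Lemma~\ref{derivation}, resting on Lemmas~\ref{cK-geo}--\ref{deri-geo} and \cite[Th.~3.3]{seb-08}: the same chain of spaces $\cB^{\lfloor m\rfloor+1}\subset\cdots\subset\cB^0$ and the same interpolation through the gap in the exponents $\theta$ are used, the only change being that the contour $\cD_\kappa$ around $z=0,1$ is replaced by the single circle $\Gamma$, on which the uniform resolvent bounds hold. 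Integrating the uniformly $\cC_b^m$ family $r_z(\cdot)$ against $z/(1-z)$ over the fixed compact contour $\Gamma$ preserves $\cC_b^m$-regularity, which delivers Hypothesis~$\cR(m)$-(ii).

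The step I expect to be the genuine obstacle is the passage from the pointwise spectral gap (one $t\neq0$ at a time, which the nonlattice condition delivers directly) to the \emph{uniform} gap on the whole compact annulus. Away from a neighbourhood of $t=0$ there is no dominant-eigenvalue expansion to lean on, so the Keller--Liverani stability combined with a covering argument is indispensable, and checking that its hypotheses hold uniformly as $t_0$ ranges over $\overline{K_{r,r'}}$ is where the real work lies; everything downstream (the resolvent summation and the transfer of regularity under the contour integral) is then routine given the machinery already set up for Proposition~\ref{prop-Rm(i)}.
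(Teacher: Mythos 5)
Your proposal is correct and follows essentially the same route as the paper: the uniform spectral gap on the annulus (the paper's Lemma~\ref{cS-geo}, quoted from \cite[Lem.~10.1, Prop.~5.4]{fl}, whose proof you sketch rather than cite), the summed resolvent contour integral $\cE(t)=\frac{1}{2i\pi}\oint_\Gamma\frac{z}{1-z}\mu((z-Q(t))^{-1}f)\,dz$, and the transfer of $\cC_b^m$-regularity via the same chain of spaces as in Lemma~\ref{derivation}. The only detail worth making explicit is that, as in the paper's choice $\rho=\max\{\rho_1,\rho_{\theta_0},\ldots,\rho_{\theta_{\lfloor m\rfloor}}\}$, the circle $\Gamma$ must lie outside the spectral radius of $Q(t)$ on \emph{every} space $\cB^j$ of the chain simultaneously, not just on $\cB_{\vartheta_m}$, so that the uniform resolvent bounds needed for the regularity step hold on each of them.
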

\begin{proof}
Let $0<r<r'$. The nonlattice assumption yields the following result (see \cite[Lem.~10.1, Prop.~5.4]{fl}): 
\begin{lem} \label{cS-geo} 
For any $\theta\in(0,1]$, there exists $\rho_\theta<1$ 
such that: $\sup_{t\in K_{r,r'}}\|Q(t)^n\|_{\theta} = O(\rho_\theta^n)$, where $\|\cdot\|_{\theta}$ stands here for the operator norm on $\cB_\theta$. 
\end{lem}
Lemma~\ref{cS-geo} and (\ref{CF}) imply that the series $\cE(t) := \sum_{n\geq1}\E_\mu[f(X_n)\, e^{i\langle t, S_n\rangle}]$ uniformly converge on $K_{r,r'}$ (use $f\in\cB_{\vartheta_m}$ and $\mu\in  \cB_1'\subset\cB_{\vartheta_m}'$). Setting $\theta_j:= T_{1+\delta}^{\lfloor m\rfloor-j}(\vartheta_m)$ for $j=0,\ldots,\lfloor m\rfloor$, we define $\rho = \max\{\rho_1,\rho_{\theta_0},\ldots,\rho_{\theta_{\lfloor m\rfloor}}\}$. Let $\rho_0\in(\rho,1)$, and let $\Gamma$ denote the oriented circle centered at $z=0$, with radius $\rho_0$. Since $Q(t)^n = \frac{1}{2i\pi} \oint_{\Gamma} z^n\big(z-Q(t)\big)^{-1}dz$ in $\cL(\cB_{\vartheta_m})$ for all $t\in K_{r,r'}$, we have  
$$\cE(t) = \sum_{n\geq1} \mu\big(Q(t)^nf\big) = \frac{1}{2i\pi} \oint_{\Gamma} \frac{z}{1-z}\, \mu\big((z-Q(t))^{-1}f\big)\, dz.$$
The desired regularity of $\cE(\cdot)$ 
can be then deduced as in the proof of Proposition~\ref{prop-Rm(i)}. 
\end{proof}
\begin{rem} \label{rem-deri-hess} 
If $\xi$ is $\pi$-centered and satisfies (\ref{xi-geo-proof}) with $\eta > 2$, then we have 
$\nabla\lambda(0) = 0$. If $\mu(v)<\infty$, then we have: $-\, Hess\, \lambda(0) = \lim_n\frac{1}{n}\, \E_\mu[S_nS_n^*]$. Finally, if $\xi$ is  nonlattice, then $\Sigma := - Hess\, \lambda(0)$ is positive definite. 
\end{rem}
\noindent{\it Proof of Remark~\ref{rem-deri-hess}.} Assume that $\mu(v)<\infty$. We have $\E_\mu[\|S_n\|^2] < \infty$  (use (\ref{xi-geo-proof})). From (\ref{proof-Rn}) and Lemma~\ref{derivation}, it follows that $\sup_{n\geq1}|R_{n,\mu,1_E}^{(\ell)}(0)| < \infty$ for $\ell=1,2$.  
By Proposition~\ref{deri-hess}(i), we get: $\nabla\lambda(0)/i = \lim_n\frac{1}{n}\, \E_\mu[S_n]$. When applied with $\mu=\pi$, this gives $\nabla\lambda(0) = 0$ since $\xi$ is $\pi$-centered. The second point of Remark~\ref{rem-deri-hess} follows from Proposition~\ref{deri-hess}(ii), and the last assertion is established in \cite[Sect.~5.2]{fl}. 
\fdem
\indent The following remarks also apply to the examples of the two next subsections.  
\begin{rem} \label{delta0}
Even in case $d\geq5$ for which Hypothesis $\cR(m_d)$ is needed in Theorem~\ref{ren-theo} with $m_d=d-2$, the above use of \cite[Th.~3.3]{seb-08} (or \cite[Sect.~7]{fl}) does not allow to prove (\ref{ren-rate-theo}) under the moment assumption $\|\xi\|^{m_d}\leq C\, v$. This is due to the fact that, when $\xi$ is unbounded, the property $Q\in\cC^\varepsilon(\R^d,\cL(\cB_\theta))$ is not fulfilled in general and must be replaced in the derivation procedure (of both \cite{fl,seb-08}) by $Q\in\cC^\varepsilon(\R^d,\cL(\cB_\theta,\cB_{\theta'}))$ with $\theta<\theta'$. This yields a "gap" between the spaces $\cB_{\vartheta_m}$ and $\cB_v$ (used in Lemma~\ref{derivation}) which is slightly bigger than the expected one. This is the reason why the order in (\ref{xi-geo}) is $m_d+\delta_0$ with some arbitrary small $\delta_0>0$. 
\end{rem}
\begin{rem} \label{exten-cond-G}
Property~(\ref{ren-rate-theo}) could be investigated in the general setting of the sequences $(X_n,S_n)_{n\geq0}$ satisfying Condition ($\cG$) of Subsection~\ref{subsect_semi_group}, provided that the driving Markov chain $(X_n)_{n\geq0}$ is $v$-geometrically ergodic. The moment conditions then focus on $(S_1,X_1)$. 
\end{rem}
\subsection{Applications to the $\rho$-mixing  Markov chains} \label{sub_sec_L2}
Let us assume that the $\sigma$-field $\cE$ is countably generated, and that $(X_n)_{n\geq0}$ is a $\rho$-mixing Markov chain. Equivalently (see \cite{rosen}), $(X_n)_{n\geq0}$ possesses a stationary distribution $\pi$ and satisfies the strong ergodicity condition (\ref{K1}) on the usual Lebesgue space $\L^2(\pi)$. For instance, this condition is fulfilled when $(X_n)_{n\geq0}$ is uniformly ergodic (i.e.~satisfies (\ref{K1}) on the space $\cB^{^\infty}$of bounded measurable complex-valued functions on $E$, or equivalently $(X_n)_{n\geq0}$ is aperiodic, ergodic, and satisfies the so-called Doeblin condition). 
\noindent Let us first present a statement in the stationary case. 
\begin{cor} \label{unif-ren} 
If $\xi : E\r\R^d\, $ ($d\geq 3$) is a $\pi$-centered nonlattice functional such that  
\begin{equation} \label{xi-unif}
\exists\, \delta_0>0,\ \ \pi\big(\|\xi\|^{m_d+\delta_0}\big) < \infty,
\end{equation} 
then we have (\ref{ren-intro}) in the stationary case ($\mu=\pi$) with $\Sigma$ defined by (\ref{sigma}).
\end{cor}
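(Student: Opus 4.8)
The plan is to deduce Corollary~\ref{unif-ren} from Theorem~\ref{ren-theo} by verifying, in the stationary case $\mu=\pi$, that Hypothesis~$\cR(m)$ holds for $m=\max\{d-2,2+\varepsilon\}$ together with the centering $\nabla\lambda(0)=0$ and positive-definiteness of $\Sigma$. The natural Banach-space scale here, paralleling the $v$-geometric case, is the family of interpolation spaces $\L^{p}(\pi)$ for $p\in[1,2]$ (or more precisely $\cB_\theta=\L^{p_\theta}(\pi)$ for a suitable parametrization by $\theta$), which replaces the weighted spaces $\cB_\theta$ of Subsection~\ref{sub_sec_v_geo}. The moment hypothesis $\pi(\|\xi\|^{m_d+\delta_0})<\infty$ plays exactly the role that (\ref{xi-geo-proof}) played before: it controls the H\"older and differentiability regularity of the Fourier kernels $Q(t)$ acting between two different spaces of the scale, producing the crucial gap that the weak Nagaev method exploits.

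First I would record that $\rho$-mixing is equivalent to the strong ergodicity~(\ref{K1}) on $\L^2(\pi)$, and that this extends to the whole scale $\L^p(\pi)$ by interpolation, giving the analogue of Lemma~\ref{cK-geo}: the resolvents $(z-Q(t))^{-1}$ are uniformly bounded on each $\cB_\theta$ for $z\in\cD_\kappa$ and $t$ near $0$, via the Keller--Liverani theorem. Next I would establish the key regularity lemma, the analogue of Lemma~\ref{deri-geo}: using (\ref{deri-partial-kernel}), a partial derivative of order $k$ of $Q(t)$ is an integral operator with kernel weighted by $\prod_{s}\xi_{p_s}(y)$, and H\"older's inequality shows that this operator is bounded (and $\tau'$-H\"older) from $\L^{p}(\pi)$ to $\L^{q}(\pi)$ with $q<p$, provided $\pi(\|\xi\|^{k+\tau'}\cdot)$ is finite against the appropriate conjugate exponent --- which is guaranteed precisely by (\ref{xi-unif}). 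This is where the order $m_d+\delta_0$ enters: $k$ ranges up to $\lfloor m\rfloor\le d-2$, and the extra $\delta_0$ supplies room both for the H\"older exponent and for the gap between successive spaces in the scale.

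Granting these two lemmas, the derivation of Hypothesis~$\cR(m)$-(i) and (ii) is then formally identical to the $v$-geometric case. For part~(i) I would use the spectral decomposition (\ref{Q_l_L_N}), define $L(t)$, $R_n(t)$ by the line integrals (\ref{line-integral})--(\ref{proof-Rn}) with $\nu=\mu=\pi$, read off $L(0)=\pi(\Pi f)=\pi(f)=1$ (here $f=1_E$), and invoke the resolvent-regularity statement (the analogue of Lemma~\ref{derivation}, itself an application of \cite[Th.~3.3]{seb-08} to the $\L^p$-scale) to conclude $\lambda(\cdot),L(\cdot),\sum_{n\ge1}R_n(\cdot)\in\cC_b^m(B(0,R),\C)$. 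For part~(ii) the nonlattice condition yields spectral-radius control $\sup_{t\in K_{r,r'}}\|Q(t)^n\|_\theta=O(\rho_\theta^n)$ (the analogue of Lemma~\ref{cS-geo}), whence $\sum_{n\ge1}E_n(\cdot)$ converges uniformly on $K_{r,r'}$ to a $\cC_b^m$ function via the same contour-integral argument. Finally, the centering $\nabla\lambda(0)=0$ and $\Sigma=-Hess\,\lambda(0)=\lim_n\frac1n\E_\pi[S_nS_n^*]>0$ follow from Proposition~\ref{deri-hess} and the nonlattice hypothesis exactly as in Remark~\ref{rem-deri-hess}.

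The main obstacle I anticipate is the regularity lemma for the Fourier kernels on the $\L^p$-scale. In the $v$-geometric setting the weighted norms $\|\cdot\|_\theta$ interact transparently with the pointwise bound $\|\xi\|^\eta\le Cv$; here one must instead juggle $\L^p\to\L^q$ boundedness of the derivative kernels under the \emph{integrated} moment condition (\ref{xi-unif}), which requires a careful choice of the exponents $p_\theta$ along the scale and a H\"older-inequality bookkeeping to ensure that the total moment order consumed, $\lfloor m\rfloor+\tau'$ plus the gap increments, never exceeds $m_d+\delta_0$. Once the exponents are correctly matched so that each differentiation costs slightly less than $\delta_0/(d-2)$ in integrability, the remaining verifications are routine transcriptions of \cite{fl} from the weighted-supremum scale to the Lebesgue scale.
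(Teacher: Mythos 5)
Your proposal is correct and follows essentially the same route as the paper: the paper deduces Corollary~\ref{unif-ren} as the special case $\mu=\pi$, $f=1_E$ of Corollary~\ref{unif-ren-gene}, which is itself proved (Proposition~\ref{prop-Rm-Lp}) by running the weak Nagaev/Keller--Liverani machinery of Subsection~\ref{sub_sec_v_geo} on the scale $\cB_\theta=\L^\theta(\pi)$, with the derivative kernels $Q_{(p_1,\ldots,p_k)}(t)$ mapping $\L^\theta$ into $\L^{T_u(\theta)}$, $T_u(\theta)=\eta\theta/(\eta+u\theta)$, exactly by the H\"older-inequality bookkeeping you describe. The only cosmetic discrepancy is your range $p\in[1,2]$ for the Lebesgue scale: the paper works on an interval $[r,s]$ with $r$ close to $1$ and $s$ large, but this does not affect the argument.
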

In comparison with the i.i.d.~case, the moment condition (\ref{xi-unif}) is the expected one (up to $\delta_0>0$). Corollary~\ref{unif-ren} is a special case of the next one. 
We denote by $\L^p(\pi)$, $1\leq p\leq\infty$, the usual Lebesgue space associated to $\pi$. 
\begin{cor} \label{unif-ren-gene} 
Assume that $\xi$ satisfies the assumptions of Corollary~\ref{unif-ren} 
and that the initial distribution $\mu$ is of the form 
$\mu = \phi\, d\pi$, where $\phi\in\L^{r'}(\pi)$ for some $r'>\frac{\eta}{\eta-m}$, where $\eta = m_d+\delta_0$,  $m\in(2,2+\delta_0)$ if $d=3,4$, and $m=d-2$ if $d\geq 5$. If $s>\frac{\eta}{\eta-m}$ and $\frac{\eta s}{\eta+ms} > \frac{r'}{r'-1}$, then, for any $f\in\L^s(\pi)$, $f\geq 0$, we have (\ref{ren-rate-theo}) 
with $L(0) = \pi(f)$ and $\Sigma$ defined by (\ref{sigma}).  
\end{cor}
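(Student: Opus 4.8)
The plan is to deduce Corollary~\ref{unif-ren-gene} from Theorem~\ref{ren-theo} by verifying Hypothesis~$\cR(m)$ in the $\rho$-mixing setting, exactly as was done for the $v$-geometric case in Subsection~\ref{sub_sec_v_geo}. The role played there by the scale $\{\cB_\theta = \cB_{v^\theta}\}$ will here be played by the Lebesgue scale $\{\L^p(\pi)\}_{1\leq p\leq\infty}$, which is totally ordered by inclusion in the reverse sense. First I would record the analogues of Lemmas~\ref{cK-geo}, \ref{deri-geo}, \ref{cS-geo}: strong ergodicity (\ref{K1}) holds on $\L^2(\pi)$ by the $\rho$-mixing hypothesis, and one checks that $Q(t) = \Pi + \text{perturbation}$ satisfies the Keller--Liverani conditions on the relevant $\L^p(\pi)$ spaces, together with uniform control of the resolvent $(z-Q(t))^{-1}$ on $\cD_\kappa \times B(0,R_\kappa)$. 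The key differentiability statement is the counterpart of Lemma~\ref{deri-geo}: the derivative kernels (\ref{deri-partial-kernel}) carry the factor $\prod_{s}\xi_{p_s}$, so by H\"older's inequality the operator $Q_k$ maps $\L^{p}(\pi)$ into $\L^{q}(\pi)$ continuously (and $\tau'$-h\"olderly in $t$) as soon as the exponents satisfy $\frac{1}{q} \geq \frac{1}{p} + \frac{k+\tau'}{\eta}$, where $\eta = m_d+\delta_0$ is supplied by the moment condition (\ref{xi-unif}). This is precisely the $\L^p$-translation of the condition $\theta + \frac{k+\tau'}{\eta}\leq\theta'$ appearing there, the correspondence being $\theta \leftrightarrow 1/p$.

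Granting these lemmas, the verification of Hypothesis~$\cR(m)$ runs in parallel with Propositions~\ref{prop-Rm(i)} and \ref{prop-Rm(ii)}. For part~(i), I would fix $m$ as in the statement ($m\in(2,2+\delta_0)$ if $d=3,4$, and $m=d-2$ if $d\geq5$) and use the line-integral representations (\ref{line-integral})--(\ref{def-cR(t)}) to define $\lambda(t)$, $L(t)=L_{\mu,f}(t)$ and $R_n(t)=R_{n,\mu,f}(t)$, then appeal to the $\L^p$ analogue of Lemma~\ref{derivation} to obtain $L,\sum_n R_n \in \cC_b^m(B(0,R),\C)$ with $L(0)=\pi(f)$. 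For part~(ii), Lemma~\ref{cS-geo}'s analogue gives the spectral-radius bound $\sup_{K_{r,r'}}\|Q(t)^n\|=O(\rho^n)$ away from the origin (this is where the nonlattice hypothesis enters), whence the uniform convergence and $\cC_b^m$-regularity of $\sum_n E_n(\cdot)$ on annuli. The centering $\nabla\lambda(0)=0$, the identity $\Sigma = \lim_n \frac1n\E_\pi[S_nS_n^*]$, and the positive-definiteness of $\Sigma$ follow from the $\rho$-mixing analogue of Remark~\ref{rem-deri-hess}, using Proposition~\ref{deri-hess} and $\pi(\|\xi\|^{m_d+\delta_0})<\infty$ with $\eta>2$.

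The one genuinely new ingredient, and the step I expect to be the main obstacle, is the precise bookkeeping of the Lebesgue exponents so that the arithmetic conditions on $r'$ and $s$ in the statement come out exactly. The initial distribution is $\mu = \phi\,d\pi$ with $\phi\in\L^{r'}(\pi)$, so the linear form $\mu$ acts continuously not on all of $(\L^p)'$ but only on $\L^{p}(\pi)$ with $p \geq \frac{r'}{r'-1}$ (the H\"older conjugate of $r'$); likewise $f\in\L^s(\pi)$ sits in the space $\cB = \L^{s}(\pi)$, playing the role of the small space $\cB_{\vartheta_m}$. Running the derivation procedure requires a chain of embeddings $\cB = \L^{s}(\pi) \subset \cdots \subset \L^{r'/(r'-1)}(\pi)$ of total ``length'' $m$, i.e.\ the composite H\"older loss $\frac{m}{\eta}$ incurred across the $\lfloor m\rfloor{+}1$ steps of Lemma~\ref{derivation} must fit between $1/s$ and $1-1/r'$. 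Translating $\theta\mapsto 1/p$, the admissibility $\vartheta_m + m/\eta<1$ becomes $\frac1s + \frac{m}{\eta} < 1-\frac1{r'}$, which rearranges to the stated $\frac{\eta s}{\eta+ms} > \frac{r'}{r'-1}$; the conditions $r',s>\frac{\eta}{\eta-m}$ simply guarantee that the endpoint exponents exceed $2$ so that $\L^2(\pi)$—where (\ref{K1}) is known—lies inside the chain. I would therefore devote the care of the proof to checking that these exponent inequalities are exactly what the composition $T_{\tau'}T_{1+\delta}^{\lfloor m\rfloor}$ of Subsection~\ref{sub_sec_v_geo} demands under the substitution $\theta\leftrightarrow 1/p$, and otherwise cite \cite{fl} and \cite[Th.~3.3]{seb-08} for the resolvent regularity as before.
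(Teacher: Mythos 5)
Your proposal is correct and follows essentially the same route as the paper: the paper reduces the corollary to Proposition~\ref{prop-Rm-Lp}, proved by rerunning the machinery of Subsection~\ref{sub_sec_v_geo} on the scale $\{\L^\theta(\pi)\}_{\theta\in(1,\infty)}$ with $T_u(\theta)=\eta\theta/(\eta+u\theta)$, which is exactly your $\theta\leftrightarrow 1/p$ bookkeeping, and your derivation of $\frac{\eta s}{\eta+ms}>\frac{r'}{r'-1}$ from $\frac1s+\frac m\eta<\frac1r$ matches the paper's choice $r\leq T_{\tau'}T_1^{\lfloor m\rfloor}(s)$. The only slight inaccuracy is your reading of $r',s>\frac{\eta}{\eta-m}$ as placing $\L^2(\pi)$ inside the chain: strong ergodicity extends from $\L^2(\pi)$ to every $\L^p(\pi)$ with $1<p<\infty$ (via \cite{rosen} and \cite[Prop.~4.1]{fl}), and those conditions merely keep all exponents of the chain strictly above $1$.
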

\noindent Given $r'>\eta/(\eta-m)$, the two last conditions on $s$ are  fulfilled if $s$ is sufficiently large. Indeed, let us set $r=r'/(r'-1)$ (ie.~$1/r+1/r' = 1$). Since $1/r'<1-m/\eta$, we have $1<r<\eta/m$, and since 
$\frac{\eta s}{\eta+ms}\nearrow\frac{\eta}{m}$ when $s\r+\infty$, we have $\eta s/(\eta+ms) > r$ when $s$ is large enough. 

\noindent Thanks to Theorem~\ref{ren-theo}, Corollary~\ref{unif-ren-gene}  is a consequence of the next Proposition~\ref{prop-Rm-Lp}. 
Let us consider any real numbers $m, \eta$ such that $0<m<\eta$, set  $\tau = m- \lfloor m\rfloor$, and assume that  $\xi : E\r\R^d$ is a measurable function such that: 
\begin{equation} \label{xi-unif-proof}
\pi\big(\|\xi\|^{\eta}\big) < \infty. 
\end{equation} 
Let $r'>\frac{\eta}{\eta-m}$,  $r : =\frac{r'}{r'-1}$, and let $s$ be such that $s>\frac{\eta}{\eta-m}$ and $\frac{\eta s}{\eta+ms} > r$.  Note that $r<s$. 
\begin{pro} \label{prop-Rm-Lp}
Assume that we have (\ref{xi-unif-proof}), that $\mu = \phi\, d\pi$ with $\phi\in\L^{r'}(\pi)$, and that $f\in\L^s(\pi)$, with $r'$ and $s$ above defined. Then Hypothesis $\cR(m)$-(i) holds true with $L(0) = \pi(f)$. If in addition  $\xi$ is nonlatice, then Hypothesis~$\cR(m)$-(ii) is fulfilled. 
\end{pro}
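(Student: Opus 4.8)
The plan is to mirror the structure of the $v$-geometrically ergodic case (Propositions~\ref{prop-Rm(i)}-\ref{prop-Rm(ii)}) but to replace the weighted supremum spaces $\cB_\theta$ with a chain of Lebesgue spaces $\L^p(\pi)$. The key observation is that the moment condition (\ref{xi-unif-proof}) together with H\"older's inequality yields the crucial ``gap'' regularity: the multiplication by $\xi_{p_1}\cdots\xi_{p_k}$ (appearing in the derivative kernels (\ref{deri-partial-kernel})) sends $\L^{p}(\pi)$ continuously into $\L^{q}(\pi)$ whenever $\frac1q \geq \frac1p + \frac{k}{\eta}$, since $\|\xi\|^\eta\in\L^1(\pi)$. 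This is the exact analogue of Lemma~\ref{deri-geo}, with the exponent bookkeeping $\theta + (k+\tau')/\eta \leq \theta'$ translated into the H\"older relation on the reciprocals $1/p$. First I would record this as a lemma producing $Q_k\in\cC_b^{\tau'}\big(B,\cL(\L^{p},\L^{q})\big)$ and $Q_k\in\cC_b^1\big(B,\cL(\L^{p},\L^{q})\big)$ under the corresponding inequalities, relying on the $\rho$-mixing hypothesis (i.e.~strong ergodicity (\ref{K1}) on $\L^2(\pi)$) to interpolate (\ref{K1}) across the needed $\L^p(\pi)$ spaces; this replaces the Jensen-inequality argument of Lemma~\ref{cK-geo}.

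Having established the Keller-Liverani framework on this scale of Lebesgue spaces, the proof of part~(i) proceeds verbatim as in Proposition~\ref{prop-Rm(i)}. I would set $\cB^{\lfloor m\rfloor+1} = \L^s(\pi)$ at the top of the chain and $\cB^0 = \L^{r}(\pi)$ at the bottom (so that the initial distribution $\mu=\phi\,d\pi$ with $\phi\in\L^{r'}(\pi)$ acts as a continuous linear form via $g\mapsto\pi(\phi g)$, by H\"older with $1/r+1/r'=1$), and insert $\lfloor m\rfloor$ intermediate spaces $\L^{p_j}(\pi)$ spaced by $1/\eta$ plus a small slack $\delta$. The conditions $s>\eta/(\eta-m)$ and $\eta s/(\eta+ms)>r$ are precisely what guarantees that $\frac1s + \frac{m}{\eta}<1$ and that after climbing down $m$ orders of differentiation one still sits above $1/r$, so the whole resolvent chain fits between $\L^s(\pi)$ and $\L^{r}(\pi)\supset(\L^{r'})'$. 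Then the line-integral representations (\ref{line-integral})-(\ref{def-cR(t)}), the formula (\ref{lamda}) for $\lambda(\cdot)$, and the invocation of \cite[Th.~3.3]{seb-08} (exactly as in Lemma~\ref{derivation}) give the $\cC_b^m$-regularity of $L(\cdot)$, $\lambda(\cdot)$ and $\sum_{n\geq1}R_n(\cdot)$, with $L(0)=\mu(\Pi f)=\pi(\phi)\,\pi(f)=\pi(f)$ since $\pi(\phi)=1$.

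For part~(ii), the nonlattice assumption gives the spectral-gap estimate $\sup_{t\in K_{r,r'}}\|Q(t)^n\|_{\L^p(\pi)}=O(\rho^n)$ away from the origin, the $\L^p$-analogue of Lemma~\ref{cS-geo}; this follows from the $\rho$-mixing setting and the nonlattice condition as in \cite{fl}. The uniform convergence of $\cE(t)=\sum_{n\geq1}\E_\mu[f(X_n)e^{i\langle t,S_n\rangle}]$ on $K_{r,r'}$ and its $\cC_b^m$-regularity are then obtained through the same contour-integral argument used at the end of Proposition~\ref{prop-Rm(ii)}, over the circle $\Gamma$ of radius $\rho_0\in(\rho,1)$.

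I expect the main obstacle to be the delicate exponent arithmetic tying together $r'$, $s$, $m$ and $\eta$: one must verify that a single admissible chain of Lebesgue exponents simultaneously accommodates the continuity of $\mu$ as a linear form (the constraint from $r'$), the membership of $f$ (the constraint from $s$), and the $\lfloor m\rfloor+1$ successive $1/\eta$-drops demanded by Lemma~\ref{deri-geo}'s $\L^p$-version, \emph{with strict inequalities} so that a positive slack $\delta$ can be inserted. The conditions $s>\eta/(\eta-m)$ and $\eta s/(\eta+ms)>r$ are tailored precisely for this; checking that they are not merely necessary but sufficient to build the chain $\L^s\subset\L^{p_{\lfloor m\rfloor}}\subset\cdots\subset\L^{p_1}\subset\L^{r}$ is the crux. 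A secondary technical point is justifying the differentiation of the Fourier kernels (\ref{fourier-kernels}) as bounded operators between distinct $\L^p(\pi)$ spaces, which rests entirely on $\|\xi\|^\eta\in\L^1(\pi)$ and H\"older's inequality; this should be routine once the index relations are in place.
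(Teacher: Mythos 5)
Your proposal matches the paper's proof in all essentials: the same scale of spaces $\L^\theta(\pi)$ with the Hölder-based drop $1/q = 1/p + u/\eta$ (which is exactly the paper's map $T_u(\theta)=\eta\theta/(\eta+u\theta)$ read on reciprocals), the same chain $\cB^{\lfloor m\rfloor+1}=\L^s(\pi)\subset\cdots\subset\cB^0=\L^r(\pi)$ feeding into the analogue of Lemma~\ref{derivation}, the same role for the conditions on $r'$ and $s$, and the same contour-integral treatment of part (ii) under the nonlattice hypothesis. The argument is correct and is essentially the paper's own.
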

\begin{proof}
We are going to apply the procedure of the previous subsection to some suitable spaces chosen in the family $\{\cB_\theta := \L^\theta(\pi),\ \theta\in(1,+\infty)\}$. First the conclusions of Lemma~\ref{cK-geo} are true w.r.t.~these spaces, see \cite{rosen} and \cite[Prop.~4.1]{fl}. Second, from the nonlattice condition, Lemma~\ref{cS-geo} is valid , see \cite[Prop.~5.4, Sec.~5.3]{fl}. Now let us define for any fixed $u>0$: $T_u(\theta) := \eta\theta/(\eta+u\theta)$. Then, by using (\ref{xi-unif-proof}) and an easy extension of \cite[Lem.~7.4]{fl}, we can prove that we have for any $\theta\in I := [r,s]$ and $\tau'\in(0,1)$: \\[0.15cm]
\indent $\forall j=1,...,\lfloor m\rfloor$:  $\ T_1^j(\theta)\in I$\ \ $\Rightarrow$\ \ $Q(\cdot)\in \cC_b^j\big(B,\cL(\L^{\theta},\L^{T_1^j(\theta)})\big)$ \\[0.12cm]
\indent $\forall j=0,...,\lfloor m\rfloor$:  $\ T_{\tau'} T_1^j(\theta)\in I$\ \ $\Rightarrow$\ \ $Q(\cdot)\in \cC_b^{j+\tau'}\big(B,\cL(\L^{\theta},\L^{T_{\tau'} T_1^j(\theta)})\big)$, 

\noindent where $B=B(0,R)$ (for any $R>0$). Next, from $r<\frac{\eta s}{\eta+ms}$, one can fix $\tau'\in(\tau,1)$ such that 
$$r \leq  T_{\tau'}\, T_1^{\lfloor m\rfloor}(s) = \frac{\eta s}{\eta + (\lfloor m\rfloor +\tau')s}.$$
By using the spaces $\cB^{\lfloor m\rfloor+1}:= \L^s(\pi)$, $\, \cB^0 = \L^r(\pi)$ and 
$$\forall j=1,\ldots,\lfloor m\rfloor,\ \ \ \ \cB^{j} := \L^{T_{1}^{\lfloor m\rfloor+1-j}(s)}(\pi),$$
one can prove as in Subsection~\ref{sub_sec_v_geo} that the conclusions of Lemma~\ref{derivation} are fulfilled for any probability measure $\nu$ on $E$ defining a continuous linear form on $\L^r(\pi)$ and for any $g\in\L^s(\pi)$. Then the two conclusions of Proposition~\ref{prop-Rm-Lp} can be proved as in Subsection~\ref{sub_sec_v_geo}.  
\end{proof}

\noindent To complete the proof of Corollary~\ref{unif-ren-gene}, notice that, if (\ref{xi-unif-proof}) holds with $\eta>2$, then the conclusions of Remark~\ref{rem-deri-hess} are fulfilled (here the condition on $\mu$ is that of Proposition~\ref{prop-Rm-Lp}). 
\subsection{Applications to iterative  Lipschitz models} \label{sub_sec_ite}
Here $(E,d)$ is a non-compact metric space in which every 
closed ball is compact, and it is endowed with its Borel $\sigma$-field $\cE$. Let $(G,\cG)$ be a measurable space, 
let $(\varepsilon_n)_{n\geq 1}$ be a  i.i.d.~sequence of random variables taking values in 
$G$, and let $F : E\times G\r E$ be a measurable function. 
Given an initial $E$-valued r.v.~$X_0$ independent of $(\varepsilon_n)_{n\geq1}$, the random iterative  model associated to 
$(\varepsilon_n)_{n\geq 1}$, $F$ and $X_0$ is defined by  (see \cite{duf}) 
$$X_n = F(X_{n-1},\varepsilon_n),\ \ n\geq 1.$$
Let us consider the two following random variables which are classical in these models \cite{duf}~: 
$$\cC : = \sup\bigg\{\frac{d\big(F(x,\varepsilon_1),F(y,\varepsilon_1)\big)}{d(x,y)},\ x,y\in E,\ x\neq y\bigg\}\ \ \ \mbox{and}\ \ \ \ 
\cM  = 1 + \cC + d\big(F(x_0,\varepsilon_1),x_0\big)$$
where $x_0$ is some fixed point in $E$. It is well-known that, if $\cC<1$ almost surely and if 
\begin{equation} \label{moment-ite}
\exists s\geq0,\ \exists \delta_0>0,\ \ \ \E[\, \cM^{(s+1)m_d+\delta_0}\, ] < \infty, 
\end{equation} 
then $(X_n)_{n\geq 0}$ possesses a stationary distribution $\pi$ such that $\pi\big(d(\cdot,x_0)^{(s+1)m_d + \delta_0}\big) < \infty$. 
\begin{cor} \label{ite-ren} 
Let us assume that $\cC<1$ almost surely, that (\ref{moment-ite}) holds, that the initial distribution $\mu$ is such that $\mu\big(d(\cdot,x_0)^{(s+1)m_d + \delta_0}\big) < \infty$, and finally that $\xi : E\r\R^d$ ($d\geq3$) is a $\pi$-centered nonlattice function satisfying the following condition: 
\begin{equation} \label{xi-iterative} 
\exists S\geq 0,\ \forall (x,y)\in E\times E,\ \ \|\xi(x)-\xi(y)\| \leq S\, d(x,y)\, \big[1+d(x,x_0)+d(y,x_0)\big]^s,
\end{equation} 
where $s$ is the real number in (\ref{moment-ite}). Then we have (\ref{ren-intro}) with $\Sigma$ defined by (\ref{sigma}). 
\end{cor}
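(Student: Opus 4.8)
\noindent{\it Proof of Corollary~\ref{ite-ren} (plan).}

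The plan is to follow verbatim the scheme of Subsections~\ref{sub_sec_v_geo} and \ref{sub_sec_L2}: apply Theorem~\ref{ren-theo} by checking Hypothesis~$\cR(m)$ for $f=1_E$, with $m=d-2$ if $d\geq5$ and $m\in(2,2+\delta_0)$ if $d=3,4$, so that $\lfloor m\rfloor=m_d$ and $m=\max\{d-2,2+\varepsilon\}$ for some $\varepsilon>0$. Only the Banach spaces change: instead of the spaces $\cB_\theta$ or $\L^\theta(\pi)$ of the two previous subsections, I would work with the scale of weighted Lipschitz spaces classically attached to the iterative model. Once Hypothesis~$\cR(m)$ is established with $L(0)=\pi(1_E)=1$, together with $\nabla\lambda(0)=0$ and $\Sigma$ positive definite, Theorem~\ref{ren-theo} gives (\ref{ren-rate-theo}) and Corollary~\ref{ren-coro} upgrades the test functions to $\cC_c(\R^d)$, which is exactly (\ref{ren-intro}).

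For $\alpha\in(0,1]$ and a growth exponent $\gamma\geq0$, set $w_\gamma(x):=(1+d(x,x_0))^{\gamma}$ and let $\cB_{\alpha,\gamma}$ be the space of functions $f:E\r\C$ with
$$\|f\|_{\alpha,\gamma}\ :=\ \sup_{x\in E}\frac{|f(x)|}{w_\gamma(x)}\ +\ \sup_{x\neq y}\frac{|f(x)-f(y)|}{d(x,y)^{\alpha}\,[1+d(x,x_0)+d(y,x_0)]^{\gamma}}\ <\ \infty.$$
Under $\cC<1$ a.s.\ and (\ref{moment-ite}), these spaces carry the strong ergodicity (\ref{K1}) and, through the Keller--Liverani theorem \cite{keli}, the uniform resolvent bounds of Lemma~\ref{cK-geo}; the nonlattice assumption yields the spectral-radius estimate of Lemma~\ref{cS-geo} uniformly on each $K_{r,r'}$. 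All these facts are available for iterative Lipschitz models in \cite{duf,fl}, so I would simply quote them.

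The crux is the analogue of Lemma~\ref{deri-geo}: each derivative kernel $Q_{(p_1,\ldots,p_k)}(\cdot)$ of (\ref{deri-partial-kernel}) must act continuously from $\cB_{\alpha,\gamma}$ into a strictly larger space, with the requisite Hölder regularity in $t$, and the cost of the $\lfloor m\rfloor=m_d$ derivatives must fit inside the moment budget. This is where (\ref{xi-iterative}) is used: taking $y=x_0$ in (\ref{xi-iterative}) shows that each component $\xi_{j}$ grows like $(1+d(\cdot,x_0))^{s+1}$ and is Lipschitz with weight of degree $s$, whence, via the splitting $|\xi_{j}(x)f(x)-\xi_{j}(y)f(y)|\leq |\xi_{j}(x)|\,|f(x)-f(y)|+|f(y)|\,|\xi_{j}(x)-\xi_{j}(y)|$, multiplication by $\xi_{j}$ sends $\cB_{\alpha,\gamma}$ into $\cB_{\alpha,\gamma+s+1}$. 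Iterating $k\le m_d$ times raises the growth index by $k(s+1)$, so that the top derivative lands in a space of growth $m_d(s+1)$; continuity of $\pi$ and of $\mu$ on the largest space requires $\pi(w_{m_d(s+1)})<\infty$ and $\mu(w_{m_d(s+1)})<\infty$, while the extra $\delta_0$ furnishes both the strict gap needed to run the perturbation argument of \cite{keli,seb-08} and the room for the fractional exponent $\tau=m-\lfloor m\rfloor$. This is precisely the content of $\E[\cM^{(s+1)m_d+\delta_0}]<\infty$ in (\ref{moment-ite}) and of the hypotheses $\pi(d(\cdot,x_0)^{(s+1)m_d+\delta_0})<\infty$ and $\mu(d(\cdot,x_0)^{(s+1)m_d+\delta_0})<\infty$. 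I expect this bookkeeping --- tracking the growth index through the scale so that exactly $m_d$ derivatives remain affordable while a strict gap survives --- to be the main obstacle; the Hölder estimates underlying it are otherwise routine.

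With these ingredients the conclusion is formal and parallels Propositions~\ref{prop-Rm(i)}--\ref{prop-Rm(ii)}. The decomposition (\ref{Q_l_L_N}) and the line integrals (\ref{line-integral}) give, through the analogue of Lemma~\ref{derivation}, the required regularity of $L(\cdot)=L_{\mu,1_E}(\cdot)$, of $\lambda(\cdot)$ via (\ref{lamda}), and of $\sum_{n\geq1}R_n(\cdot)$ via (\ref{def-cR(t)}), hence Hypothesis~$\cR(m)$-(i) with $L(0)=\pi(1_E)=1$; Lemma~\ref{cS-geo} and (\ref{CF}) give Hypothesis~$\cR(m)$-(ii). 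Finally, since $\eta:=(s+1)m_d+\delta_0>2$, the conclusions of Remark~\ref{rem-deri-hess} hold by Proposition~\ref{deri-hess} together with \cite[Sect.~5.2]{fl}: $\nabla\lambda(0)=0$ because $\xi$ is $\pi$-centered, $\Sigma$ is the matrix (\ref{sigma}), and it is positive definite because $\xi$ is nonlattice. Theorem~\ref{ren-theo} then yields (\ref{ren-rate-theo}) with $L(0)=1$, and Corollary~\ref{ren-coro} gives (\ref{ren-intro}), as desired.
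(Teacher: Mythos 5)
Your plan is correct and follows essentially the same route as the paper: the paper deduces Corollary~\ref{ite-ren} from the more general Corollary~\ref{ite-ren-gene} (take $f=1_E$), which it proves exactly as you describe --- Theorem~\ref{ren-theo} plus Hypothesis~$\cR(m)$ verified by the weak spectral method on a scale of weighted H\"older spaces, with a derivative lemma (Lemma~\ref{deri-ite}) raising the growth index by $s+1$ per derivative and Remark~\ref{rem-deri-hess} supplying $\nabla\lambda(0)=0$ and the positive definiteness of $\Sigma$. The only substantive detail you leave implicit is the mechanism that closes the moment budget: the paper works with the two-index spaces $\cB_{\alpha,\vartheta,\gamma}$ of \cite{fl} (for which the quoted ergodicity and Keller--Liverani estimates are actually stated) and chooses the H\"older exponent small, $\alpha\le\delta_0/(4(s+1))$, so that after $\lfloor m\rfloor$ derivatives the required moment $\E[\cM^{\alpha(\gamma_m+\vartheta)}]$ is exactly covered by $\E[\cM^{(s+1)m_d+\delta_0}]<\infty$.
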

Notice that (\ref{xi-iterative}) corresponds to the general weighted-Lipschitz condition introduced in \cite{duf}. Some weaker assumptions on $\cC$ are presented at the end of this subsection, as well as some (a priori) less restrictive condition than the nonlattice assumption.   

\noindent {\it Example: the linear autoregressive models in $\R^3$.}  As an illustration, let us consider in $E=\R^3$, equipped with some norm $\|\cdot\|$, the autoregressive model $X_n = A X_{n-1} +\varepsilon_n$,  
where $X_0,\varepsilon_1,\varepsilon_2,\ldots$ are independent $\R^3$-valued random variables, and $A$ is a contractive matrix of order 3 ($\sup_{\|x\|\leq1} \|Ax\|<1$). Then, taking the distance $d(x,y) = \|x-y\|$, $(X_n)_n$ is an iterative Lipschitz model and we have $\cC<1$. 
Let us consider the centered random walk $S_n = X_1+\ldots+X_n-n\E_\pi[X_0]$ 
associated to the functional $\xi(x) := x-\E_\pi[X_0]$. Note that $\xi$ satisfies (\ref{xi-iterative}) with $s=0$. Assume that $\xi$ is nonlattice and that we have 
\begin{equation} \label{moment-auto}
\exists \delta_0>0,\ \ \ \E\big[\, \|X_0\|^{2 + \delta_0} + \|\varepsilon_1\|^{2+\delta_0}\big] <\infty.
\end{equation} 
Then, from Corollary~\ref{ite-ren}, the above random walk $S_n$ satisfies (\ref{ren-intro}). 
This statement extends the 3-dimensional renewal theorem of the i.i.d.~case (obtained here in the special case $A=0$) under the same moment condition up to $\delta_0>0$. Also notice that, except the moment condition on $\varepsilon_1$ in (\ref{moment-auto}), the previous result does not require any special assumption on the law of $\varepsilon_1$. Anyway, it remains true if $X_n = A_n X_{n-1} +\varepsilon_n$ where $(A_n)_{n\geq1}$ is a sequence of r.v.~taking values in the set of matrices of order 3, such that $\|A_1\|<1$ a.s. and $(A_n,\varepsilon_n)_{n\geq1}$ is i.i.d.~and independent of $X_0$. 

\noindent Corollary~\ref{ite-ren} follows from the more general Corollary~\ref{ite-ren-gene} below, that will be proved by applying the operator-type method to the weighted H\"older-type spaces $\cB_{\alpha,\beta,\gamma}$ defined as follows. For $x\in E$, set $p(x) = 1+ \, d(x,x_0)$, 
and given any $0<\alpha\leq 1$ and $0<\beta\leq\gamma$, define for $(x,y)\in E^2$~: 
$$\Delta_{\alpha,\beta,\gamma}(x,y) = p(x)^{\alpha\gamma}\, p(y)^{\alpha\beta} + p(x)^{\alpha\beta}\,   p(y)^{\alpha\gamma}.$$
The space $\cB_{\alpha,\beta,\gamma}$ is by definition composed of the $\C$-valued functions $f$ on $E$ such that
$$m_{\alpha,\beta,\gamma}(f) =  
\sup\bigg\{\frac{|f(x)-f(y)|}{d(x,y)^\alpha\, \Delta_{\alpha,\beta,\gamma}(x,y)},\ x,y\in E,\  
x\neq y\bigg\}\, <\, \infty.$$
Set  $\displaystyle \ |f|_{\alpha,\gamma}  = \sup_{x\in E}\  
\frac{|f(x)|}{p(x)^{\alpha(\gamma+1)}}$ and
$\|f\|_{\alpha,\beta,\gamma} = m_{\alpha,\beta,\gamma}(f) + |f|_{\alpha,\gamma}$. Then $(\cB_{\alpha,\beta,\gamma},\|\cdot\|_{\alpha,\beta,\gamma})$  is a Banach space. 
\begin{cor} \label{ite-ren-gene} Let us assume that the assumptions of Corollary~\ref{ite-ren} hold, and let us fix any real number $\alpha$ such that $0<\alpha\leq\min\{1,\frac{\delta_0}{4(s+1)}\}$. Then we have (\ref{ren-rate-theo}), with $L(0) = \pi(f)$ and $\Sigma$ defined by (\ref{sigma}), for each nonnegative function  $f\in\cB_{\alpha,\vartheta,\vartheta}$, where $\vartheta$ is some real number such that $\vartheta > s+1$ (the condition on $\vartheta$ is specified below in function of $\alpha$ and $\delta_0$). 
\end{cor}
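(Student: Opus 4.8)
The plan is to reduce Corollary~\ref{ite-ren-gene} to Theorem~\ref{ren-theo} by verifying Hypothesis~$\cR(m)$ for the iterative Lipschitz model, following the same operator-type scheme developed for the $v$-geometric and $\rho$-mixing cases in Subsections~\ref{sub_sec_v_geo}--\ref{sub_sec_L2}. The role of the weighted supremum spaces $\cB_\theta$ (resp.\ the Lebesgue scale $\L^\theta(\pi)$) is now played by the weighted H\"older-type family $\{\cB_{\alpha,\beta,\gamma}\}$, and the whole difficulty is to reproduce the three ingredients that fed the abstract machinery: strong ergodicity with a resolvent bound (the analogue of Lemma~\ref{cK-geo}), the spectral gap away from $t=0$ under the nonlattice condition (the analogue of Lemma~\ref{cS-geo}), and the $\cC_b^m$ regularity of $Q(\cdot)$ as an operator between two spaces of the scale with a controlled ``gap'' (the analogue of Lemma~\ref{deri-geo}).

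First I would fix the parameters: choose $m$ as in the earlier corollaries ($m\in(2,2+\delta_0)$ for $d=3,4$ and $m=d-2$ for $d\geq5$), set $\eta$ in terms of the moment order $(s+1)m_d+\delta_0$, and organize the H\"older exponents $\beta,\gamma$ so that $Q(\cdot)$ gains regularity when passing from $\cB_{\alpha,\beta,\gamma}$ to a larger space $\cB_{\alpha,\beta',\gamma'}$ in the family. The key mechanism is that each formal $t$-derivative of the Fourier kernel~(\ref{fourier-kernels}) multiplies the integrand by a factor $\xi_{p_s}(y)$ as in~(\ref{deri-partial-kernel}); by the weighted-Lipschitz bound~(\ref{xi-iterative}) this factor grows like $p(\cdot)^{s+1}$, which is absorbed by enlarging the weight exponents at the cost of the moment~(\ref{moment-ite}). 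The smallness condition $\alpha\leq\min\{1,\tfrac{\delta_0}{4(s+1)}\}$ and $\vartheta>s+1$ are precisely what make the arithmetic of exponents close up, so that $\lfloor m\rfloor+1$ nested spaces $\cB^{\lfloor m\rfloor+1}\subset\cdots\subset\cB^0$ can be produced with $Q(\cdot)\in\cC_b^m$ between the endpoints. With these three lemmas in hand, the representation~(\ref{Q_l_L_N})--(\ref{line-integral}) via the Keller-Liverani theorem gives $\lambda(t)$, $\Pi(t)$, $N(t)$, and the resolvent-regularity statement analogous to Lemma~\ref{derivation} (invoking \cite[Th.~3.3]{seb-08}) then yields Hypothesis~$\cR(m)$-(i) with $L(0)=\pi(f)$, while Lemma~\ref{cS-geo}'s analogue yields Hypothesis~$\cR(m)$-(ii). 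The centering, the covariance formula~(\ref{sigma}), and positive-definiteness of $\Sigma$ follow from Remark~\ref{rem-deri-hess} once $\eta>2$ is checked, and Theorem~\ref{ren-theo} concludes.

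The main obstacle I expect is establishing the operator-moment lemma, the analogue of Lemma~\ref{deri-geo}, on the two-parameter H\"older scale. Unlike the supremum or $\L^p$ settings where the weight bookkeeping is a single exponent, here one must control the H\"older seminorm $m_{\alpha,\beta,\gamma}$ of the differentiated kernels, which requires estimating differences $\big|(Q_{(p_1,\ldots,p_k)}(t)f)(x)-(Q_{(p_1,\ldots,p_k)}(t)f)(y)\big|$ and splitting them into a contribution from the H\"older oscillation of $f$ along the random map $F(\cdot,\varepsilon_1)$ (controlled by $\cC<1$ and its moments) and a contribution from the oscillation of the factor $\prod_s\xi_{p_s}(y)\,e^{i\langle t,\xi(y)\rangle}$ (controlled by~(\ref{xi-iterative})). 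Managing simultaneously the two weight exponents $\beta\leq\gamma$ so that both $m_{\alpha,\cdot,\cdot}$ and $|\cdot|_{\alpha,\gamma}$ stay finite, while still leaving a strict gap to drive the Keller-Liverani argument, is the delicate point; this is exactly where the constraints on $\alpha$ and $\vartheta$ are spent, and I would expect to borrow the corresponding technical estimates from \cite{fl,seb-08} rather than recompute them. Once that lemma is granted, the remaining steps are formally identical to those of Subsection~\ref{sub_sec_v_geo}.
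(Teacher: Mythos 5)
Your proposal follows essentially the same route as the paper: reduce to Theorem~\ref{ren-theo} by verifying Hypothesis~$\cR(m)$ (in fact $\cR(m+\delta)$, $\delta\in(0,\delta')$, which is Proposition~\ref{prop-Rm-ite}) via the three ingredients you name --- the analogues of Lemmas~\ref{cK-geo}, \ref{cS-geo} and \ref{deri-geo} on the scale $\cB_{\alpha,\vartheta,\gamma}$, the latter being Lemma~\ref{deri-ite} borrowed from \cite{fl} --- followed by the nested-space/Keller--Liverani argument of Lemma~\ref{derivation} and Remark~\ref{rem-deri-hess}, with the constraints on $\alpha$ and $\vartheta$ spent exactly where you indicate. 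The proposal is correct and matches the paper's proof.
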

\noindent Corollary~\ref{ite-ren-gene} is a consequence of Theorem~\ref{ren-theo} and of the next Proposition~\ref{prop-Rm-ite} which, given $m\in\N$, gives Hypothesis~$\cR(m+\delta)$ for some $\delta>0$ (this is convenient for our purpose).  

\noindent We assume below that $\cC<1$ a.s.~and that we have: 
\begin{equation} \label{moment-ite-proof}
\exists m\in\N,\ \exists s\geq0,\ \exists \delta_0>0,\ \ \ \E[\, \cM^{(s+1)m+\delta_0}\, ] < \infty, 
\end{equation} 
and that  $\xi : E\r\R^d$ is a measurable function satisfying (\ref{xi-iterative}) with $s$ given in (\ref{moment-ite-proof}). 

\noindent Let $k\in\N$, $\, \alpha\in(0,1]$, $\, \delta'\in(0,\alpha)$, and let $\beta$ be such that $s+1\leq\beta\leq\gamma$. Set $B=B(0,R)$ (for any $R>0$). 
\begin{lem} \label{deri-ite}  
Let us assume that $\gamma' \geq \gamma + \frac{(s+1)(k+\delta')}{\alpha}$ and $\E[\, \cM^{\alpha(\gamma' + \beta)}\, ] < \infty$. Then we have   $Q(\cdot)\in\cC_b^{k+\delta'}\big(B,\cL(\cB_{\alpha,\beta,\gamma},\cB_{\alpha,\beta,\gamma'})\big)$.   
\end{lem}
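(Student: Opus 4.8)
The plan is to prove Lemma~\ref{deri-ite} by the same differentiation-under-the-kernel strategy used for the $v$-geometric case (Lemma~\ref{deri-geo}), but now adapted to the weighted H\"older spaces $\cB_{\alpha,\beta,\gamma}$. Recall from (\ref{deri-partial-kernel}) that the formal $k$-th partial derivatives of $Q(t)$ are the kernels $Q_{(p_1,\ldots,p_k)}(t)(x,dy)=i^k\big(\prod_{s=1}^k\xi_{p_s}(y)\big)e^{i\langle t,\xi(y)\rangle}\,Q(x,dy)$. So the core of the argument is to show that these kernels, and their incremental differences in $t$, act boundedly from $\cB_{\alpha,\beta,\gamma}$ into the larger space $\cB_{\alpha,\beta,\gamma'}$, with the ``loss'' $\gamma'-\gamma\geq(s+1)(k+\delta')/\alpha$ exactly accounting for the polynomial growth introduced by the factors $\xi_{p_s}(y)$ (each controlled via (\ref{xi-iterative}) by $d(\cdot,x_0)^{s}$, i.e.~by $p(\cdot)^s$) together with the H\"older increment in $t$.

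First I would reduce the $\cC_b^{k+\delta'}$ statement to two separate estimates: (a) boundedness of each kernel $Q_{(p_1,\ldots,p_k)}(t)$ in $\cL(\cB_{\alpha,\beta,\gamma},\cB_{\alpha,\beta,\gamma'})$ uniformly in $t\in B$, which handles the partial derivatives up to order $k$; and (b) a $\delta'$-H\"older estimate in $t$ for the top-order kernel, using the elementary bound $|e^{i\langle t,\xi(y)\rangle}-e^{i\langle t',\xi(y)\rangle}|\leq \|t-t'\|^{\delta'}\,\|\xi(y)\|^{\delta'}$ (interpolating the trivial bound $2$ with the Lipschitz bound $\|t-t'\|\,\|\xi(y)\|$). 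The passage from these kernel estimates to genuine Fr\'echet differentiability of $t\mapsto Q(t)$ with derivatives given by the $Q_{(p_1,\ldots,p_k)}$ is then routine and is exactly the mechanism invoked in the proof of Lemma~\ref{deri-geo}; I would cite \cite[Lem.~10.4-5]{fl} (or its Lipschitz-model analogue) for this formal step rather than redo it.

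The heart of the matter, and what I expect to be the main obstacle, is estimate (a): bounding the seminorm $m_{\alpha,\beta,\gamma'}\big(Q_{(p_1,\ldots,p_k)}(t)g\big)$ and the weighted sup-norm $|Q_{(p_1,\ldots,p_k)}(t)g|_{\alpha,\gamma'}$ by $C\,\|g\|_{\alpha,\beta,\gamma}$. The sup-norm part is straightforward: for fixed $x$ one writes $\big(Q_{(p_1,\ldots,p_k)}(t)g\big)(x)=\E_x\big[i^k\prod_s\xi_{p_s}(X_1)\,e^{i\langle t,\xi(X_1)\rangle}g(X_1)\big]$, bounds $|g(X_1)|\leq\|g\|_{\alpha,\beta,\gamma}\,p(X_1)^{\alpha(\gamma+1)}$ and $\prod_s|\xi_{p_s}(X_1)|\leq C\,p(X_1)^{(s+1)k}$ (via (\ref{xi-iterative}) with $\xi(x_0)$ absorbed), and closes using the moment hypothesis $\E[\cM^{\alpha(\gamma'+\beta)}\,]<\infty$ together with the standard contraction bound $p(X_1)\leq\cC\,p(x)+(\cM-1)$ that propagates $p(x_0)$-moments through $F$. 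The genuinely delicate estimate is the H\"older seminorm: to control $|(Q_{(p_1,\ldots,p_k)}(t)g)(x)-(Q_{(p_1,\ldots,p_k)}(t)g)(x')|$ one must couple the two chains started at $x$ and $x'$ through the \emph{same} innovation $\varepsilon_1$, so that $d\big(F(x,\varepsilon_1),F(x',\varepsilon_1)\big)\leq\cC\,d(x,x')$, and then split the difference of the integrands into three pieces --- the increment of $g$, the increment of the product $\prod_s\xi_{p_s}$, and the increment of $e^{i\langle t,\xi\rangle}$ --- each of which produces a factor $d(x,x')^{\alpha}$ times an extra power of $p$, the accumulated weight being precisely $p(x)^{\alpha\gamma'}p(x')^{\alpha\beta}+p(x)^{\alpha\beta}p(x')^{\alpha\gamma'}$ after invoking $\gamma'\geq\gamma+(s+1)(k+\delta')/\alpha$. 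Here the bookkeeping of how many powers of $p$ each factor contributes, and verifying that the total stays within the budget $\gamma'-\gamma$ and within the moment $\E[\cM^{\alpha(\gamma'+\beta)}\,]<\infty$, is the only real work; everything else is mechanical and parallels \cite{fl}.
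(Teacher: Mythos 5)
Your proposal is correct and follows essentially the same route as the paper, which for Lemma~\ref{deri-ite} simply defers to the arguments of \cite[Lem.~B.4-4']{fl}: those arguments are precisely the kernel-derivative bounds $Q_{(p_1,\ldots,p_k)}(t)\in\cL(\cB_{\alpha,\beta,\gamma},\cB_{\alpha,\beta,\gamma'})$, the interpolation bound $|e^{i\langle t,\xi(y)\rangle}-e^{i\langle t',\xi(y)\rangle}|\leq 2^{1-\delta'}\|t-t'\|^{\delta'}\|\xi(y)\|^{\delta'}$, and the same-innovation coupling $d\big(F(x,\varepsilon_1),F(x',\varepsilon_1)\big)\leq\cC\,d(x,x')$ for the weighted H\"older seminorm, with the weight budget $\gamma'-\gamma\geq(s+1)(k+\delta')/\alpha$ and the moment $\E[\cM^{\alpha(\gamma'+\beta)}]<\infty$ used exactly as you describe. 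Your bookkeeping of the powers of $p$ checks out, so no gap.
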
 
Lemma~\ref{deri-ite} can be proved by using the arguments of \cite[Lem.~B.4-4']{fl}. Now, let us define for any $u>0$: $T_u(\gamma) := \gamma + (s+1)u/\alpha$. Let $\alpha$ be fixed such that  $0<\alpha\leq\min\{1,\frac{\delta_0}{4(s+1)}\}$. Set $\vartheta : = s+1+\delta'$ for some $\delta'\in(0,\alpha)$ (specified below), and define: \\[0.14cm]
\indent $\ \ \ \ \ \ \ \ \ \ \ \ \gamma_m := T_{\delta'}T_{1+\delta'}^{m}(\vartheta) = s+1+\delta' +  (s+1)\big(m(1+\delta')+\delta'\big)/\alpha$. \\[0.14cm]
We have $\alpha(\gamma_m+\vartheta) = (s+1)m + 2\alpha(s+1) + 2\alpha\delta' + \delta'(s+1)(m + 1) $. Now, using $\alpha\leq \frac{\delta_0}{4(s+1)}$, we can choose $\delta'\in(0,\alpha)$ sufficiently small such that $\alpha(\gamma_m+\vartheta) \leq (s+1)m + \delta_0$. 
\begin{pro} \label{prop-Rm-ite}
Let us assume that we have $\cC<1$ a.s.~and (\ref{moment-ite-proof}), that $\xi$ satisfies (\ref{xi-iterative}) with $s$ given in (\ref{moment-ite-proof}), that $\mu\big(d(\cdot,x_0)^{(s+1)m + \delta_0}\big) < \infty$, and finally that $f\in\cB_{\alpha,\vartheta,\vartheta}$. Then, for any $\delta\in(0,\delta')$, Hypothesis~$\cR\big(m+\delta\big)$-(i) holds with $L(0) = \pi(f)$. If in addition $\xi$ is nonlatice, then Hypothesis~$\cR\big(m+\delta\big)$-(ii) is fulfilled. 
\end{pro}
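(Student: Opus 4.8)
The plan is to reproduce, in the present two-parameter setting, the operator-theoretic scheme carried out for the $v$-geometrically ergodic case in Subsection~\ref{sub_sec_v_geo}, now working inside the scale $\{\cB_{\alpha,\vartheta,\gamma} : \gamma\geq\vartheta\}$ in which only the growth exponent $\gamma$ varies. First I would record the two prerequisites playing the role of Lemma~\ref{cK-geo}: that the strong ergodicity property (\ref{K1}) holds on each space $\cB_{\alpha,\beta,\gamma}$, and that, as a consequence of the Keller--Liverani theorem \cite{keli}, the resolvents $(z-Q(t))^{-1}$ are defined and uniformly bounded in $\cL(\cB_{\alpha,\beta,\gamma})$ for $z\in\cD_\kappa$ and $t$ in a small ball $B(0,R_\kappa)$. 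Both facts are available for the iterative Lipschitz model from \cite{duf} and \cite{fl}, the hypotheses $\cC<1$ a.s.~and (\ref{moment-ite-proof}) being exactly what is needed to guarantee the quasi-compactness of $Q$ on these spaces.

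With these in hand, for $t\in B(0,R_\kappa)$ I would write the spectral decomposition $Q(t)^n=\lambda(t)^n\Pi(t)+N(t)^n$ as in (\ref{Q_l_L_N}), with $\Pi(t)$ and $N(t)^n$ given by the line integrals (\ref{line-integral}), and set $L(t):=\mu(\Pi(t)f)$ and $R_n(t):=\mu(N(t)^nf)$ exactly as in (\ref{proof-L})--(\ref{proof-Rn}); these are well defined because, as explained below, $\mu$ is continuous on the relevant space and $f\in\cB_{\alpha,\vartheta,\vartheta}$. The identity $L(0)=\mu(\Pi f)=\pi(f)$, the formula (\ref{lamda}) for $\lambda(t)$, the geometric bound $R_n(t)\leq C\kappa^n$, and the resulting closed form (\ref{def-cR(t)}) for $\sum_{n\geq1}R_n(t)$ all transfer verbatim. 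Consequently, Hypothesis~$\cR(m+\delta)$-(i) reduces, just as in the proof of Proposition~\ref{prop-Rm(i)}, to proving that the resolvent maps $t\mapsto\nu((z-Q(t))^{-1}g)$ belong to $\cC_b^{m+\delta}(B(0,R),\C)$ with $z$-uniform bounds on their derivatives, for $\nu\in\{\pi,\mu\}$ and $g\in\{1_E,f\}$.

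The core step is therefore the analog of Lemma~\ref{derivation}, whose proof rests on Lemma~\ref{deri-ite} and \cite[Th.~3.3]{seb-08}. Here I would build the decreasing chain of spaces interpolating between the small space $\cB_{\alpha,\vartheta,\vartheta}$ and the large space $\cB_{\alpha,\vartheta,\gamma_m}$: set $\cB^{m+1}:=\cB_{\alpha,\vartheta,\vartheta}$, $\cB^0:=\cB_{\alpha,\vartheta,\gamma_m}$, and $\cB^{j}:=\cB_{\alpha,\vartheta,T_{1+\delta'}^{m+1-j}(\vartheta)}$ for $j=1,\ldots,m$, so that $\cB^{m+1}\subset\cdots\subset\cB^0$ and each consecutive inclusion is matched by one application of Lemma~\ref{deri-ite} ($k=1$, exponent $1+\delta'$, for the $m$ integer derivatives, and $k=0$, exponent $\delta'$, for the final H\"older step producing $\gamma_m=T_{\delta'}T_{1+\delta'}^{m}(\vartheta)$). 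The single moment condition (\ref{moment-ite-proof}) is precisely calibrated for this: the choice $\alpha\leq\frac{\delta_0}{4(s+1)}$ together with a small enough $\delta'\in(0,\alpha)$ gives $\alpha(\gamma_m+\vartheta)\leq(s+1)m+\delta_0$, which is exactly the integrability $\E[\cM^{\alpha(\gamma_m+\vartheta)}]<\infty$ required to apply Lemma~\ref{deri-ite} with $\gamma'=\gamma_m$, $\beta=\vartheta$, and simultaneously (since $\alpha(\gamma_m+1)\leq(s+1)m+\delta_0$) ensures that $\pi$ and $\mu$, under $\mu(d(\cdot,x_0)^{(s+1)m+\delta_0})<\infty$, define continuous linear forms on $\cB^0=\cB_{\alpha,\vartheta,\gamma_m}$. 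Feeding this chain into \cite[Th.~3.3]{seb-08} yields a Taylor expansion of order $m$ for $r_z(t)$ with a uniform remainder $O(\|t-t_0\|^{m+\delta'})$, hence $\cC_b^{m+\delta}$ regularity for every $\delta<\delta'$ (the passage from the Taylor expansion to genuine differentiability being obtained from \cite{camp} as in \cite[Rk.~3.6]{seb-08}). This establishes $\cR(m+\delta)$-(i) with $L(0)=\pi(f)$.

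For part (ii), under the additional nonlattice assumption I would invoke the analog of Lemma~\ref{cS-geo}: on each annulus $K_{r,r'}$ there is a spectral-radius bound $\sup_{t\in K_{r,r'}}\|Q(t)^n\|=O(\rho^n)$ with $\rho<1$, valid on the spaces $\cB_{\alpha,\vartheta,\gamma}$ of the chain, this being where the nonlattice condition (\ref{xi-iterative}) enters via the arguments of \cite{fl}. Then $\cE(t):=\sum_{n\geq1}\mu(Q(t)^nf)$ converges uniformly on $K_{r,r'}$, can be written as the line integral $\frac{1}{2i\pi}\oint_{\Gamma}\frac{z}{1-z}\,\mu((z-Q(t))^{-1}f)\,dz$ over a circle $\Gamma$ of radius $\rho_0\in(\rho,1)$, and its $\cC_b^{m+\delta}(K_{r,r'},\C)$ regularity follows from the same resolvent estimates as in part~(i), giving $\cR(m+\delta)$-(ii). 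The main obstacle is the bookkeeping of the third paragraph: one must check that the two-parameter chain $\cB^{m+1}\subset\cdots\subset\cB^0$ does chain up correctly under the maps $T_u$ and that the accumulated exponent $\gamma_m$ stays within the single budget $(s+1)m+\delta_0$ fixed by the moment hypothesis; once this is secured, everything else is a transcription of the $v$-geometric argument.
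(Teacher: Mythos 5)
Your proposal is correct and follows essentially the same route as the paper: strong ergodicity and the Keller--Liverani resolvent bounds on the scale $\cB_{\alpha,\vartheta,\gamma}$ (the paper cites \cite[Prop.~11.2--11.4, 11.8]{fl}), the spectral decomposition with $L$, $R_n$ defined by the line integrals (\ref{proof-L})--(\ref{proof-Rn}), the chain $\cB^{\lfloor m\rfloor+1}=\cB_{\alpha,\vartheta,\vartheta}\subset\cdots\subset\cB^0=\cB_{\alpha,\vartheta,\gamma_m}$ fed into Lemma~\ref{deri-ite} and \cite[Th.~3.3]{seb-08}, and the check that $\alpha(\gamma_m+\vartheta)\leq(s+1)m+\delta_0$ keeps everything within the moment budget (\ref{moment-ite-proof}) while $\alpha(\gamma_m+1)\leq(s+1)m+\delta_0$ makes $\mu$ continuous on $\cB^0$. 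The only slip is labelling (\ref{xi-iterative}) as ``the nonlattice condition'' in the last paragraph --- it is the weighted-Lipschitz condition on $\xi$ --- but this does not affect the argument.
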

\begin{proof}
We apply the procedure of Subsection~\ref{sub_sec_v_geo} with the spaces $\cB_\gamma:=\cB_{\alpha,\vartheta,\gamma}$, $\, \gamma\in [\vartheta,\gamma_m]$, with $\alpha,\vartheta,\gamma_m$ fixed above. First, Lemma~\ref{cK-geo} is valid on each $\cB_\gamma$, see \cite[Prop.~11.2-11.4]{fl}. Second, from the nonlattice condition, Lemma~\ref{cS-geo} applies on each $\cB_\gamma$, see \cite[Prop.~11.8]{fl}. Next, observe that $\gamma_m > \vartheta + (s+1)(m+\delta')/\alpha$. Then, by using Lemma~\ref{deri-ite} and the spaces $\cB^{\lfloor m\rfloor+1}:= \cB_{\alpha,\vartheta,\vartheta}$, $\, \cB^0 = \cB_{\alpha,\vartheta,\gamma_m}$ and 
$$\forall j=1,\ldots,\lfloor m\rfloor,\ \ \ \ \cB^{j} := \cB_{\alpha,\vartheta,T_{1+\delta}^{m+1-j}(\vartheta)},$$
one can prove as in Subsection~\ref{sub_sec_v_geo} that Lemma~\ref{derivation} holds with here any $g\in\cB_{\alpha,\vartheta,\vartheta}$ and any probability measure $\nu$ on $E$ defining a continuous linear form on $\cB_{\alpha,\vartheta,\gamma_m}$. Since  $\alpha(\gamma_m+1) \leq (s+1)m + \delta_0$, the last condition holds if $\mu\big(d(\cdot,x_0)^{(s+1)m + \delta_0}\big) < \infty$. Then the  conclusions of Proposition~\ref{prop-Rm-ite} can be  established as in Subsection~\ref{sub_sec_v_geo}.  
\end{proof}
\noindent To complete the proof of Corollary~\ref{ite-ren-gene}, notice that, under the assumptions of Proposition~\ref{prop-Rm-ite} with $m\geq2$ in (\ref{moment-ite-proof}), the conclusions of Remark~\ref{rem-deri-hess} are fulfilled. 
\begin{rem}
By repeating the previous proof with a more precise use of the results of \cite{fl}, one can establish that under the following assumption (weaker than (\ref{moment-ite}) and $\cC<1$)  
$$\exists s\geq0,\ \exists \delta_0>0,\ \ \E\big[\, (1+\cC^\alpha)\cM^{(s+1)m_d + \delta_0}\, \big] < \infty,\ \ 
\E\big[\, \cC^\alpha\, \max\{\cC,1\}^{(s+1)m_d + \delta_0}\, \big] < 1,$$
the conclusion of Corollary~\ref{ite-ren-gene} remains true if $\mu\big(d(\cdot,x_0)^{(s+1)m_d + \delta_0}\big) < \infty$, and if the $\pi$-centered function $\xi$ satisfies (\ref{xi-iterative}) and the following non-arithmeticity condition (a priori weaker for iterative models than the nonlattice assumption): \\[0.12cm]
{\it There exist no $t\in\R^d$, $t\neq 0$, 
no $\lambda\in\C$, $|\lambda|=1$, no $\pi$-full $Q$-absorbing set $A\in\cE$, and finally no bounded function 
$w\in \cB_{\alpha,\vartheta,\gamma_{m_d}}$ whose modulus is nonzero constant on $A$, such that we have~: 
$\forall x\in A,\ \ e^{i\langle t,\xi(y)\rangle} w(y) = \lambda w(x)\ \ Q(x,dy)-\mbox{\it a.s}$.}
\end{rem}
\section{Conclusion} \label{conclusion}
In this paper we extend the well-known renewal theorem established by Spitzer \cite{spitzer} in dimension $d\geq3$ for centered nonlattice i.i.d.~sequences. In a first step, given a measurable space $E$ and a sequence $(X_n,S_n)_{n\geq0}$ of $E\times\R^d$-valued random variables, we present the ``tailor-made'' assumptions on $\E[f(X_n)\, e^{i\langle t, S_n\rangle}]$ under which such a renewal theorem can be proved by Fourier techniques. In a second step, considering the case when $(X_n)_{n\geq0}$ is a Markov chain, we give a general setting containing the Markov random walks, for which the spectral method may provide an efficient way to study the term $\E[f(X_n)\, e^{i\langle t, S_n\rangle}]$. This approach, already used in \cite{bab}, is improved here by appealing to the weak spectral method developed in \cite{fl}. Our main improvements concern the operator-type moment assumptions: when applied to the $v$-geometrically ergodic Markov chains, the $\rho$-mixing Markov chains and the iterative Lipschitz models, these assumptions are fulfilled 
under the (almost) expected moment condition in comparison with the i.i.d.~case. In particular, our results apply to unbounded r.v.~$S_1$, while the moment condition derived from \cite{bab} is not satisfied in general when $S_1$ is not bounded. \\
\indent The present weak spectral procedure should also enable to improve some of the results of \cite{bab,fuhlai}  concerning the non-centered Markov renewal theorem in dimension $d\geq2$. Work in these directions by the first author is in progress. Finally this method could be used to study the renewal theorem for Birkhoff sums in dynamical systems by using the so-called Perron-Frobenius operator, as already developed for other purposes in \cite{seb-08}. 
\newpage
\begin{center}
\bf APPENDIX A. Complements in the proof of Theorem~\ref{ren-theo}
\end{center}
\noindent {\bf A.0. Inequalities for the derivatives of $1/w$.} \\[0.12cm]
\noindent Let $\gamma=(\gamma_1,\ldots,\gamma_d)\in\N^d$, and set $\vert\gamma\vert=\gamma_1+\ldots+\gamma_d$ and $\gamma!=\gamma_1!\ldots\gamma_d!$. 
If $\beta\in\N^d$ is such that $\beta\leq \gamma$, namely $\beta_i\leq\gamma_i$ for each $i\in\{1,\ldots,d\}$, we set ${\gamma\choose\beta} =\frac{\gamma!}{\beta!(\gamma-\beta)!}$. \\[0.12cm]
Let $\Omega$ be an open subset of $\R^d$, let $m\in\N^*$. We denote by $\cC^m(\Omega,\,\C)$ the space of  $m$-times continuously differentiable complex-valued functions on $\Omega$. If $\gamma\in\N^d$ is such that $\vert\gamma\vert\leq m$, we denote by  $\displaystyle{\partial^{\gamma}}$ the derivative operator defined on $\cC^m(\Omega,\,\C)$ by~:  
$$\partial^{\gamma} := \frac{\partial^{\vert\gamma\vert}}{\partial x_1^{\gamma_1}\ldots \partial x_d^{\gamma_d}} = \partial_1^{\gamma_1}\ldots\partial_d^{\gamma_d}\ \ \ \mbox{where}\ \ \ \partial_j := \frac{\partial}{\partial x_j}.$$
Let us recall (Leibniz's formula) that, if $f$ and $g$ are in $\cC^m(\Omega,\,\C)$, then we have for all $\gamma\in\N^d$ such that $\vert\gamma\vert\leq m$,
$$\partial^{\gamma}(f.\,g)=\sum_{\beta\leq\gamma}{\gamma \choose\beta}\partial^{\beta}f\cdot\,\partial^{\gamma-\beta}g.$$
Let $V$ be a bounded neighborhood of 0 in $\R^d$. 

\noindent {\bf Proposition A.0.} {\it Assume that $w : \overline{V}\r\C$ is $m$-times continuously differentiable, and that there exist some constants $c>0$ and $d\geq 0$ such that we have for all $x\in V$~: 
$\vert w(x)\vert \geq c\,\|x\|^2$ and $\sum_{j=1}^d\vert(\partial_j w)(x)\vert\leq d\,\|x\|$. Then, for each $\gamma\in \N^d$ such that $\vert \gamma\vert\leq m$, there exists a constant $C_{\gamma}\geq 0$ such that }
\begin{equation} \label{deri-1-over-w}
\forall x\in V\setminus\{0\},\ \ 
\big\vert\partial^{\gamma}(\frac{1}{w})(x)\big\vert\leq\frac{C_{\gamma}}{\|x\|^{2+\vert\gamma\vert}}.
\end{equation}
\noindent \begin{proof} 
Let $\gamma=(\gamma_1,\ldots,\gamma_d)\in\N^d$ be such that $\vert\gamma\vert\leq m$, set $M_{\gamma}=\max_{x\in \overline{V}}\vert\partial^{\gamma}w(x)\vert$, and let $K>0$ be such that $\|x\|\leq K$ for all $x\in V$. To prove (\ref{deri-1-over-w}), we are going to use an induction on $l=\vert\gamma\vert\in\{0,\ldots,m\}$. First, (\ref{deri-1-over-w}) is obvious if $l=0$, and if  $l=1$, then (\ref{deri-1-over-w}) follows from the following equalities 
$$\forall j\in\{1,\ldots,d\},\ \forall x\in V\setminus\{0\},\ \ \ \partial_j(\frac{1}{w})(x) = -\, \frac{\partial_j w(x)}{w^2(x)}.$$
Now let $l\in\{1,\ldots,m-1\}$, suppose that (\ref{deri-1-over-w}) is fulfilled for each $\gamma\in\N^d$ such that $\vert\gamma\vert\leq l$, and consider any $\gamma\in\N^d$ such that $\vert\gamma\vert=l+1$. Since $\partial^{\gamma}(w^{-1}\,.w)=0$, the Leibniz formula gives on $V\setminus\{0\}$:
$$\partial^{\gamma}(\frac{1}{w}) = -\frac{1}{w} \sum_{\beta\leq\gamma,\, \beta\neq\gamma} {\gamma \choose\beta}\partial^{\beta}(\frac{1}{w}).\,\partial^{\gamma-\beta}(w).$$
Let $\beta\leq \gamma$ be such that $\vert \gamma\vert-\vert\beta\vert\geq 2$ and let $x\in V\setminus\{0\}$. We have : $$\big\vert\frac{1}{w(x)}\, \partial^{\beta}(\frac{1}{w})(x)\, \partial^{\gamma-\beta}w(x)\big\vert\ \leq\ \frac{C_{\beta} M_{\gamma-\beta}}{c\,\|x\|^{4+\vert\beta\vert}}\ \leq\ 
\frac{K^{\vert\gamma\vert - \vert\beta\vert -2}C_{\beta} M_{\gamma-\beta}}{c\, \|x\|^{2+\vert\gamma\vert}}.$$
Let $\beta\leq \gamma$ be such that $\vert \gamma\vert-\vert\beta\vert=1$, and let $x\in V\setminus\{0\}$. Then we have : 
$$\big\vert\frac{1}{w(x)}\, \partial^{\beta}(\frac{1}{w})(x)\, \partial^{\gamma-\beta}w(x)\big\vert\ \leq\ \frac{d\,C_{\beta}}{c\,\|x\|^{3+\vert\beta\vert}}\ =\ \frac{d\,C_{\beta}}{c\,\|x\|^{2+\vert\gamma\vert}}.$$
This yields (\ref{deri-1-over-w}) for $\vert\partial^{\gamma}(\frac{1}{w})\vert$.  
\end{proof}
\noindent{\bf Remarks.} \\
{\bf (a)} The derivative estimates (\ref{deri-u}) needed for the function $u(t) = \frac{\theta_1(t)}{1-\lambda(t)}$ in the proof of Lemma~\ref{I1(a)-I3(a)} follows from this proposition. Indeed, let us set $w(t)=1-\lambda(t)$ for $t\in V:=\lbrack-\alpha,\alpha\rbrack^d$. Since $\nabla\lambda(0) = 0$ and $\Sigma$ is positive definite, $w$ satisfies the hypotheses of  Proposition A.0. Besides, one may assume that $\sqrt{d}\,\alpha<1$, so that each  $t\in V$ satisfies $\|t\| <1$. Now let $\gamma\in\N^d$ be such that $\vert\gamma\vert\leq d-2$, and let $C>0$ and $D_{\gamma}\geq 0$ be such that we have for all $t\in V$~: $\vert\theta_1(t)\vert\leq C\|t\|$ and $\displaystyle{\vert\partial^{\gamma}\theta_1(t)\vert\leq D_{\gamma}}$. On the one hand, we have 
$$\forall t\in V\setminus\{0\},\ \ \ \big\vert\theta_1(t)\, \partial^{\gamma}(\frac{1}{w})(t)\big\vert\leq \frac{C\, C_{\gamma}}{\|t\|^{1+\vert\gamma\vert}}.$$ 
On the other hand, for $\beta\in\N^d$ such that $\beta\not=\gamma$, $\beta\leq\gamma$, we have since $\vert \beta\vert\leq \vert\gamma\vert -1$ and $\|t\| <1$, 
$$\forall t\in V\setminus\{0\},\ \ \ \big\vert\partial^{\gamma-\beta}\theta_1(t)\, \partial^{\beta}(\frac{1}{w})(t)\big\vert\ \leq\  \frac{D_{\gamma-\beta}\, C_{\beta}}{\|t\|^{2+\vert\beta\vert}}\ \leq\ \frac{D_{\gamma-\beta}\, C_{\beta}}{\|t\|^{1+\vert\gamma\vert}}.$$
So (\ref{deri-u}) follows from Leibniz's formula. 

\noindent {\bf (b)} The derivative estimates (\ref{deri-v}) (\ref{deri-v'}) on the function $v(t) = \frac{\theta_3(t)}{(1-\lambda(t))\langle \Sigma t,t\rangle}$ in the proof of Lemma~\ref{I1(a)-I3(a)} can be derived similarly. Indeed, set $s(t)=\langle\Sigma t,t\rangle$ for $t\in\lbrack-\alpha,\alpha\rbrack^d$. Then the function $s(\cdot)$ satisfies the assumptions of Proposition A.0 on $V=\lbrack-\alpha,\alpha\rbrack^d$. Let $\gamma$ $\in\N^d$ be such that $\vert\gamma\vert\leq d-2$, and let $t\in V\setminus\{0\}$. From Leibniz's formula, the partial derivative $\displaystyle{\partial^{\gamma}v(t)}$ is a sum of terms of the form (up to some binomial coefficients)  
$$\partial^{\beta}\theta_3(t)\, \partial^{\delta}(\frac{1}{w})(t)\, \partial^{\eta}(\frac{1}{s})(t)\ \ \ \mbox{with}\ \  \vert\beta\vert+\vert\delta\vert + \vert\eta\vert=\vert\gamma\vert,$$
where $w(t)=1-\lambda(t)$. By proceeding as above (consider here the three cases  $\vert \beta\vert=\,0,1,2$, and the case $\vert \beta\vert \geq3$), one can then establish (\ref{deri-v}) and (\ref{deri-v'}). \\[0.5cm]
\noindent {\bf A.1. Some inequalities for Fourier transforms.} 

\noindent Let $u : \R^d\setminus\{0\}\r\C$. We assume that there exists $\alpha>0$ such that we have $u(x) = 0$ for all $x\in\R^d\setminus[-\alpha,\alpha]^d$. 

\noindent{\bf Proposition A.1.} {\it Let $k\in\N^*$. If $u$ is $k$-times continuously differentiable on $\R^d\setminus\{0\}$ and satisfies the following conditions:  \\[0.15cm]
\noindent $(i)_k$  each partial derivative of order $\leq k-1$ of $u$ is $O(\|x\|^{-s})$ on $\R^d\setminus\{0\}\ $  for some 
$\ s<d-1$. \\[0.15cm]
\noindent $(ii)_k$ each partial derivative of order $k$ of $u$ is Lebesgue-integrable on $\R^d$, \\[0.15cm]
then we have $\displaystyle \hat u(a) = o\big(\frac{1}{\|a\|^k}\big)$ when $\|a\|\r+\infty$. }

\noindent This proposition is a consequence of the following lemma. 

\noindent{\bf Lemma A.1.} {\it If $u$ is continuously differentiable on $\R^d\setminus\{0\}$ and satisfies $(i)_1$-$(ii)_1$, then we have $\widehat{\frac{\partial u}{\partial x_j}}(a) = 
ia_j\, \hat u(a)$ for $j=1,\ldots,d$, and $\displaystyle \hat u(a) = o\big(\frac{1}{\|a\|}\big)$ when 
$\|a\|\r+\infty$. }

\noindent In fact, the second conclusion in Lemma A.1 proves Proposition A.1 in the case $k=1$, while the first conclusion of Lemma A.1 clearly allows to establish Proposition A.1 by induction. 

\noindent {\it Proof of Lemma A.1.} Below, $\langle \cdot,\cdot\rangle$ and $\|\cdot\|$ denote the scalar product and the euclidean norm in both $\R^d$ and $\R^{d-1}$. Let us write  $x=(x_1,x')\in\R\times\R^{d-1}$ and $a=(a_1,a')\in\R\times\R^{d-1}$, and define 
$$ u_1(x_1) = 
\int_{|x'|\leq \alpha}\, u(x_1,x') e^{-i\langle a',x'\rangle}\, dx',$$
so that we have 
$$\forall a\in\R^d,\ \ \ \hat u(a) = \int_{-\alpha}^\alpha u_1(x_1)\, e^{-ia_1x_1}\, dx_1.$$
Since $u$ is continuous on  $\R^d\setminus\{0\}$ and $x'\mapsto 1_{\{|x'|\leq \alpha\}}(x')\, \|x'\|^{-s}$ 
is  Lebesgue-integrable on $\R^{d-1}$ (because $s<d-1$), we deduce from  Condition $(i)_1$ and Lebesgue's theorem that 
$u_1$ is continuous on $\R$. 
Moreover, given any segment $K$ in $\R^*$, $\frac{\partial u}{\partial x_1}$ is bounded on the compact set 
$K\times [-\alpha,\alpha]^{d-1}$, and  it then follows from  Lebesgue's theorem that 
$u_1$ is continuously differentiable on $K$, with~: 
$$\forall x_1\in K,\ \ u_1'(x_1) = 
\int_{|x'|\leq \alpha}\, \frac{\partial u}{\partial x_1}(x_1,x') e^{-i\langle a',x'\rangle}\, dx'.$$ 
Now write 
$$\hat u(a) = \lim_{\varepsilon\r0}\bigg(\int_{\varepsilon}^\alpha u_1(x_1)\, e^{-ia_1x_1}\, dx_1 + 
\int_{-\alpha}^{-\varepsilon} u_1(x_1)\, e^{-ia_1x_1}\, dx_1\bigg).$$
For $a_1\neq0$, an easy integration by parts gives (notice that $u_1(\alpha)=0$ because $u(\alpha,\cdot)=0$) 
$$\int_{\varepsilon}^\alpha u_1(x_1)\, e^{-ia_1x_1}\, dx_1=  \frac{e^{-ia_1\varepsilon} u_1(\varepsilon)}{ia_1} + 
\frac{1}{ia_1}\int_{\varepsilon}^\alpha u_1'(x_1)\,  e^{-ia_1x_1}\, dx_1.$$
Using the continuity of $u_1(\cdot)$ at $0$, the above expression of $u_1'(x_1)$, and finally the fact that 
$\frac{\partial u}{\partial x_1}$ is Lebesgue-integrable on $\R^d$ by hypothesis, one gets 
$$\lim_{\varepsilon\r0}\bigg(\int_{\varepsilon}^\alpha u_1(x_1)\, e^{-ia_1x_1}\, dx_1\bigg) = 
\frac{u_1(0)}{ia_1} + \frac{1}{ia_1}\int_{[0,\alpha]\times [-\alpha,\alpha]^{d-1}}\,  \frac{\partial u}{\partial x_1}(x)  
e^{-i\langle a,x\rangle}\, dx.$$
Similarly one can prove that  
$$\lim_{\varepsilon\r0}\bigg(\int_{-\alpha}^{-\varepsilon} u_1(x_1)\, e^{-ia_1x_1}\, dx_1\bigg) = 
\frac{-u_1(0)}{ia_1} + \frac{1}{ia_1}\int_{[-\alpha,0]\times [-\alpha,\alpha]^{d-1}}\,  \frac{\partial u}{\partial x_1}(x)  
e^{-i\langle a,x\rangle}\, dx.$$
So $\hat u(a) =  \frac{1}{ia_1}\, \widehat{\frac{\partial u}{\partial x_1}}(a)$. On the same way, we have 
$\hat u(a) =  \frac{1}{ia_j}\, \widehat{\frac{\partial u}{\partial x_j}}(a)$ ($j=2,\ldots,d$). This proves the first assertion of 
Lemma A.1. This also gives $\|a\|\,|\hat u(a)| 
\leq \sum_{j=1}^d |a_j|\, |\hat u(a)| = \sum_{j=1}^d |\widehat{\frac{\partial u}{\partial x_j}}(a)|$, and since 
$\widehat{\frac{\partial u}{\partial x_j}}(a)\r0$ when $\|a\|\r+\infty$ (by $(ii)_1$), this yields the second assertion.  
\fdem
$\ $ \\
\noindent {\bf A.2. An elementary proof of (\ref{dist}).}\\[0.12cm]
\noindent  Let us recall that we set in (\ref{dist})~: $\ c=(2\pi)^{\frac{d}{2}}\, 2^{\frac{d}{2}-2}\, \Gamma(\frac{d-2}{2})$. Equality (\ref{dist}) follows from the following proposition. Let $S(\R^d)$ denote the Schwartz space.

\noindent {\bf Proposition A.2.} {\it Let $g\in S(\R^d)$. Then we have~: $\ \displaystyle \int_{\R^d} \hat{g}(u)\,\frac{1}{\|u\|^2}\,  du \, =\, c \int_{\R^d} g(v)\, \frac{1}{\|v\|^{d-2}}\,  dv$. } 

\noindent\begin{proof} First observe that 
$ \displaystyle\frac{1}{\|u\|^{2}} = \int_0^{+\infty}e^{-x\|u\|^2}\, dx$, 
so that we obtain by Fubini's theorem, Parseval's formula, and by setting $\gamma_x(u) = e^{-x\|u\|^2}$ 
$$\int_{\R^d} \hat{g}(u)\,\frac{1}{\|u\|^2}\,  du = 
\int_0^{+\infty} \int_{\R^d} \bigg(\hat{g}(u)\, e^{-x\|u\|^2}\, du\bigg)\, dx =  \int_0^{+\infty} \bigg(\int_{\R^d} g(v)\, 
\widehat{\gamma_x}(v)\, du\bigg)\, dx,$$
with $\widehat{\gamma_x}(v) = (\frac{\pi}{x})^{\frac{d}{2}}\,  e^{-\frac{\|v\|^2}{4x}}$ from a usual computation. 
Now Fubini's theorem again yields  
$$\int_0^{+\infty} \int_{\R^d} g(v)\, 
\widehat{\gamma_x}(v)\, du\, dx
= \pi^{\frac{d}{2}} \int_{\R^d} g(v)\, \bigg(\int_0^{+\infty} 
x^{-\frac{d}{2}}\,  e^{-\frac{\|v\|^2}{4x}}\, dx\bigg)\, dv.$$
Finally, we have $\int_0^{+\infty} x^{-\frac{d}{2}}\,  e^{-\frac{\|v\|^2}{4x}}\, dx = 2^{d-2}\, \Gamma(\frac{d-2}{2})\, \|v\|^{-(d-2)}$ (by setting $y=\frac{\|v\|^2}{4x}$). 
\end{proof}
$\ $ \\
\noindent {\bf A.3. Proof of (\ref{id-approchee}).}\\[0.12cm]
Recall that $*$ denotes the convolution product on $\R^d$, and that we set  $f_{d-2}(w) := \|w\|^{2-d}$ in 
(\ref{id-approchee}). Let us consider any $F\in S(\R^d)$, and set $F_\beta(x) = \beta^d\, F(\beta\, x)$ for any $\beta>0$. Then~: 

\noindent {\bf Proposition A.3.} {\it $\displaystyle\lim_{\beta\r+\infty}(F_\beta* f_{d-2})(\tilde b) = \int F(w)dw\ $ uniformly in $\tilde b$ such that $\|\tilde b\, \| = 1$. } 

\noindent\begin{proof} Let $\varepsilon>0$. Since 
$\|\tilde b\, \|=1$, there clearly exists $0<\eta=\eta(\varepsilon)<1$ independent of $\tilde b$ 
such that we have $\big|f_{d-2}(\tilde b - w) -1\big| \leq \varepsilon$ if $\|w\| < \eta$. From $\int F_\beta(w)dw = \int F(w)dw$, one gets 
\begin{eqnarray*}
\bigg |\int F(w)dw - (F_\beta* f_{d-2})(\tilde b)\bigg| &\leq&  \int |F_\beta(w)|\,  \big|1-f_{d-2}(\tilde b - w)\big|\, dw \\
&=& \int_{\|w\| < \eta} +  \int_{\|w\|\geq\eta} 
:= A_{\eta,\tilde b} + B_{\eta,\tilde b}.  
\end{eqnarray*}
We have $A_{\eta,\tilde b}\leq\varepsilon\int |F(w)|dw$. Besides we have 
$|F(\cdot)| \leq \frac{D}{(1 + \|\, \cdot\, \|)^{d+1}}$ 
for some $D>0$ (because $F\in S(\R^d)$), thus 
$B_{\eta,\tilde b} \leq B'_{\eta,\tilde b} + B''_{\eta,\tilde b}$ with $B'_{\eta,\tilde b} = \int_{\|w\|\geq\eta} |F_\beta(w)|\, dw$ and 
$$B''_{\eta,\tilde b} = 
\int_{\|w\|\geq\eta}  \bigg(\frac{D\, \beta^d}{\beta^{d+1}\|w\|^{d+1}}\bigg)\, \bigg(\frac{1}{\|\tilde b - w\|^{d-2}}\bigg)dw = 
\frac{D}{\beta}\, \int_{\|w\|\geq\eta}\frac{dw}{\|w\|^{d+1}\, \|\tilde b - w\|^{d-2}}.$$
For $\beta$ large enough, we have $B'_{\eta,\tilde b} =  \int_{\|y\|\geq \eta \beta} |F(y)| dy \leq \varepsilon$, and by decomposing 
$\int_{\|w\|\geq\eta}$ according that $\|\tilde b - w\|\leq2$ or $\|\tilde b - w\|>2$, and observing that 
$\|\tilde b - w\|>2\ \Rightarrow\ \|w\|>1$, we obtain 
$$B''_{\eta,\tilde b} \leq \frac{D}{\beta}\bigg(\frac{1}{\eta^{d+1}} \int_{\{\|\tilde b - w\|\leq2\}} 
\frac{dw}{\|\tilde b - w\|^{d-2}} +  
\frac{1}{2^{d-2}}\int_{\{\|w\|>1\}} \frac{dw}{\|w\|^{d+1}}\bigg).$$
Since the two previous integrals are finite, we have $B''_{\eta,\tilde b} \leq \varepsilon$ for $\beta$ large enough. 
\end{proof}

\newpage

\newpage

\end{document}